\documentclass[reqno, letterpaper, oneside]{article}
\usepackage{amsmath,amsfonts,amssymb,amsthm} 
\usepackage[final]{graphicx}	
\usepackage[usenames,dvipsnames]{color}

\usepackage{bbm}
\usepackage{setspace}	
\usepackage{geometry}
\usepackage{enumerate}

\usepackage{tikz}
\usepackage{extarrows}
\pgfdeclarelayer{nodelayer}
\pgfdeclarelayer{edgelayer}
\pgfsetlayers{edgelayer,nodelayer,main}
\tikzstyle{black}=[fill=black, draw=black, shape=circle, scale=0.3]
\tikzstyle{none}=[]

\usepackage{algorithm, algpseudocode, algorithmicx}
\makeatletter
\renewcommand{\fnum@algorithm}{} 
\makeatother

\geometry{
  hmargin={25mm, 25mm}, 
  vmargin={25mm, 25mm},
  headsep=10mm,
  headheight=5mm,
  footskip=10mm
}
%

\usepackage[colorlinks=false,,linktoc=allbookmarksopen=true,linktocpage,pdftex]{hyperref}
\usepackage[pdftex]{bookmark}


\definecolor{burgundy}{rgb}{0.5, 0.0, 0.13}

\def\bk{\backslash}
\def\e{\epsilon}
\def\s{\sigma}
\def\G{\Gamma}

\newcommand{\ov}[1]{{\overline{#1}}}



\newtheorem{theorem}{Theorem}
\newtheorem{lemma}[theorem]{Lemma}
\newtheorem{observation}[theorem]{Observation}

\newtheorem{claim}[theorem]{Claim}

\newtheorem{remark}[theorem]{Remark}

\newtheorem*{definition}{Definition}

\newtheorem*{convention}{Notational Convention}
\theoremstyle{definition}

\newcommand{\refT}[1]{Theorem~\ref{#1}}

\newcommand{\refL}[1]{Lemma~\ref{#1}}
\newcommand{\refR}[1]{Remark~\ref{#1}}
\newcommand{\refS}[1]{Section~\ref{#1}}

\newcommand{\refF}[1]{Figure~\ref{#1}}
\newcommand{\refApp}[1]{Appendix~\ref{#1}}

\newcommand{\refO}[1]{Observation~\ref{#1}}

\newcommand\E{\operatorname{\mathbb E{}}}
\renewcommand\Pr{\operatorname{\mathbb P{}}}
\renewcommand\P{\operatorname{\mathbb P{}}}

\newcommand\bigpar[1]{\bigl(#1\bigr)}
\newcommand\Bigpar[1]{\Bigl(#1\Bigr)}
\newcommand\biggpar[1]{\biggl(#1\biggr)}
\newcommand\lrpar[1]{\left(#1\right)}
\newcommand\bigsqpar[1]{\bigl[#1\bigr]}
\newcommand\Bigsqpar[1]{\Bigl[#1\Bigr]}

\newcommand\bigcpar[1]{\bigl\{#1\bigr\}}

\newcommand\biggcpar[1]{\biggl\{#1\biggr\}}

\newcommand\bigabs[1]{\bigl|#1\bigr|}

\def\rompar(#1){\textup(#1\textup)}    

\def\xexp(#1){e^{#1}}
\newcommand\ceil[1]{\lceil#1\rceil}

\newcommand\biggceil[1]{\biggl\lceil#1\biggr\rceil}

\newcommand\floor[1]{\lfloor#1\rfloor}

\newcommand{\eps}{\epsilon}

\def\D{\Delta}
\def\e{\epsilon}
\def\s{\sigma}

\newcommand{\cG}{\mathcal{G}}

\newcommand{\cL}{\mathcal{L}}

\newcommand{\cN}{\mathcal{N}}

\newcommand{\cP}{\mathcal{P}}
\newcommand{\cQ}{\mathcal{Q}}
\newcommand{\cS}{\mathcal{S}}
\newcommand{\cT}{\mathcal{T}}
\newcommand{\cU}{\mathcal{U}}


\newcommand{\cB}{\mathcal{B}}

\newcommand{\cA}{\mathcal{A}} 



\newcommand\dd{\,\mathrm{d}}

\newcommand{\hrho}{\hat{\rho}}
\newcommand{\Xs}{X_k}

\newcommand{\abxy}{\sigma_{ab,xy}}
\newcommand{\xyab}{\sigma_{xy,ab}}
\newcommand{\axby}{\sigma_{ax,by}}
\newcommand{\byax}{\sigma_{by,ax}}

\renewcommand{\emptyset}{\varnothing} 
\newcommand\mydots{\thinspace\makebox[1em][c]{.\hfil.\hfil.}\thinspace}

\newcommand{\indic}[1]{\mathbbm{1}_{\{{#1}\}}}


\newcommand{\Gdn}{G_{\mathbf{d_n}}}
\newcommand{\Gcdn}{G^C_{\mathbf{d_n}}}
\newcommand{\Gpdn}{G^P_{\mathbf{d_n}}}
\newcommand{\Gpodn}{G^{P,*}_{\mathbf{d_n}}}
\newcommand{\Hpdn}[1]{H^P_{\mathbf{d_n},{#1}}}
\newcommand{\Hpodn}[1]{H^{P,*}_{\mathbf{d_n},{#1}}}
\newcommand{\Hppdn}[1]{H^{P,+}_{\mathbf{d_n},{#1}}}

\newcommand{\bd}{\mathbf{d}}
\newcommand{\bdn}{\mathbf{d_n}}
\newcommand{\bdnp}{\mathbf{d'_n}}

\newcommand{\Xd}{X}
\newcommand{\bE}{\operatorname{\mathbb E{}}}
\newcommand{\bP}{\operatorname{\mathbb P{}}}
\newcommand{\osigma}{\overline{\sigma}}

\newcommand{\hsigma}{\hat{\sigma}}
\newcommand{\stubs}{\sum_{j \in [k]} jn_j}
\newcommand{\bG}{\operatorname{\mathbb G{}}}

\newcommand{\im}{\operatorname{\mathrm{Im}}}

\newcommand{\Ud}{U}
\newcommand{\ud}{u}
\newcommand{\xd}{x}

\newenvironment{romenumerate}[1][-5pt]{
\addtolength{\leftmargini}{#1}\begin{enumerate}
 }{\end{enumerate}}

\let\OLDthebibliography\thebibliography
\renewcommand\thebibliography[1]{
  \OLDthebibliography{#1}
  \setlength{\parskip}{0pt}
  \setlength{\itemsep}{0pt plus 0.3ex}
}

\title{The degree-restricted random process is far from uniform}
\author{Michael Molloy\thanks{Dept of Computer Science,
University of Toronto, Toronto~M5S 3G4, Canada. E-mail: {\tt molloy@cs.toronto.edu}.  Supported by NSERC Discovery Grant 2019-06522.} \ 
and Erlang Surya\thanks{Department of Mathematics, University of California, San Diego, La Jolla CA~92093, USA. E-mail: {\tt esurya@ucsd.edu}. Supported by NSF~grant DMS-2225631.} \  
and Lutz Warnke\thanks{Department of Mathematics, University of California, San Diego, La Jolla CA~92093, USA. E-mail: {\tt lwarnke@ucsd.edu}. Supported by NSF~CAREER grant~DMS-2225631 and a Sloan Research Fellowship.}}
\date{April~5, 2023; revised August~11, 2025}

\begin{document}

\maketitle

\begin{abstract}
The degree-restricted random process is a natural algorithmic model for generating graphs with degree sequence~$\bd_n=(d_1, \ldots, d_n)$:
starting with an empty $n$-vertex graph, it sequentially adds new random edges so that the degree of each vertex~$v_i$ remains at most~$d_i$. 
Wormald conjectured in~1999 that, for $d$-regular degree sequences~$\bd_n$, 
the final graph of this process is similar to a uniform random $d$-regular~graph. 

In this paper we show that, for degree sequences~$\bd_n$ that are not nearly regular, 
the final graph of the degree-restricted random process differs~substantially from a uniform random graph with degree sequence~$\bd_n$. 
The combinatorial proof~technique is our main conceptual contribution:    
we adapt the switching method to the degree-restricted process, 
demonstrating that this enumeration technique 
can also be used to analyze stochastic processes (rather than just uniform random~models, as~before). 
\end{abstract}

\section{Introduction}
Random graph processes that grow step-by-step over time are 
powerful in both theory and practice: 
they are often used to generate sophisticated combinatorial objects with surprising properties~\cite{B2009,BK2010,FPGM2013,BK2013,BW2018}, 
and are also frequently used to model complex networks arising in applications~\cite{AlbertBarabasi,DorogovtsevMendes,Dorogovtsev,Barabasi,Newman}. 
Rather few proof techniques exist for analyzing such stochastic processes, 
which is 
why even the most basic questions about these processes are often hard to answer.
In this paper we add the so-called `switching method' to the list of 
techniques for analyzing stochastic processes.

This paper concerns the perhaps simplest random graph process that attempts to generate a graph with a given graphic\footnote{As usual, a degree-sequence $\bdn$ is called \emph{graphic} if it is the degree sequence of some simple graph. 
It is well known (and easy to check) that any degree sequence $\bdn={\bigpar{d_1^{(n)},\dots, d_n^{(n)}}} \in {\{0, \ldots, \Delta\}^n}$ is graphic when the degree sum $\sum_{i \in [n]}d_i^{(n)}$ is even and larger than some sufficiently large constant~$m'_0=m'_0(\Delta)$; see~\refL{completionprob}.} degree sequence~$\bdn=\bigpar{d_1^{(n)},\dots, d_n^{(n)}}$: 
the \emph{degree-restricted random $\bdn$-process} starts with an empty $n$-vertex graph, and then step-by-step adds a new random edge subject to the constraint that the degree of each vertex~$v_i$ remains at most~$d_i$ (without creating loops or multiple edges).
The \mbox{$\bdn$-process} is so natural that it has been studied since the mid 1980s 
in chemistry~\cite{KQ1985,BQ87,BQ90}, 
combinatorics~\cite{RW,Wreg,TWZ} and statistical physics~\cite{BNK}, 
often with a focus on $d$-regular degree sequences~$\bd_n$. 
For graphic degree sequences~$\bdn$ with constant maximum degree, 
Ruci{\'n}ski and Wormald~\cite{RW} showed in~1990 that the $\bdn$-process 
is a natural algorithmic model in the sense 
that it typically generates\footnote{The $\bdn$-process does not always generate a graph with the desired degree sequence. For example, using the graphic degree sequence~$\mathbf{d_4}=(2,2,2,2)$ it is possible to get stuck after three steps if we create a triangle.} a graph with degree sequence~$\bdn$. 
Conceptually the perhaps most interesting open problem remaining~\cite{BQ1993,BQ1993A,Wreg,TWZ} 
concerns its distribution: 
is the final graph of the $\bdn$-process similar to a  uniform random graphs with degree sequence~$\bdn$?

To make this question precise, let~$\Gpdn$ be the final graph of the $\bdn$-process conditioned on having degree sequence~$\bdn$, and let~$\Gdn$ be the uniform random graph with degree sequence~$\bdn$.
Since the distributions of~$\Gpdn$ and $\Gdn$ are not\footnote{The fact that the distributions of~$\Gpdn$ and~$\Gdn$ are not identical can be seen by inspection of specific sequences on a few vertices; see \refApp{counterexample} for an example.
For $2$-regular degree sequences~$\bdn$ certain expectations and probabilities also differ~\cite{RWshort,TWZ} slightly, 
but these minor differences do not rule out contiguity (since they do not concern high probability~events).} identical,
it is natural~\cite{RobW,sj,cfmr,mrrw,kw,sj2} 
to focus on similarity with respect to their typical properties. 
More formally, the remaining key question is whether $\Gpdn$ and $\Gdn$ are \emph{contiguous}, i.e., if every property that holds with~high~probability\footnote{As usual, we say that a graph property holds~\emph{with~high~probability} if it holds with probability tending to~$1$ as~$n\to \infty$.} in one also holds with~high~probability in the other (see~\cite[Section~9.6]{JLR}).  
In~fact, Wormald conjectured in~1999 that~$\Gpdn$ and~$\Gdn$ are contiguous for $d$-regular degree sequences~$\bdn$, for any fixed~$d\geq 2$ (see~\cite[Conjecture~6.3]{Wreg}). 
Until now these contiguity questions have remained open, 
partly because we are lacking suitable proof techniques for such 
questions in random graph~processes such as~$\Gpdn$ (which are much harder to analyze\footnote{For example, a moment of reflection reveals that even estimating~$\Pr(\Gpdn=G)$ is challenging; see~\cite[Section~1]{RW}.} than uniform random graphs such~as~$\Gdn$).

In this paper we show that, for graphic degree sequences~$\bdn$ with constant maximum degree~$\Delta=O(1)$, 
the final graph~$\Gpdn$ of the $\bdn$-process is \emph{not}~contiguous to the uniform random graph~$\Gdn$, 
unless the degree sequence~$\bdn$ is nearly regular (which means that all but~$o(n)$ vertices have the same degree). 
In other words, the final graph of the degree-restricted random process differs~substantially from a uniform random graph with degree sequence~$\bd_n$. 
Here we denote the number of vertices in~$\bdn$ with degree~$j$ by 
\[n_j:=n_j(\bdn):=\bigabs{\bigcpar{v\in \{1,\dots, n\} \: : \: d^{(n)}_v=j}}. \]
%
\begin{theorem}[The degree restricted process is far from uniform]\label{final}
Fix two constants: an integer~${\Delta > 1}$ and a real~${\xi \in (0,1)}$. 
Assume that the graphic degree sequence ${\bd_n=\bigpar{d_1^{(n)},\dots, d_n^{(n)}}\in \{1,2, \ldots, \Delta \}^n}$
 satisfies the {`non-regularity'} assumption~${\max_{1\le j\le \Delta} n_j/n\le 1-\xi}$. 
Then the final graph~$\Gpdn$ of the degree restricted random \mbox{$\bdn$-process}
is not~contiguous to 
the uniform random graph~$\Gdn$ with degree sequence~$\bd_n$. 
\end{theorem}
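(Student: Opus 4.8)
The plan is to refute contiguity by exhibiting a single graph statistic that concentrates at \emph{different} values under the two models. Fix a bounded-size template $H$---a small graph together with a prescribed degree $d_v^{(n)}\in\{1,\dots,\Delta\}$ for each of its vertices---and let $X(G)$ count the copies of $H$ in $G$ whose vertices have exactly the prescribed degrees in $\bdn$. The target is
\[
\bigabs{\E X(\Gpdn)-\E X(\Gdn)}=\Theta(n),\qquad\text{together with}\qquad \Var X(\Gpdn)=o(n^2)\ \text{and}\ \Var X(\Gdn)=O(n).
\]
Given this, the event $\bigcpar{X\le \E X(\Gdn)+n^{2/3}}$ holds \whp under one of the two models and fails \whp under the other, which contradicts contiguity. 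For $\Gdn$ both $\E X(\Gdn)$ and the bound $\Var X(\Gdn)=O(n)$ are classical and follow from the usual switching estimates for uniform random graphs with a given degree sequence, so that side is routine; essentially all the work is on the process.

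\textbf{Choosing the template.} The non-regularity hypothesis $\max_{j\le\Delta}n_j\le(1-\xi)n$, together with $\Delta=O(1)$, forces (by pigeonhole over the at most $\Delta$ possible degree values) two degrees $a\ne b$ with $n_a,n_b=\Omega(n)$, so there are always two macroscopic degree classes to exploit. I would take $H$ to be the smallest configuration keyed to $a$ and $b$ for which the process/uniform discrepancy does not vanish---for instance a single edge with both endpoints of degree $a$, or a two-edge path with centre of degree $b$ and both leaves of degree $a$---and pin down the exact choice (and the exact values $a,b$) so that the leading-order contributions in the next step genuinely differ rather than cancel.

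\textbf{The process side via switchings.} This is the conceptual core, and the place where we adapt the switching method to a stochastic process. A switching replaces a pair of edges $\{e_1,e_2\}$ of $G$ by a pair $\{e_1',e_2'\}$ on the same four endpoints, producing a graph $G'$ with the same degree sequence $\bdn$; in the uniform model $\Pr(\Gdn=G)=\Pr(\Gdn=G')$ trivially, whereas for the process
\[
\frac{\Pr(\Gpdn=G)}{\Pr(\Gpdn=G')}=\frac{\sum_{\pi}\prod_{t}1/a_t(\pi)}{\sum_{\pi'}\prod_{t}1/a_t(\pi')},
\]
where $\pi$ (resp.\ $\pi'$) ranges over the orderings of $E(G)$ (resp.\ $E(G')$), $a_t(\cdot)$ denotes the number of edges addable at step $t$, and the conditioning on the process succeeding (probability $1-o(1)$ by Ruci{\'n}ski and Wormald~\cite{RW}) cancels between numerator and denominator. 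I would lift a single graph switching $G\mapsto G'$ to a near-bijection between these two ensembles of edge-orderings and track how the product $\prod_t 1/a_t$ changes along a typical run: all but a bounded number of steps are unaffected, and the affected ones shift the probability by a factor $1+\Theta(1)$ (not merely $1+o(1)$), with sign and magnitude governed by $a$ and $b$. Feeding these ratios into the standard weighted bipartite-switching count over the $\Omega(n)$ placements of $H$ and of its switched counterpart then yields $\E X(\Gpdn)=(1+\Theta(1))\,\E X(\Gdn)$, hence the desired $\Theta(n)$ gap; an analogous computation with two disjoint switchings controls $\E\bigsqpar{X(X-1)}$ and gives $\Var X(\Gpdn)=o(n^2)$. (Concentration for the process could instead be obtained from an Azuma/Freedman bound, since $X$ changes by $O(1)$ at each of the $O(n)$ steps, first for the unconditioned process and then transferred via the $1-o(1)$ success probability.)

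\textbf{The main obstacle.} Everything rests on the switching-on-histories step. Unlike in the uniform setting, a switching between processes is not a bijection between equinumerous objects but a map between ensembles of trajectories, whose effect is a \emph{product} of conditional acceptance probabilities, each depending on the global state (which vertices are already saturated, and which edge-slots survive). Showing that this accumulated factor is bounded away from $1$---uniformly enough over trajectories and over the $\Omega(n)$ placements of $H$ for the weighted sum to behave---requires control of the typical evolution of the process (how the pool of unsaturated vertices of each degree shrinks over time), which is where the differential-equation method or a careful direct estimate must enter, and is the reason one needs $\Delta=O(1)$. The second delicate point is calibrating $H$: known second-order computations for $2$-regular sequences show that such leading terms \emph{can} cancel, so $H$ must be chosen precisely to avoid this.
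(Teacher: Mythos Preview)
Your overall strategy---isolate a degree-sensitive edge statistic and compare its typical values under the two models via switchings on trajectories---is exactly the route the paper takes, with $X=X_k$ counting edges whose endpoints both have degree at most $k$. But your proposal has a genuine gap at precisely the point you flag as ``the main obstacle'', and the resolution is not the one you sketch.

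Your key claim is that after lifting a graph switching $G\mapsto G'$ to a near-bijection of edge-orderings, the probability ratio $\Pr(\Gpdn=G)/\Pr(\Gpdn=G')$ comes out as $1+\Theta(1)$ with sign determined by the degrees $a,b$ at the switched endpoints. This is \emph{false} for individual pairs $G,G'$: the paper exhibits an explicit counterexample (for $k=2$) where the switch goes in the degree-predicted direction yet the ratio is strictly below $1$. The reason is a ``neighbourhood effect'': whether a vertex saturates early depends not just on its own degree but on the degrees of its neighbours, and this can overwhelm the first-order degree comparison. So the signed-ratio step, which your weighted switching count relies on, does not hold pointwise and cannot be salvaged by averaging over placements of $H$ alone.

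The paper's fix is a second layer of averaging that you do not have: it partitions $G_{ab,xy}$ into \emph{clusters}---equivalence classes in which the neighbour-set of $\{A,B,X,Y\}$ is held fixed but the internal wiring is allowed to vary---and proves $Z(C^+)\ge Z(C^-)$ only after summing over a cluster (this kills the neighbourhood effect by symmetrising it). Even then the strict $(1+\epsilon)$ gap holds only for ``good'' clusters, and a separate counting lemma shows these carry a positive fraction of the mass. Your fallback suggestion of using the differential-equation method to control the evolution is also discussed in the paper (Section~6): it is easy to set up, but proving analytically that the resulting limit $\rho_k$ differs from $\hat\rho_k=\mu/n$ for \emph{all} degree sequences satisfying the non-regularity hypothesis has so far resisted proof, which is precisely why the paper resorts to the cluster-switching machinery instead.
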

This theorem follows from a more general result we shall present in~\refS{sec:mainresult},
which shows that a certain edge-statistic typically differs substantially between~$\Gpdn$ and~$\Gdn$.
This yields a stronger discrepancy between~$\Gpdn$ and $\Gdn$ than \mbox{non-contiguity}, 
including that the total variation distance between them is nearly~maximal, 
and also the existence of a simple 
algorithm that can distinguish between~them.

The combinatorial proof~technique used for \refT{final} is our main conceptual contribution: 
we adapt the switching method~\cite{McKay1984, GodsilMcKay1984, McKayWormald1991, Wreg, McKay2010, GM2013, GW2016} to the degree-restricted random $\bdn$-process, 
demonstrating that this 
enumeration technique can also be used to analyze stochastic processes (rather than just uniform random~models, as~before). 
Interestingly, this is not the first time that 
the \mbox{$\bdn$-process} has stimulated the development of a new proof technique for stochastic processes: 
indeed, the widely-used 
differential equation method~\mbox{\cite{RW,DEM,DEM99}} as well as the associated self-correction idea~\cite{RWdc,TWZ} were 
first developed for the random~\mbox{$\bdn$-process};   
both were crucial for later breakthroughs in Ramsey Theory~\cite{B2009,BK2010,FPGM2013,BK2013}, demonstrating the 
potential of developing new proof techniques for stochastic processes such as the random $\bdn$-process.

\subsection{Main result: discrepancy in edge-statistics}\label{sec:mainresult} 
Our main result shows that the number of certain edges differs substantially between the degree restricted random process and a uniform random graph with degree sequence~$\bd_n$.
To formalize this difference, note that after rescaling~$\xi$ to~$\xi/2$ (as we may), 
the `non-regularity' assumption of \refT{final} implies that
\begin{equation}\label{keyassumption}
\sum_{1\le j\le k} n_j/n \in [\xi,1-\xi]
\end{equation}
for some integer~$1 \le k <\D$.  
For any graph~$G$, let $\Xs(G)$ denote the number of edges in $G$ whose endpoints are of degree at most~$k$; we call such edges {\em small~edges}. 
\refT{transferred} with $\eps=\beta/2$, say, shows that~$\Xs(\Gpdn)$ and~$\Xs(\Gdn)$ differ with high probability, which implies the non-contiguity result~\refT{final}. 
Here 
\begin{equation}\label{def:mu}
\mu = \mu(\bdn,k) := \frac{(\sum_{1\le j\le k} jn_j)^2}{4m} \qquad \text{ with } \qquad 
m = m(\bdn) := \frac{\sum_{1\le j\le \Delta} jn_j}{2} , 
\end{equation}
where~$\mu$ approximates the expected number of small edges in the uniform random graph~$\Gdn$ (see~\refApp{sec:uniformtransfer}), 
and~$m$ equals the number of edges of the degree sequence~$\bdn$. 
\pagebreak[2]
\begin{theorem}[Discrepancy in number of small edges]\label{transferred} %
Fix two constants: an integer~${\Delta > 1}$ and a real~${\xi \in (0,1)}$.
Assume that the graphic degree sequence~$\bdn\in \{1,\dots,\Delta\}^n$ satisfies for some integer~$k=k(\bdn) \in {\{1, \ldots, \Delta-1\}}$ the \mbox{`non-regularity'} assumption~\eqref{keyassumption}.
Then the following holds, with~$\mu = \mu(\bdn,k)$ as defined in~\eqref{def:mu}:\vspace{-0.5em}%
\begin{romenumerate}
\itemindent 0.125em \itemsep 0.125em \parskip 0.125em 
		\item For any~$\eps=\eps(n) \gg n^{-1/2}$, the uniform random graph~$\Gdn$ with degree sequence~$\bdn$ satisfies \label{transferred:uniform}
\begin{equation}\label{eq:transferred:uniform}
\Pr\bigpar{\bigabs{\Xs\bigpar{\Gdn}-\mu} < \eps \mu} \ge 1-e^{-\Theta(\eps^2n)}.
\end{equation}
		\item There is a constant~$\beta=\beta(\xi, \Delta)>0$ such that the final graph~$\Gpdn$ of the \mbox{$\bdn$-process} satisfies \label{transferred:process}
\begin{equation}\label{eq:transferred:process}
\bP\bigpar{\bigabs{\Xs\bigpar{\Gpdn}-\mu} > \beta \mu} \ge 1-e^{-\Theta(n)}.
\end{equation}
\end{romenumerate}
\end{theorem}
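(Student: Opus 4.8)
\noindent\textbf{Part~\ref{transferred:uniform} (the uniform model).} This half is routine. Realise $\Gdn$ via the configuration model: take a uniformly random perfect matching~$F$ of the~$2m$ half-edges and condition on the resulting multigraph being simple, an event of probability~$\Omega(1)$ since~$\Delta=O(1)$. Writing $S:=\sum_{1\le j\le k}jn_j$ for the number of \emph{small} half-edges (those at vertices of degree~$\le k$), the expected number of small edges produced by~$F$ is $\binom{S}{2}/(2m-1)=\mu+O(1)$, and the $O(1)$-sized conditioning on simplicity perturbs this by only~$O(1)$, so $\bigabs{\E\Xs(\Gdn)-\mu}=O(1)$ (this is the computation behind~\refApp{sec:uniformtransfer}). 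For concentration, reveal~$F$ one half-edge at a time: a standard coupling shows the induced Doob martingale has $O(1)$-bounded differences (re-matching a single half-edge can be compensated by altering one further pair), so Azuma's inequality gives $\bP(\bigabs{\Xs-\E\Xs}>\lambda)\le e^{-\Theta(\lambda^2/m)}$ for the configuration model; undoing the $\Omega(1)$-probability conditioning and taking $\lambda=\eps\mu/2$, while using $\mu=\Theta(n)$ and $m=\Theta(n)$ (from~\eqref{keyassumption}), yields~\eqref{eq:transferred:uniform}.

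\noindent\textbf{Part~\ref{transferred:process} (the process): why the answer differs from~$\mu$.} The conceptual point is that the $\bdn$-process adds each new edge between two (essentially) uniformly random \emph{active} vertices, rather than between two uniformly random \emph{half-edges} as in the configuration model. Hence a vertex gains edges at a rate depending only on whether it is still active, not on how many more edges it needs; a low-degree vertex therefore remains active longer than in~$\Gdn$, the active set stays enriched in small vertices throughout, and small edges are over-produced. To quantify this: let $A(s)$ and $A_k(s)$ count the active vertices, resp.\ the active vertices of degree~$\le k$, after an $s$-fraction of the~$m$ edges have been added; then $\Xs$ of the final graph should be $\approx\int_0^1(A_k(s)/A(s))^2\,m\,\dd s$. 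The substitution $\phi=\int_0^s 2m/A(u)\,\dd u$ makes $A(s)=n_{>\phi}$ and $A_k(s)=n_{(\phi,k]}$ (the numbers of vertices of degree~$>\phi$, resp.\ of degree in~$(\phi,k]$), turning this into $\tfrac12\int_0^{\Delta}n_{(\phi,k]}^2/n_{>\phi}\,\dd\phi$; since $\int_0^{\infty}n_{(\phi,k]}\,\dd\phi=S$ and $\int_0^{\infty}n_{>\phi}\,\dd\phi=2m$, Cauchy--Schwarz gives $\tfrac12\int_0^{\Delta}n_{(\phi,k]}^2/n_{>\phi}\,\dd\phi\ge S^2/(4m)=\mu$. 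Assumption~\eqref{keyassumption} forces a \emph{quantitative} gap: $n_{(\phi,k]}/n_{>\phi}\ge\xi$ for $\phi\in(0,1)$ but equals~$0$ for $\phi\in(k,k+1)$, on which interval $n_{>\phi}=n-\sum_{j\le k}n_j\ge\xi n$, so the Cauchy--Schwarz defect $\tfrac12\iint(h(\phi)-h(\psi))^2 g(\phi)g(\psi)$ (with $h=n_{(\phi,k]}/n_{>\phi}$, $g=n_{>\phi}$) is $\Omega_{\xi}(n^2)$ and $\tfrac12\int_0^{\Delta}n_{(\phi,k]}^2/n_{>\phi}\,\dd\phi\ge(1+\beta)\mu$ for a constant $\beta=\beta(\xi,\Delta)>0$ (one can take $\beta=\xi^3/\Delta^3$). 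Reassuringly, this gap vanishes exactly for regular sequences, where $\Xs\equiv0=\mu$. For the \emph{unconditioned} process this heuristic can be made rigorous by the differential equation method, tracking the residual-degree profile together with the running small-edge count; one is helped here by the fact that the integrand vanishes once $\phi>k$, i.e.\ once all small vertices are saturated, which occurs at an $s$-fraction bounded away from~$1$ (roughly $1-\xi/\Delta$) where the active set still has size~$\Omega(n)$, so this trajectory analysis never touches the notoriously delicate endgame. Combined with the fact that $\Xs$ changes by at most~$1$ per added edge (Azuma), this shows the unconditioned final graph has $\Xs\ge(1+\beta/2)\mu$ with probability $1-e^{-\Theta(n)}$.

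\noindent\textbf{Part~\ref{transferred:process}: transferring to $\Gpdn$, and the main obstacle.} The remaining step --- from the unconditioned process to $\Gpdn$, which is conditioned on reaching~$\bdn$ --- is where the switching method enters and is the real content of the paper. Write $\bP(\Gpdn=G)=W(G)/\sum_{G'}W(G')$, the sum over graphs with degree sequence~$\bdn$, where $W(G):=\bP(\text{the process builds }G)=\sum_\sigma\prod_{t=1}^m 1/N(G_{\sigma,<t})$ (sum over orderings~$\sigma$ of~$E(G)$, with $N(H)$ the number of legal moves out of a partial graph~$H$), and set $P_t:=\sum_{G:\,\Xs(G)=t}W(G)$; then~\eqref{eq:transferred:process} amounts to $\sum_{t:\,|t-\mu|\le\beta\mu}P_t\le e^{-\Theta(n)}\sum_tP_t$. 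The plan is to define \emph{switchings} on graphs with degree sequence~$\bdn$ --- replace a small--small edge and a suitably placed non-small--small edge by two non-small--small edges (and conversely), changing~$\Xs$ by~$1$ --- and compare $P_t$ with $P_{t+1}$ through the resulting weighted in/out counts, showing that the mass of $\sum_tP_t$ concentrates near the value predicted above, namely near $(1+\beta)\mu$ rather than~$\mu$. Working with the ratios $P_{t+1}/P_t$ has the key advantage that one never needs a lower bound on $\sum_tP_t=\bP(\text{the process reaches }\bdn)$, whose order is only known for restricted families of degree sequences. The crux --- and exactly what ``adapting switchings to a stochastic process'' requires --- is controlling the weight ratio $W(G')/W(G)$ when $G'$ arises from $G$ by a single switching: for uniform random graphs $W\equiv1$ and this is free, whereas here $W(G)$ is an average over all~$m!$ build-orderings of a product of~$\Theta(n)$ reciprocals of move-counts, and a local edge swap perturbs~$\Theta(1)$ of those move-counts in every ordering. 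That perturbation must be both \emph{predictable} (so the telescoping product $\prod_tP_{t+1}/P_t$ can be evaluated) and \emph{tightly controlled} --- even an $e^{O(1)}$ multiplicative slack per value of~$t$ would be fatal over the length-$\Theta(n)$ range of relevant~$t$ --- and the hardest regime is precisely the endgame, where only $o(n)$ active vertices remain and inserting or deleting a single edge can change a move-count by a constant factor. Taming this endgame --- always the thorniest feature of the $\bdn$-process, and the reason the differential equation method alone delivers only $e^{-n^{\Omega(1)}}$ rather than $e^{-\Theta(n)}$ tails --- is where I expect the bulk of the work to lie.
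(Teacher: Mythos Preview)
Your Part~\ref{transferred:uniform} matches the paper. For Part~\ref{transferred:process} you have misread where the difficulty lies. Conditioning on completion is \emph{not} the hard step: by \refL{completionprob} the process completes with probability bounded below by a positive constant, so passing from the unconditioned process to~$\Gpdn$ costs only an~$O(1)$ factor and needs no switching at all. The paper's switching machinery serves a different purpose: the authors could not make the differential-equation route rigorous (see \refS{sec:diffeq}, where exactly your heuristic is discussed and explicitly left as an open problem), so they abandon DEM and instead compare $\Pr(\Gpdn=G^+)$ with $\Pr(\Gpdn=G^-)$ directly, by switching on \emph{trajectories} (edge-sequences) of the process and averaging over carefully designed clusters of graphs to repair the failure of the naive per-graph inequality~\eqref{wish} when~$k\ge 2$. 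Your last paragraph sketches roughly the right shape of a switching argument, but misattributes its role and omits the two key ideas (edge-sequence switching and clusters/twins) that make it go through.

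Your DEM heuristic also contains a concrete error: the substitution $\phi=\int 2m/A$ does \emph{not} make $A(s)=n_{>\phi}$. The correct DEM trajectory is $u_j(\phi)=r_j\,\Pr(\Po(\phi)<j)$ (Poisson saturation, not deterministic saturation at time~$\phi=j$; see the footnote in \refS{sec:diffeq}), so your step-function gap argument on the interval $(k,k+1)$ is vacuous. Intriguingly, your Cauchy--Schwarz idea appears to survive this correction: with the true $u_k(\phi)=\sum_{j\le k}r_j\Pr(\Po(\phi)<j)$ one still has $\int_0^\infty u_k\,d\phi=\sum_{j\le k}jr_j$ and $\int_0^\infty u\,d\phi=2T$, whence $\rho_k=\tfrac12\int u_k^2/u\,d\phi\ge \hrho_k$, with strict inequality since $u_k/u$ is non-constant under~\eqref{keyassumption}; carried through carefully, this would address the analytic question \refS{sec:diffeq} leaves open. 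What it does not obviously deliver is the $e^{-\Theta(n)}$ tail in~\eqref{eq:transferred:process}: your ``Azuma'' step controls $X_k-\sum_t\E[I_t\mid\cF_{t-1}]$, but the compensator $\sum_t\E[I_t\mid\cF_{t-1}]$ is itself a random functional of the trajectory, and standard DEM pins it down only with probability $1-o(1)$, not $1-e^{-\Theta(n)}$.
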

Invoking again \refT{transferred} with~$\eps=\beta/2$, note that, with probability at least $1-e^{-\Theta(n)}$, 
the number of small edges satisfies~$|\Xs(\Gpdn)-\mu| > \beta \mu$ and~$|\Xs(\Gdn)-\mu| < \tfrac{1}{2}\beta \mu$.
This striking difference between~$\Gpdn$ and $\Gdn$ has a number of interesting consequences: 
(a) a polynomial algorithm can tell~$\Gpdn$ and~$\Gdn$ apart by simply counting the number of small edges, 
(b) the edit-distance\footnote{Given two graphs~$G_1, G_2$ with the same number of vertices, the \emph{edit-distance} between them is defined as the minimum number of edge-changes (addition or removal) one needs to apply to~$G_1$ in order to obtain a graph that is isomorphic to~$G_2$.}
between~$\Gpdn$ and $\Gdn$ is at least~${\tfrac{1}{2}\beta\mu = \Theta(n)}$, i.e., very large, 
and (c) the total variation distance\footnote{As usual, the \emph{total variation distance}~$\mathrm{d}_{\mathrm{TV}}(\Gpdn,\Gdn)$ is defined as~$\tfrac{1}{2}\sum_G|\Pr(\Gpdn=G)-\Pr(\Gdn=G)|$, where the sum is taken over all possible graphs~$G$ with degree sequence~$\bdn$.}
between~$\Gpdn$ and~$\Gdn$ is nearly maximal, i.e., close to~one: 
\begin{equation}
\bigabs{\mathrm{d}_{\mathrm{TV}}\bigpar{\Gpdn, \: \Gdn}-1} \le e^{-\Theta(n)}.
\end{equation}

The proof of inequality~\eqref{eq:transferred:uniform} for the uniform random graph with degree sequence~$\bdn$ is standard: 
it is based on routine configuration model~\mbox{\cite{BB1980,Wreg,FK}} arguments, and appears in \refApp{sec:uniformtransfer}.  
Our main contribution is the proof of inequality~\eqref{eq:transferred:process} for the \mbox{$\bdn$-process}: 
it is based on an intricate edge-switching argument involving the trajectories of the random $\bdn$-process, 
see the proof outline in~\refS{ssps} and the technical details in Sections~\mbox{\ref{setupandcore}--\ref{sec:counting}}.
In~\refS{sec:diffeq} we also mention a potential alternative approach 
based on the differential equation method, which so far has resisted rigorous analysis (which is why we resorted to the switching~method).

We now give some intuition as to why the number~$X_k$ of small edges differs between~$\Gdn$ and~$\Gpdn$. 
Namely, if we construct a random graph with degree sequence~$\bdn$ using the standard configuration model by adding edges step-by-step (one at a time), 
then in each step the probability of selecting a vertex~$v$ as an endpoint is proportional to the current unused degree of that vertex. 
By contrast, in the degree restricted $\bdn$-process all vertices with positive unused degree are (approximately) equally likely to be selected in each~step. 
This `preferential versus uniform' difference heuristically suggests that small edges tend to be created with higher probability in the $\bdn$-process, 
which makes the edge-discrepancies in \refT{transferred} plausible (see also~\refR{rem:lowertail}).


\subsection{Proof strategy: switching method}\label{ssps}
%
%
The basic idea of the switching method 
is to estimate ratios of closely related set-sizes via local perturbations: 
namely, by defining a suitable `switching operation' that maps objects in~$\cA$ to objects in~$\cB$, one can often estimate the set-size ratio~$|\cA|/|\cB|$ fairly precisely via a double-counting argument 
(by estimating the number of switchings from~$\cA$ to~$\cB$ and the number of inverse
switchings from~$\cB$ to~$\cA$).
This has 
been successfully used to \mbox{enumerate} many combinatorial structures and analyze uniform models thereof, 
including graphs with a given degree sequence~\mbox{\cite{McKay1984,McKayWormald1991,Wreg,GM2013,GW2016,JPRR2018}}, \mbox{$0$-$1$-matrices}~\mbox{\cite{GMX2006,McKay2010}}, regular hypergraphs~\cite{DFRS2013}, structured graph classes~\cite{NWpert1996,GS2016,CP2019},  Latin rectangles~\cite{GodsilMcKay1984} and Latin squares~\cite{KS2018,
KSS2022}.

Our proof of \refT{transferred}~\ref{transferred:process} adapts the switching method to the degree restricted random~\mbox{$\bdn$-process}, 
which requires several new ideas.
Indeed, for uniform random structures such as~$\Gdn$ the normalizing constants in~${\Pr(\Gdn \in \cA)/\Pr(\Gdn \in \cB)}={|\cA|/|\cB|}$ cancel, and the switching method directly applies. 
This simplification does not happen for stochastic processes like the \mbox{$\bdn$-process}, 
so we needed to develop a form of the switching method that allows for different probabilities:
roughly speaking, 
we (i)~apply switching operations directly to the trajectories of the random \mbox{$\bdn$-process}, 
and then (ii)~average over the probabilities of these trajectories and their corresponding~graphs; 
both ideas should also aid analysis of other stochastic~processes. 

The starting point of our proof~strategy
is the following well-known\footnote{This switching operation was introduced by Peterson~\cite{P1891,PM92} in year~1891 (to analyze structural properties 
of regular~graphs).}  
\emph{\mbox{switching}~\mbox{operation} on~graphs}:
given a graph~$G^+$ with degree sequence~$\bdn$ where the two edges $ab, xy$ satisfy the degree constraints ${\max\{\deg(a), \deg(b)\}\le k}$ and ${\min\{\deg(x),\deg(y)\}>k}$, 
we write $G^-$ for the graph obtained by replacing\footnote{See Sections~\ref{sec:configurationgraph}--\ref{sec:relaxed} for how we deal with the possibility that this replacement might create multiple edges.} the edges~$ab,xy$ with the edges~$ax,by$; see \refF{fig:switching} for an example with~$k=1$. 
Note that~$G^+$ and~$G^-$ both have the same degree sequence~$\bdn$, and, 
more importantly, that~$G^+$ has exactly one small edge more~than~$G^-$,~i.e., 
\begin{equation}\label{eq:switch}
X_k(G^+)=X_k(G^-)+1.
\end{equation}
Since our goal is to show that the $\bdn$-process prefers more small edges than the uniform model~$\Gdn$, 
in view of~${\P(\Gdn=G^+)/\P(\Gdn=G^-)}=1$ 
we thus would like to prove the probability ratio~estimate 
\begin{equation}\label{wish}
 \frac{\P\bigpar{\Gpdn=G^+}}{\P\bigpar{\Gpdn=G^-}}\ge 1+\eps
\end{equation}
for some constant~$\eps>0$,  
as this would imply the desired discrepancy in the number of small~edges. 

In \refS{simplestrategy} we first sketch a switching argument for establishing~\eqref{wish} in the special case~${k=1}$, and outline how it implies \refT{transferred}~\ref{transferred:process}.
Afterwards, in \refS{generalstrategy} 
we discuss how we modify this switching argument for the more interesting general case~${k\geq 1}$, where we need to deal with the major obstacle that~\eqref{wish} is not always true: see \refF{fig:counter} in  \refApp{counterexample} for a~counterexample with~$k=2$. 
Finally, in \refS{sec:switching:comp} we compare our approach with classical switching arguments (that do not concern stochastic~processes). 

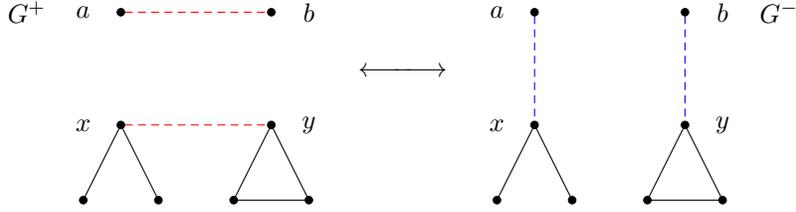
\begin{figure}
\centering
\begin{tikzpicture}
	\begin{pgfonlayer}{nodelayer}
		\node [style=black] (0) at (-2.25, 2) {};
		\node [style=black] (1) at (-0.25, 2) {};
		\node [style=black] (2) at (-2.25, 0.5) {};
		\node [style=black] (3) at (-0.25, 0.5) {};
		\node [style=none] (4) at (1.5, 1.35) {$ \xlongleftrightarrow{\phantom{iiiiiiiii}}$};
		\node [style=none] (9) at (-2.75, 2) {$a$};
		\node [style=none] (10) at (0.25, 2) {$b$};
		\node [style=none] (11) at (-2.75, 0.5) {$x$};
		\node [style=none] (12) at (0.25, 0.5) {$y$};
		\node [style=black] (13) at (-2.75, -0.5) {};
		\node [style=black] (14) at (-1.75, -0.5) {};
		\node [style=black] (15) at (-0.75, -0.5) {};
		\node [style=black] (16) at (0.25, -0.5) {};
		\node [style=black] (17) at (3.25, 2) {};
		\node [style=black] (18) at (5.25, 2) {};
		\node [style=black] (19) at (3.25, 0.5) {};
		\node [style=black] (20) at (5.25, 0.5) {};
		\node [style=none] (21) at (2.75, 2) {$a$};
		\node [style=none] (22) at (5.75, 2) {$b$};
		\node [style=none] (23) at (2.75, 0.5) {$x$};
		\node [style=none] (24) at (5.75, 0.5) {$y$};
		\node [style=black] (25) at (2.75, -0.5) {};
		\node [style=black] (26) at (3.75, -0.5) {};
		\node [style=black] (27) at (4.75, -0.5) {};
		\node [style=black] (28) at (5.75, -0.5) {};
		\node [style=none] (29) at (-3.5, 2) {$G^+$};
		\node [style=none] (30) at (6.5, 2) {$G^-$};
	\end{pgfonlayer}
	\begin{pgfonlayer}{edgelayer}
		\draw[red, densely dashed] (0) to (1);
		\draw[red, densely dashed] (2) to (3);
		\draw (2) to (13);
		\draw (2) to (14);
		\draw (3) to (15);
		\draw (15) to (16);
		\draw (3) to (16);
		\draw (19) to (25);
		\draw (19) to (26);
		\draw (20) to (27);
		\draw (27) to (28);
		\draw (20) to (28);
		\draw[blue, densely dashed] (17) to (19);
		\draw[blue, densely dashed] (18) to (20);
	\end{pgfonlayer}
\end{tikzpicture}

\caption{Switching example: the edges $ab,xy$ in~$G^+$ are replaced with the edges~$ax,by$ to obtain~$G^-$ (all other edges remain unchanged). Note that $G^+$ and $G^-$ have the same vertex degrees, and that $G^+$ has one more edge than~$G^-$ where both endpoints have degree one (which are so-called small~edges).\label{fig:switching}}%
\end{figure}

\subsubsection{Special case~$k=1$: comparison of edge-sequences of graphs}\label{simplestrategy}
To prove the probability ratio estimate~\eqref{wish} in the special case~$k=1$, we look at the trajectories of the \mbox{$\bdn$-process},
which means that we take the order of the added edges into account (this is important, since different orderings of the same edge set can occur with different probabilities). 
In particular, by summing over the set~$\Pi_G$ of all ordered edge-sequences of a given graph~$G$,
the probability that the $\bdn$-process produces~$G$~is
\begin{equation}\label{eq:Pr:expand}
\bP\bigpar{\Gpdn=G}=\sum_{\sigma\in \Pi_G} \bP(\sigma),
\end{equation}
where~$\bP(\sigma)$ denotes the probability that the $\bdn$-process produces the edge-sequence~$\sigma$.
%
%
%
Recall that, in our switching operation on graphs, the edges $ab,xy$ in~$G^+$ are replaced with the edges~$ax,by$ to obtain~$G^-$; 
see \refF{fig:switching}. 
Let~$\abxy$ be any edge-sequence of~$G^+$ where the edge~$ab$ appears earlier than the edge~$xy$. 
For our new \emph{switching operation on edge-sequences}, it is natural to consider the following two edge-sequences of $G^-$: 
(i)~the edge-sequence obtained by replacing~$ab,xy$ with~$ax,by$ in that order, which we call $\axby$, 
and (ii)~the one obtained by replacing~$ab,xy$ with~$by,ax$ in that order, which we call~$\byax$. 
To complete the symmetry, we also consider the edge-sequence of $G^+$ obtained by swapping the positions of~$ab,xy$ in~$\abxy$, which we call~$\xyab$.
By expanding~$\bP(\sigma)$ and carefully estimating the resulting formula (see~\eqref{def:Psigma}--\eqref{def:Zsigma} in \refS{sec:probabilities}), 
it turns out that one can prove the surprisingly clean
ratio~estimate
\begin{equation}\label{keywish}
\frac{\bP({\abxy})+\bP({\xyab})}{\bP({\axby})+\bP({\byax})}\ge 1,
\end{equation}
which already implies the desired probability ratio estimate~\eqref{wish} with~$\eps=0$ by noting that 
\begin{equation}\label{keywish:sum}
\bP\bigpar{\Gpdn=G^+}=\sum_{\abxy}\bigsqpar{\bP({\abxy})+\bP({\xyab})}\ge \sum_{\axby}\bigsqpar{\bP({\axby})+\bP({\byax})}= \bP\bigpar{\Gpdn=G^-}.
\end{equation}
With more work one can show that, for a positive proportion of the edge-sequences~$\abxy$, we have 
\begin{equation}\label{keywish2}
\frac{\bP({\abxy})+\bP({\xyab})}{\bP({\axby})+\bP({\byax})}\ge 1+\eps'
\end{equation}
for some constant~$\eps'>0$, 
which by similar reasoning as for inequality~\eqref{keywish:sum} then proves the desired probability ratio estimate~\eqref{wish} for an appropriately defined~$\eps>0$, which was our main goal.

For the interested reader, we now outline  how~\eqref{wish} implies \refT{transferred}~\ref{transferred:process}. 
To this end, let
\begin{equation}\label{def:Gell}
\cG_\ell :=\bigcpar{G \: : \: \text{graph with degree sequence }  \bdn \text{ and } \Xs(G)= \ell } .
\end{equation}
Note that `switching' the edges $ab,xy$ to $ax,by$ maps a graph from~$\cG_{\ell+1}$ to a graph in~$\cG_{\ell}$. 
When the number of small edges satisfies~$\ell\approx \mu$, 
then by counting the number of ways each graph in~$\cG_{\ell+1}$ can be mapped to a graph in~$\cG_{\ell}$, and vice versa, 
one can obtain the size ratio~estimate~$|\mathcal{G}_{\ell}|/|\mathcal{G}_{\ell+1}| \approx 1$ 
via a double counting argument that is standard for switching arguments. 
Combining these ideas with~\eqref{wish}, 
the crux is that by a more careful double counting argument one can
also obtain the probability ratio~estimate
\begin{equation}\label{simplecluster}
\frac{\bP\bigpar{X_k\bigpar{\Gpdn}=\ell}}{\bP\bigpar{X_k\bigpar{\Gpdn}=\ell+1}} = \frac{\sum_{F \in \cG_\ell}\bP\bigpar{\Gpdn=F}}{\sum_{H \in \cG_{\ell+1}}\bP\bigpar{\Gpdn=H}} \le 1-\tau
\end{equation}
for an appropriately defined~$\tau \in (0,1)$; 
in fact, estimate~\eqref{simplecluster} remains true as long as~${|\ell-\mu| \le \gamma \mu}$ for some sufficiently small~$\gamma>0$. 
For any~$0 \le z \le \gamma \mu$ we then obtain
the probability ratio~estimate
\begin{equation}\label{sec1:simplekey}
\frac{\bP\bigpar{\bigabs{X_k(\Gpdn)-\mu}\le z}}{\bP\bigpar{\bigabs{X_k\bigpar{\Gpdn}-\mu}\le z+1}}
 \le \frac{\sum_{\ell:|\ell-\mu|\le z}\bP\bigpar{X_k\bigpar{\Gpdn}=\ell}}{\sum_{\ell:|\ell-\mu|\le z}\bP\bigpar{X_k\bigpar{\Gpdn}=\ell+1}}  \le 1-\tau,
\end{equation}
and the proof of~inequality~\eqref{eq:transferred:process} 
follows readily for~$\beta :=\gamma/2$: by invoking~\eqref{sec1:simplekey} we have
\begin{equation}\label{simplefinishing}
\bP\bigpar{\bigabs{X_k\bigpar{\Gpdn}-\mu}\le \beta \mu} \le \prod_{0\le i\le \floor{\beta\mu}-1}\hspace{-0.125em}\frac{\bP\bigpar{ \bigabs{X_k\bigpar{\Gpdn}-\mu}\le \beta\mu+i}}{\bP\bigpar{ \bigabs{X_k\bigpar{\Gpdn}-\mu}\le \beta\mu+i+1}} \le \left(1-\tau\right)^{\floor{\beta\mu}} \le e^{-\Theta(n)},
\end{equation}  
where for the last inequality we used the fact that $\mu\geq (\xi n)^2/(2\D n) = \Theta(n)$.

\subsubsection{General case~$k \ge 1$: comparison of edge-sequences of sets of graphs}\label{generalstrategy}
With the benefit of hindsight, our switching argument for~$k=1$ can be summarized as follows.
We started with switching inequalities~\eqref{keywish} and~\eqref{keywish2} for edge-sequences, 
which we then sequentially `lifted' to the basic switching inequality~\eqref{wish} for graphs, 
then to the key   inequality~\eqref{simplecluster} for closely related sets of graphs, 
and finally to the desired inequality~\eqref{simplefinishing} for edge statistics. 
Our general argument for~$k \ge 1$ will go through a similar (albeit more complicated) lifting of~inequalities,
but there is one major obstacle: 
the basic  switching inequality~\eqref{wish} for graphs is not always true, even with~$\e=0$; see \refApp{counterexample} for a counterexample~with~${k=2}$.

Our strategy for overcoming this obstacle is based on the idea of averaging over suitable sets of graphs, called \emph{clusters} (see \refS{sec:cluster}). 
To this end, for any specific choice of our vertices $a,b,x,y$, we will partition the choices for $G^+$ (i.e., graphs containing edges $ab,xy$) into sets, which we call {\em upper clusters}.  For each upper cluster $C^+$, we also construct the corresponding set $C^-$ of the choices for $G^+$ obtained by replacing $ab,xy$ by $ax,by$ in each member of $C^+$; we call these sets {\em lower clusters}. 
We will essentially prove that~\eqref{wish} holds when averaging over graphs in the  corresponding clusters: 
namely, for every cluster pair~${C^+,C^-}$ we~obtain
\begin{equation}\label{cluster1}
\bP(C^+):=\sum_{G\in C^+} \bP(\Gpdn=G)\ge \sum_{G\in C^-} \bP(\Gpdn=G)=: \bP(C^-),
\end{equation}
and for a positive proportion of the cluster pairs~${C^+, C^-}$ we also obtain the stronger ratio estimate 
\begin{equation}\label{cluster2}
\frac{\bP(C^+)}{\bP(C^-)}\ge 1+\eps .
\end{equation}
for some constant~$\eps>0$ (see Lemmas~\ref{switch}--\ref{goodswitch} in \refS{sec:switchingcounting}).

To prove the probability ratio estimates~\eqref{cluster1}--\eqref{cluster2}, we will establish switching inequalities for edge-sequences analogous to~\eqref{keywish}--\eqref{keywish2} from the case~$k=1$, the main difference being as follows: while in~\eqref{keywish}--\eqref{keywish2} we matched two edge-sequences of~$G^+$ with two edge-sequences of $G^-$, for~\eqref{cluster1}--\eqref{cluster2} we shall match two edge-sequences of each graph in~$C^+$ with two edge-sequences of graphs in~$C^-$ (but not necessarily from the same graph; see~\refS{sec:switching}).
%

Finally, with estimates~\eqref{cluster1}--\eqref{cluster2} in hand, we can then  prove Theorem~\ref{transferred}~\ref{transferred:process} similarly to inequalities~\mbox{\eqref{simplecluster}--\eqref{simplefinishing}} from the case~$k=1$, but there is another difference 
that makes the details more complicated: 
whereas the graph switching inequality~\eqref{wish} holds for every pair of graphs~$G^+,G^-$, 
the cluster switching inequality~\eqref{cluster2} only holds for some pairs of clusters~$C^+,C^-$ (see~\refS{sec:doublecounting} for the details). 

\subsubsection{Another look at our switching argument}\label{sec:switching:comp}
We close this proof strategy subsection with a high-level discussion of how our switching argument compares to previous switching arguments applied to uniform random models, 
bearing in mind that the switching inequality~\eqref{simplecluster} is at the core of the argument leading to the telescoping product of ratios estimate~\eqref{simplefinishing}. 

In particular, if we were analyzing uniform random graphs~$\Gdn$ with degree sequence $\bdn$, then in view of~${\bP(\Gdn=F)}={\bP(\Gdn=H)}$ 
the left-hand side of the corresponding key-inequality~\eqref{simplecluster} would reduce to~$|\cG_{\ell}|/|\cG_{\ell+1}|$.
Obtaining tight asymptotic bounds on such ratios of closely related set-sizes is usually the main step in switching papers  
(see, e.g.~\cite{McKay1984, GodsilMcKay1984, McKayWormald1991, Wreg, GMX2006, GW2016}, which also use different 
switching~operations).

By contrast, in our analysis of the random  \mbox{$\bdn$-process} we nearly automatically have~$|\cG_{\ell}|/|\cG_{\ell+1}| \approx 1$, since we may focus on~$\ell \approx \mu$.
The main step in our proof of~\eqref{simplecluster} thus is to compare the average value of~$\Pr(\Gpdn=F)$ over graphs~$F\in\cG_{\ell}$ to the average of $\Pr(\Gpdn=H)$ over graphs~$H\in\cG_{\ell+1}$, 
which (together with the idea of looking at edge-sequences, i.e., trajectories of the \mbox{$\bdn$-process}) is at the heart of our use~of~switchings.

\section{Relaxed $\bdn$-process: main technical result}\label{sec:maintechnical:sec}
In this section we state our main technical result \refT{main} for a relaxed variant of the $d_n$-process, which allows for multiple edges but no loops. 
Of course, this technical result is formulated in a way that will imply our main result \refT{transferred}~\ref{transferred:process} for the standard $d_n$-process, see~\refS{sec:maintechnical} and~\refApp{sec:transfer}.  
Using the relaxed \mbox{$d_n$-process} for the proofs has two notable technical advantages: 
(a)~by allowing multiple edges, in the switching arguments we do not have to worry about switching to an edge that already exists, and 
(b)~by forbidding loops, we obtain a simpler and more tractable formula\footnote{Note that if we allowed for loops, then the degree-constraint would still forbid adding loops at vertices~$v$
whose current degree is~$\deg(v)=d^{(n)}_v-1$, which would considerably complicate the probability formulas in~\refS{sec:probabilities}.}
 for the probabilities of the process.

\subsection{Definition: configuration-graphs}\label{sec:configurationgraph}
Inspired by the configuration model~\cite{BB1980,Wreg,FK}, in this subsection we define a \textit{configuration-graph} with degree sequence~$\bdn=\bigpar{d_1^{(n)},\dots, d_n^{(n)}}$ as follows.  
For each vertex $v_i$ in a graph we imagine a bin containing $d_i^{(n)}$ labeled points. By a configuration-graph~$G$ with degree sequence~$\bdn$ we mean a perfect matching of all points where we do not allow points from the same vertex to be matched. Formally, $G$ is thus a perfect matching on the set of~points
\[ 
P(G) :=\bigcpar{v_i^j \: : \: i \in [n] \text{ and } 1\le j \le d^{(n)}_i },
\]
subject to having no matched pairs of the form~${v_i^jv_i^\ell}$ (to avoid creating loops).

As in the configuration model, a configuration-graph~$G$ naturally represents a multigraph~$\pi(G)$ with degree sequence~$\bdn$, obtained by contracting the set of points 
\[
V_i := \bigcpar{v_i^j \: : \: 1\le j\le d^{(n)}_i}
\] 
to vertex~$v_i$ for each~$i \in [n]$. 
Note that the multigraph~$\pi(G)$ may contain multiple edges but not loops. 

With the multigraph representation in mind, we say that~$P(G)$ are \emph{points} of~$G$, while the sets~$V_i$ with~${i \in [n]}$  
are \emph{vertices} of the configuration-graph~$G$. 
Furthermore, the \emph{degree of a vertex} in a configuration-graph is its size, so that vertex~$V_i$ has degree~$\deg(V_i):=|V_i|=d^{(n)}_i$. 
We also say that the \emph{degree of a point} in a configuration-graph is the size of the vertex that contains it, 
so that point~$w \in V_i$ has degree~$\deg(w):=|V_i|=d^{(n)}_i$. 
Hence, as with graphs,  $\Xs(G)$ is the number of edges whose endpoints are of degree at most $k$, meaning that the endpoints are contained in vertices of size at most~$k$.

\subsection{Definition: relaxed $\bdn$-process}\label{sec:relaxed}
In this subsection we formally define our relaxed $\bdn$-process, which attempts to randomly generate a configuration-graph with degree sequence~$\bdn=\bigpar{d_1^{(n)},\dots, d_n^{(n)}}$.  
At any point during the process, we say that a vertex~$V_i$ is \emph{unsaturated} if there is at least one point~$w \in V_i$ that is currently not matched. 
%
\vspace{-1mm}
\begin{algorithm}[H]
    \caption{Sampling random partial configuration-graph with relaxed $\bdn$-process}
    \label{euclid}
    \begin{algorithmic}[1] 
            \State $G:=\bigpar{P(G), E(G)}$ with~$E(G):=\emptyset$ 
            \While{there are at least 2 unsaturated vertices in $G$}
                \State Pick a uniformly random pair~$V_i, V_j$ of distinct unsaturated vertices %
                \State Pick uniformly random unmatched points~$v_i^{p}\in V_i$ and~$v_j^{q}\in V_j$
                \State Add the edge~$v_i^{p}v_j^{q}$ to~$E(G)$ 
            \EndWhile\label{relaxedendwhile}
            \State \textbf{return} $G$
    \end{algorithmic}
\end{algorithm}
As with the (standard) $\bdn$-process, it is possible that the relaxed $\bdn$-process will not complete, i.e., not succeed in matching every point, and so the final configuration-graph will not have the desired degree sequence~$\bdn$. 
However, it turns out that this `bad behavior' happens with sufficiently small  probability (see~\refL{completionprob} in~\refApp{sec:transfer}), 
so that conditioning on the relaxed $\bdn$-process completing does not create any technical problems.
We henceforth write~$\Gpodn$ for the final configuration-graph of the relaxed $\bdn$-process 
conditioned on having degree sequence~$\bdn$, i.e., conditioned on completing
(this is well-defined, since we only consider degree sequences~$\bdn$ that are graphic).

\subsection{Main technical result: small edges in relaxed $\bdn$-process}\label{sec:maintechnical} 
Our main technical result is the following theorem, which is a reformulation of our main result \refT{transferred}~\ref{transferred:process} for the relaxed $d_n$-process. 
It intuitively shows that the number of small edges in $\Gpodn$ is typically far from~$\mu$, i.e., the typical number of small edges in~$\Gdn$ (see part~\ref{transferred:uniform} of~\refT{transferred}). 
\begin{theorem}[Main technical result]\label{main}
Suppose that assumptions of \refT{transferred} hold.
Then there is a constant~${\alpha=\alpha(\xi,\Delta)>0}$ such that 
the final graph~$\Gpodn$ of the relaxed $\bdn$-process satisfies 
\begin{equation}\label{eq:transferred:multiprocess}
\bP\bigpar{\bigabs{\Xs\bigpar{\Gpodn}-\mu} \le \alpha \mu} \le e^{-\Theta(n)}.
\end{equation}
\end{theorem}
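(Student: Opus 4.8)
The plan is to follow the strategy laid out in \refS{ssps}, carried out for the relaxed $\bdn$-process where the switching operation on configuration-graphs never fails (no pre-existing edge to worry about) and the probability formulas are cleanest. The backbone is a telescoping-product argument in the spirit of~\eqref{simplecluster}--\eqref{simplefinishing}: show that for every integer $\ell$ with $|\ell-\mu|\le\gamma\mu$ (with $\gamma=\gamma(\xi,\Delta)>0$ a small constant to be fixed), the ratio
\begin{equation*}
\frac{\bP\bigpar{\Xs(\Gpodn)=\ell}}{\bP\bigpar{\Xs(\Gpodn)=\ell+1}} \le 1-\tau
\end{equation*}
for some constant $\tau=\tau(\xi,\Delta)\in(0,1)$, and then telescope over $0\le z\le\gamma\mu$ exactly as in~\eqref{sec1:simplekey}--\eqref{simplefinishing} to get $\bP(|\Xs(\Gpodn)-\mu|\le\alpha\mu)\le(1-\tau)^{\lfloor\alpha\mu\rfloor}\le e^{-\Theta(n)}$ with $\alpha:=\gamma/2$, using $\mu=\Theta(n)$ from~\eqref{keyassumption} and~\eqref{def:mu}. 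The quantity $\mu$ here serves only to pin down the window; the point is that on that window one has the uniform one-step contraction.

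The heart of the matter, and the main obstacle, is the one-step ratio bound. The approach is: (i)~fix vertices $a,b,x,y$ with $\max\{\deg a,\deg b\}\le k<\min\{\deg x,\deg y\}$ and partition the configuration-graphs containing the edges $ab,xy$ into \emph{upper clusters} $C^+$, with matched \emph{lower clusters} $C^-$ obtained by the $ab,xy\mapsto ax,by$ switch, so that $\Xs$ drops by exactly one across each cluster pair (cf.~\eqref{eq:switch}); (ii)~for each cluster pair prove the averaged switching inequality~\eqref{cluster1}, and for a positive proportion of pairs the stronger~\eqref{cluster2}, by expanding $\bP(\Gpodn=G)=\sum_{\sigma\in\Pi_G}\bP(\sigma)$ over edge-sequences (trajectories) and matching, for each trajectory $\sigma_{ab,xy}$ of a graph in $C^+$ in which $ab$ precedes $xy$, the two trajectories $\sigma_{ax,by}$ and $\sigma_{by,ax}$ of graphs in $C^-$ together with the swapped trajectory $\sigma_{xy,ab}$ of $C^+$, and checking the clean inequality $\bP(\sigma_{ab,xy})+\bP(\sigma_{xy,ab})\ge\bP(\sigma_{ax,by})+\bP(\sigma_{by,ax})$ — valid here because the relaxed-process step probabilities have the tractable product form flagged in~\refS{sec:probabilities}; (iii)~a double-counting argument over the clusters (counting how many ways each graph in $\cG_{\ell+1}$ is hit by a switch, and the inverse count from $\cG_\ell$) lifts~\eqref{cluster1}--\eqref{cluster2} to the one-step ratio bound, using~\eqref{keyassumption} to guarantee that a constant fraction of the relevant switches land in the "strong" cluster pairs.

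The delicate points are that~\eqref{wish} fails pointwise for $k\ge2$ (see \refApp{counterexample}), which is precisely why clustering and averaging are forced on us, and that the clusters must be defined so that (a)~the switch is a bijection between $C^+$ and $C^-$, (b)~each cluster is small enough that the internal trajectory-matching inequality is provable yet large enough that the "bad" configurations are diluted by "good" ones, and (c)~the double-counting multiplicities are controlled to within a $1+o(1)$ factor uniformly over $\ell$ in the window. I expect step~(ii) — engineering the cluster definition so that the per-trajectory inequality holds after averaging, and bookkeeping the trajectory probabilities of the relaxed process through a switch — to be the main technical burden; this is where the bulk of Sections~\ref{setupandcore}--\ref{sec:counting} will go. Finally, the passage from the relaxed process back to the standard $\bdn$-process (and the conditioning on completion) is routine given \refL{completionprob}, and is deferred to \refApp{sec:transfer}.
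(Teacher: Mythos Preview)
Your high-level plan matches the paper's --- cluster, match trajectories, double-count, telescope --- but two concrete points in your outline would not go through as written.

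First, the four-term trajectory inequality you assert in step~(ii), namely $\bP(\sigma_{ab,xy})+\bP(\sigma_{xy,ab})\ge\bP(\sigma_{ax,by})+\bP(\sigma_{by,ax})$, does \emph{not} hold for every edge-sequence when $k\ge2$: it requires the side condition $t_a\le t_y$ and $t_b\le t_x$, where $t_v$ is the last step at which a point of $V\setminus\{v\}$ is matched (see \refL{completeswitch}). The paper handles the remaining edge-sequences by a separate pairing called \emph{twins} (\refL{twins}): for each $\sigma\in\cP^+$ violating those conditions one constructs a $\sigma_2\in\cP^-$ with $Z(\sigma)=Z(\sigma_2)$ by swapping certain points of $B$ with points of $X$ (or of $A$ with $Y$), so that these bad trajectories cancel exactly before the four-term inequality is applied to the survivors. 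This twins layer is not a matter of ``engineering the cluster definition'' --- it sits on top of the already-fixed cluster partition and is the essential new ingredient beyond the $k=1$ argument you have sketched.

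Second, your telescoping is set up on level sets $\cG_\ell$ versus $\cG_{\ell+1}$, but an upper cluster $C^+$ need not lie in a single $\cG_\ell$: graphs within one cluster can differ in up to $4\Delta$ edges incident to $A,B,X,Y$, hence in their $\Xs$-values (\refO{obs4D}). The cluster inequality $Z(C^+)\ge Z(C^-)$ therefore does not lift to a one-step level-set ratio. The paper instead works with the bands $\cN_z=\{G:|\Xs(G)-\mu|\le z\}$ and proves $Z(\cN_z)\le(1-\tau)Z(\cN_{z+4\Delta})$; the double-counting in \refS{sec:doublecounting} compares $\sum_G L_G Z(G)$ over $\cN_z$ against $\sum_G U_G Z(G)$ over $\cN_{z+4\Delta}$, using \refL{coefficients} for the near-constant multiplicities and a case split (via \refL{goodpermutation}) on whether enough clusters are \emph{good} to invoke the strict version \refL{goodswitch}.
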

As the reader can guess, \refT{main} for the relaxed \mbox{$d_n$-process} implies \refT{transferred}~\ref{transferred:process} for the standard \mbox{$d_n$-process}~$\Gpdn$. 
While this kind of transfer from multigraphs to simple graphs is conceptually standard (much in the spirit of the standard argument for the configuration model), it turns out that the transfer argument for the $d_n$-process is more involved than usual; hence we defer the technical details to \refApp{sec:transfer}.
To sum up: in order to establish \refT{transferred}~\ref{transferred:process}, 
it remains to prove~\refT{main} in the following Sections~\ref{setupandcore}--\ref{sec:counting}. 
\begin{remark}\label{rem:lowertail}
While Theorem~\ref{main} is enough to prove our main result, 
by combining~\eqref{eq:transferred:multiprocess} with the lower tail result Theorem~\ref{location} from \refApp{sec:lowertail} (with~$\eps=\alpha$) we obtain 
the stronger bound  ${\bP\bigpar{\Xs\bigpar{\Gpodn} \le (1+\alpha)\mu}} \le e^{-\Theta(n)}$. 
Hence, with high probability, the $\bdn$-process indeed contains significantly more small edges than~$\Gdn$ (as suggested by the heuristics in \refS{sec:mainresult}). 
\end{remark}

\section{Core switching arguments}\label{setupandcore} 
This section is devoted to the switching-based proof of our main technical result~\refT{main}, which concerns the number of small edges in the relaxed $\bdn$-process.
Namely, in Sections~\ref{sec:probabilities}--\ref{sec:cluster} we introduce the setup and main switching definitions,
and in Section~\ref{sec:switchingcounting} then state several key switching and counting results, 
which in turn are used to prove~\refT{main} in~\refS{sec:doublecounting}.

\subsection{Preliminaries: probabilities in relaxed $\bdn$-process}\label{sec:probabilities} 
For our upcoming switching arguments we need to be able to compare the probabilities with which the relaxed $\bdn$-process produces certain configuration-graphs. 
Here our basic approach is to expand these probabilities, by considering all possible edge-sequences that produce the configuration-graphs (as mentioned in \refS{simplestrategy}).

Turning to the details, let~$G$ be any configuration-graph with degree sequence~$\bdn$, and let~$\s$ be any ordering of its edges (to clarify: the endpoints of the edges of a configuration-graph consist of points, see~\refS{sec:configurationgraph}). 
Consider building $G$ by adding its edges one at a time, ordered by~$\s$. 
Defining 
\begin{equation}\label{def:Gamma_i_sigma}
\Gamma_i(\sigma) := \mbox{number of unsaturated vertices remaining after adding the first~$i$ edges},
\end{equation}
the probability that the relaxed $\bdn$-process chooses the edges of $G$ in order $\sigma$ is easily seen to be
\begin{equation}\label{def:Psigma}
\prod_{0 \le i \le m-1} \frac{2}{\Gamma_i(\sigma)(\Gamma_i(\sigma)-1)} \cdot \prod_{1 \le j \le n} \frac{1}{d^{(n)}_j!}  .
\end{equation}
If we condition on the process completing (i.e., that the final configuration-graph has degree sequence~$\bdn$), then the probability that the relaxed $\bdn$-process chooses the edge-sequence~$\sigma$ is thus proportional~to
\begin{equation}\label{def:Zsigma}
Z(\sigma) := \prod_{0 \le i \le m-1} \frac{2}{\Gamma_i(\sigma)(\Gamma_i(\sigma)-1)}.
\end{equation}
Therefore~$\bP\bigpar{\Gpodn=G}$, the probability that the (conditional) relaxed $\bdn$-process produces~$G$, is proportional~to
\begin{equation}\label{def:ZG}
Z(G):=\sum_{\sigma \in \Pi_G}Z(\sigma) ,
\end{equation}
where we sum over the set~$\Pi_G$ of all edge-sequences of~$G$. 
When summing over a set~$S$ of configuration-graphs~$S$, we henceforth also use the convenient shorthand
\begin{equation}\label{def:ZS}
Z(S):=\sum_{G\in S} Z(G) .
\end{equation}

For later reference, we now record two simple bounds on the number~$\Gamma_i(\sigma)$ of unsaturated vertices.
\begin{observation}\label{obs:unsaturated}
We have $2(m-i)/\Delta \le \Gamma_i(\sigma) \le 2(m-i)$ for all~$0 \le i \le m$. 
\end{observation}
\begin{proof}
For any vertex~$V_j$, let~$\deg_{i}(V_j)$ denote the number of matched points in~$V_j$ after adding the first~$i$~edges 
Since~$\Gamma_i(\sigma)$ counts the number of~$j \in [n]$ with $\deg_{i}(V_j) \le {d_j^{(n)}-1}$, 
using~$0 \le \deg_{i}(V_j) \le d_j^{(n)} \le \Delta$ we infer~that
\[
\Gamma_i(\sigma) 
\le \sum_{j \in [n]} \Bigsqpar{d_j^{(n)} -\deg_{i}(V_j)} \le \Delta \cdot \Gamma_i(\sigma) .
\]
Noting that~$\sum_{j \in [n]} d_j^{(n)}=2m$ and~$\sum_{j \in [n]} \deg_{i}(V_j)=2i$ then establishes the claimed bounds. 
\end{proof}

\subsection{Definition: clusters of configuration-graphs}\label{sec:cluster}
In this subsection we introduce sets of configuration-graphs called \emph{clusters}, 
whose probabilities we shall compare in our upcoming switching arguments (as mentioned in~\refS{generalstrategy}).
To motivate their definition, in~\refS{sec:clustermotivation} we develop some intuition about which configuration-graphs are more likely to be produced by the relaxed $\bdn$-process.
The reader mainly interested in the formal definition of clusters may wish to directly skip to~\refS{sec:clusterdefinition}

\subsubsection{Intuition about configuration-graphs}\label{sec:clustermotivation}
In order to discuss our intuition about which configuration-graphs the relaxed $\bdn$-process favors, 
we first recall the switching setup from \refS{ssps}.  
We have a configuration-graph~$G^+$ with edges~$ab, xy$  that satisfy ${\deg(a),\deg(b)\le k}$ and ${\deg(x),\deg(y)>k}$ 
(recall that~${a,b,x,y}$ are points, and that the degree of a point is the size of the vertex that contains it). 
We let~${A,B,X,Y}$ denote the vertices containing points~${a,b,x,y}$ respectively, where we henceforth assume~${A\neq B}$ and~${X\neq Y}$. 
Furthermore, we write~$G^-$ for the configuration-graph obtained by replacing the edges~$ab, xy$ with~$ax, by$; see~\refF{fig:switching} for an example with~$k=1$.  

As discussed in \refS{ssps}, ideally the relaxed $\bdn$-process should produce $G^+$ with higher probability than~$G^-$, 
which by \refS{sec:probabilities} means that~$Z(G^+)\geq Z(G^-)$. 
It turns out that this is true for~$k=1$, but \emph{not} always true for~$k \ge 2$.
To gain some intuition why, we wish to understand what configuration-graphs~$G$ 
have high values of~$Z(G)=\sum_{\sigma \in \Pi_G}Z(\sigma)$. 
By carefully inspecting~\eqref{def:Zsigma}, it is not hard to see that~$Z(\s)$ tends to be big when the values~$\G_i(\s)$ tend to be small, i.e., when vertices tend to become saturated~early. 
%
Furthermore, 
the impact on~$Z(\s)$ of a vertex  becoming saturated early is amplified if other vertices are also saturated early. 

With these `early saturation' observations in mind, we now consider the case~$k=1$, i.e., where $\deg(A)=\deg(B)=1$ holds.
If the vertices~$A,B$ form the edge~$ab$, then they will both become saturated at the same step, namely the step where that edge appears in the edge-sequence~$\s$.  So in edge-sequences where~$A$ is saturated early, we automatically get the amplification of a second vertex becoming saturated at the same step.  Of course, in edge-sequences where~$A$ is saturated late then~$B$ is also saturated late.  But it turns out that the multiplicative nature of the formula for~$Z$ results in the good edge-sequences outweighing the bad ones. 
After summing over all edge-sequences, 
it thus becomes plausible that~${Z(G^+)> Z(G^-)}$. 

For $k\geq2$, the reasoning is not as clean-cut since the edge~$ab$ does not ensure that~$A,B$ both become saturated at the same step.  Nevertheless, it increases the probability that, when~$A$ is saturated then~$B$ will be saturated soon (and vice versa). Furthermore, the described increase in probability is intuitively higher for the edge~$ab$ than for the edge~$ax$, because the degree of~$B$ is lower than~$X$ (so fewer other points need to be selected until saturation). 
Hence joining~$ab$ seemingly brings a similar, albeit smaller, amplification benefit as doing the same in the case~$k=1$.
%
%
%
However, the behavior of the $\bdn$-process turns out to be more complicated:  
other neighbours of $A,B$ can also significantly impact~$Z$, and in certain situations this `neighborhood effect' can even be stronger than the benefit 
of~$A$ being adjacent to~$B$. 
For example, it could be better for~$A$ to be adjacent to a vertex~$X$ with slightly higher degree than~$B$, 
provided that  the neighbors of~$X$ have significantly lower degrees than the neighbors of~$B$ 
(since this intuitively gives the amplified benefit of several neighbors of~$X$ becoming saturated at around the same step that $A$ and $X$ are).   
\refF{fig:counter} in~\refApp{counterexample} contains an example 
where this happens: we have~$\deg(X)=\deg(B)+1$, but~$X$ is adjacent to more low degree vertices than~$B$, and we indeed have~$Z(G^+)< Z(G^-)$. 
Note that the described problematic `neighborhood effect' cannot occur for~$k=1$, since when~$A,B$ are adjacent, then they have no other neighbors 
(which partially explains why the case~$k=1$ is more~well-behaved; see also \refR{rem:k1:simpler} in~\refS{sec:switching}). 

To overcome the discussed obstacle, we have developed a more complicated form of switching. 
Instead of measuring the effect on~$Z$ of switching edges of a single configuration-graph, 
we will measure the average effect of switching on a set of configuration-graphs, which we call a {\em cluster}. 
These clusters are specifically chosen so that, 
when averaging over a cluster,
the neighbors of~$X$ will not be any better than the neighbors of~$B$, 
and so the described problematic `neighborhood effect' 
will not occur. But the amplifying effect from the edge~$ab$ still remains, and so~$Z$  really will be~larger on~average in~$G^+$ than in~$G^-$ (see~\refL{switch} in~\refS{sec:switchingcounting}).

\subsubsection{Formal definition of clusters}\label{sec:clusterdefinition}
With an eye on introducing clusters, for any configuration-graph~$G$ with a set of points~$S$, we write 
\begin{equation}\label{def:NG}
N_G(S):=\bigcpar{v\text{ point in }G \: : \: v\not\in S \text{ and there is } w\in S \text{ such that } vw \in E(G)}
\end{equation}
for the set of neighbors of~$S$.  Often~$S$ will be specified as a vertex, or a collection of vertices; this means~$S$ will be the set of points of these vertices.  So $N_G(S)$ will not be the set of vertices adjacent to~$S$, but the specific points within those vertices that are matched in~$G$ to the points of~$S$.
We are now ready to formally define clusters, which are illustrated in~\refF{clusterexample}. 
Recall that~${A,B,X,Y}$ are distinct vertices with~${\deg(A),\deg(B)\leq k}$ and~${\deg(X),\deg(Y)>k}$, 
and that~${a,b,x,y}$ are points of~${A,B,X,Y}$, respectively.
{\pagebreak[3]\begin{definition}[Clusters] \ \vspace{-0.5em}
\begin{enumerate}[(a)]
\itemindent 0.125em \itemsep 0.125em \parskip 0.125em 
\item $G_{ab,xy}$ is the set of configuration-graphs with degree sequence~$\bdn$ that contain the two edges~$ab, xy$. 
Similarly, $G_{ax,by}$ is the set of configuration-graphs with degree sequence~$\bdn$ that contain the two edges~$ax, by$. 
\item We define an equivalence relation $\sim$ on $G_{ab,xy}$ and $G_{ax,by}$,
 where~${G_1 \sim G_2}$ if~${N_{G_1}(A\cup B\cup X\cup Y)}={N_{G_2}(A\cup B\cup X\cup Y)}$ holds
 and the set of edges not incident to ${A, B, X, Y}$ are identical. 
\item The equivalence classes of $\sim$ on  $G_{ab,xy}$ are called {\em upper~clusters}. The equivalence classes of $\sim$ on $G_{ax,by}$ are called {\em lower~clusters}.
\end{enumerate}
\end{definition}}
%
%
\begin{figure}%
\centering%
\begin{tikzpicture}
\begin{pgfonlayer}{nodelayer}
\node [style=none] (99) at (1, 1.3) {};
\node [style=none] (100) at (2, 1.3) {};
\node [style=none] (101) at (0.78, 1.3) {};
\node [style=none] (102) at (0.4, 0.5) {};
\node [style=black] (103) at (0.495, 0.7) {};
\node [style=black] (104) at (1.334, 1.3) {};
\node [style=black] (105) at (0.59, 0.9) {};
\node [style=black] (106) at (0.685, 1.1) {};
\node [style=black] (107) at (1.666, 1.3) {};
\node [style=none] (108) at (2.22, 1.3) {};
\node [style=none] (109) at (2.6, 0.5) {};
\node [style=black] (110) at (2.41, 0.9) {};
\node [style=none] (111) at (1, -0.5) {};
\node [style=none] (112) at (2, -0.5) {};
\node [style=none] (113) at (0.78, -0.5) {};
\node [style=none] (114) at (0.4, 0.3) {};
\node [style=black] (115) at (0.495, 0.1) {};
\node [style=black] (116) at (1.334, -0.5) {};
\node [style=black] (117) at (0.59, -0.1) {};
\node [style=black] (118) at (0.685, -0.3) {};
\node [style=black] (119) at (1.666, -0.5) {};
\node [style=none] (120) at (2.22, -0.5) {};
\node [style=none] (121) at (2.6, 0.3) {};
\node [style=black] (122) at (2.41, -0.1) {};
\node [style=none] (131) at (4, 1.3) {};
\node [style=none] (132) at (5, 1.3) {};
\node [style=none] (133) at (3.78, 1.3) {};
\node [style=none] (134) at (3.4, 0.5) {};
\node [style=black] (135) at (3.495, 0.7) {};
\node [style=black] (136) at (4.334, 1.3) {};
\node [style=black] (137) at (3.59, 0.9) {};
\node [style=black] (138) at (3.685, 1.1) {};
\node [style=black] (139) at (4.666, 1.3) {};
\node [style=none] (140) at (5.22, 1.3) {};
\node [style=none] (141) at (5.6, 0.5) {};
\node [style=black] (142) at (5.41, 0.9) {};
\node [style=none] (143) at (4, -0.5) {};
\node [style=none] (144) at (5, -0.5) {};
\node [style=none] (145) at (3.78, -0.5) {};
\node [style=none] (146) at (3.4, 0.3) {};
\node [style=black] (147) at (3.495, 0.1) {};
\node [style=black] (148) at (4.334, -0.5) {};
\node [style=black] (149) at (3.59, -0.1) {};
\node [style=black] (150) at (3.685, -0.3) {};
\node [style=black] (151) at (4.666, -0.5) {};
\node [style=none] (152) at (5.22, -0.5) {};
\node [style=none] (153) at (5.6, 0.3) {};
\node [style=black] (154) at (5.41, -0.1) {};
\node [style=none] (163) at (7, 1.3) {};
\node [style=none] (164) at (8, 1.3) {};
\node [style=none] (165) at (6.78, 1.3) {};
\node [style=none] (166) at (6.4, 0.5) {};
\node [style=black] (167) at (6.495, 0.7) {};
\node [style=black] (168) at (7.334, 1.3) {};
\node [style=black] (169) at (6.59, 0.9) {};
\node [style=black] (170) at (6.685, 1.1) {};
\node [style=black] (171) at (7.666, 1.3) {};
\node [style=none] (172) at (8.22, 1.3) {};
\node [style=none] (173) at (8.6, 0.5) {};
\node [style=black] (174) at (8.41, 0.9) {};
\node [style=none] (175) at (7, -0.5) {};
\node [style=none] (176) at (8, -0.5) {};
\node [style=none] (177) at (6.78, -0.5) {};
\node [style=none] (178) at (6.4, 0.3) {};
\node [style=black] (179) at (6.495, 0.1) {};
\node [style=black] (180) at (7.334, -0.5) {};
\node [style=black] (181) at (6.59, -0.1) {};
\node [style=black] (182) at (6.685, -0.3) {};
\node [style=black] (183) at (7.666, -0.5) {};
\node [style=none] (184) at (8.22, -0.5) {};
\node [style=none] (185) at (8.6, 0.3) {};
\node [style=black] (186) at (8.41, -0.1) {};
\node [style=none] (195) at (1, -1.7) {};
\node [style=none] (196) at (2, -1.7) {};
\node [style=none] (197) at (0.78, -1.7) {};
\node [style=none] (198) at (0.4, -2.5) {};
\node [style=black] (199) at (0.495, -2.3) {};
\node [style=black] (200) at (1.334, -1.7) {};
\node [style=black] (201) at (0.59, -2.1) {};
\node [style=black] (202) at (0.685, -1.9) {};
\node [style=black] (203) at (1.666, -1.7) {};
\node [style=none] (204) at (2.22, -1.7) {};
\node [style=none] (205) at (2.6, -2.5) {};
\node [style=black] (206) at (2.41, -2.1) {};
\node [style=none] (207) at (1, -3.5) {};
\node [style=none] (208) at (2, -3.5) {};
\node [style=none] (209) at (0.78, -3.5) {};
\node [style=none] (210) at (0.4, -2.7) {};
\node [style=black] (211) at (0.495, -2.9) {};
\node [style=black] (212) at (1.334, -3.5) {};
\node [style=black] (213) at (0.59, -3.1) {};
\node [style=black] (214) at (0.685, -3.3) {};
\node [style=black] (215) at (1.666, -3.5) {};
\node [style=none] (216) at (2.22, -3.5) {};
\node [style=none] (217) at (2.6, -2.7) {};
\node [style=black] (218) at (2.41, -3.1) {};
\node [style=none] (227) at (7, -1.7) {};
\node [style=none] (228) at (8, -1.7) {};
\node [style=none] (229) at (6.78, -1.7) {};
\node [style=none] (230) at (6.4, -2.5) {};
\node [style=black] (231) at (6.495, -2.3) {};
\node [style=black] (232) at (7.334, -1.7) {};
\node [style=black] (233) at (6.59, -2.1) {};
\node [style=black] (234) at (6.685, -1.9) {};
\node [style=black] (235) at (7.666, -1.7) {};
\node [style=none] (236) at (8.22, -1.7) {};
\node [style=none] (237) at (8.6, -2.5) {};
\node [style=black] (238) at (8.41, -2.1) {};
\node [style=none] (239) at (7, -3.5) {};
\node [style=none] (240) at (8, -3.5) {};
\node [style=none] (241) at (6.78, -3.5) {};
\node [style=none] (242) at (6.4, -2.7) {};
\node [style=black] (243) at (6.495, -2.9) {};
\node [style=black] (244) at (7.334, -3.5) {};
\node [style=black] (245) at (6.59, -3.1) {};
\node [style=black] (246) at (6.685, -3.3) {};
\node [style=black] (247) at (7.666, -3.5) {};
\node [style=none] (248) at (8.22, -3.5) {};
\node [style=none] (249) at (8.6, -2.7) {};
\node [style=black] (250) at (8.41, -3.1) {};
\node [style=none] (259) at (4, -1.7) {};
\node [style=none] (260) at (5, -1.7) {};
\node [style=none] (261) at (3.78, -1.7) {};
\node [style=none] (262) at (3.4, -2.5) {};
\node [style=black] (263) at (3.495, -2.3) {};
\node [style=black] (264) at (4.334, -1.7) {};
\node [style=black] (265) at (3.59, -2.1) {};
\node [style=black] (266) at (3.685, -1.9) {};
\node [style=black] (267) at (4.666, -1.7) {};
\node [style=none] (268) at (5.22, -1.7) {};
\node [style=none] (269) at (5.6, -2.5) {};
\node [style=black] (270) at (5.41, -2.1) {};
\node [style=none] (271) at (4, -3.5) {};
\node [style=none] (272) at (5, -3.5) {};
\node [style=none] (273) at (3.78, -3.5) {};
\node [style=none] (274) at (3.4, -2.7) {};
\node [style=black] (275) at (3.495, -2.9) {};
\node [style=black] (276) at (4.334, -3.5) {};
\node [style=black] (277) at (3.59, -3.1) {};
\node [style=black] (278) at (3.685, -3.3) {};
\node [style=black] (279) at (4.666, -3.5) {};
\node [style=none] (280) at (5.22, -3.5) {};
\node [style=none] (281) at (5.6, -2.7) {};
\node [style=black] (282) at (5.41, -3.1) {};
\node [style=none] (291) at (1, -4.7) {};
\node [style=none] (292) at (2, -4.7) {};
\node [style=none] (293) at (0.78, -4.7) {};
\node [style=none] (294) at (0.4, -5.5) {};
\node [style=black] (295) at (0.495, -5.3) {};
\node [style=black] (296) at (1.334, -4.7) {};
\node [style=black] (297) at (0.59, -5.1) {};
\node [style=black] (298) at (0.685, -4.9) {};
\node [style=black] (299) at (1.666, -4.7) {};
\node [style=none] (300) at (2.22, -4.7) {};
\node [style=none] (301) at (2.6, -5.5) {};
\node [style=black] (302) at (2.41, -5.1) {};
\node [style=none] (303) at (1, -6.5) {};
\node [style=none] (304) at (2, -6.5) {};
\node [style=none] (305) at (0.78, -6.5) {};
\node [style=none] (306) at (0.4, -5.7) {};
\node [style=black] (307) at (0.495, -5.9) {};
\node [style=black] (308) at (1.334, -6.5) {};
\node [style=black] (309) at (0.59, -6.1) {};
\node [style=black] (310) at (0.685, -6.3) {};
\node [style=black] (311) at (1.666, -6.5) {};
\node [style=none] (312) at (2.22, -6.5) {};
\node [style=none] (313) at (2.6, -5.7) {};
\node [style=black] (314) at (2.41, -6.1) {};
\node [style=none] (323) at (4, -4.7) {};
\node [style=none] (324) at (5, -4.7) {};
\node [style=none] (325) at (3.78, -4.7) {};
\node [style=none] (326) at (3.4, -5.5) {};
\node [style=black] (327) at (3.495, -5.3) {};
\node [style=black] (328) at (4.334, -4.7) {};
\node [style=black] (329) at (3.59, -5.1) {};
\node [style=black] (330) at (3.685, -4.9) {};
\node [style=black] (331) at (4.666, -4.7) {};
\node [style=none] (332) at (5.22, -4.7) {};
\node [style=none] (333) at (5.6, -5.5) {};
\node [style=black] (334) at (5.41, -5.1) {};
\node [style=none] (335) at (4, -6.5) {};
\node [style=none] (336) at (5, -6.5) {};
\node [style=none] (337) at (3.78, -6.5) {};
\node [style=none] (338) at (3.4, -5.7) {};
\node [style=black] (339) at (3.495, -5.9) {};
\node [style=black] (340) at (4.334, -6.5) {};
\node [style=black] (341) at (3.59, -6.1) {};
\node [style=black] (342) at (3.685, -6.3) {};
\node [style=black] (343) at (4.666, -6.5) {};
\node [style=none] (344) at (5.22, -6.5) {};
\node [style=none] (345) at (5.6, -5.7) {};
\node [style=black] (346) at (5.41, -6.1) {};
\node [style=none] (355) at (7, -4.7) {};
\node [style=none] (356) at (8, -4.7) {};
\node [style=none] (357) at (6.78, -4.7) {};
\node [style=none] (358) at (6.4, -5.5) {};
\node [style=black] (359) at (6.495, -5.3) {};
\node [style=black] (360) at (7.334, -4.7) {};
\node [style=black] (361) at (6.59, -5.1) {};
\node [style=black] (362) at (6.685, -4.9) {};
\node [style=black] (363) at (7.666, -4.7) {};
\node [style=none] (364) at (8.22, -4.7) {};
\node [style=none] (365) at (8.6, -5.5) {};
\node [style=black] (366) at (8.41, -5.1) {};
\node [style=none] (367) at (7, -6.5) {};
\node [style=none] (368) at (8, -6.5) {};
\node [style=none] (369) at (6.78, -6.5) {};
\node [style=none] (370) at (6.4, -5.7) {};
\node [style=black] (371) at (6.495, -5.9) {};
\node [style=black] (372) at (7.334, -6.5) {};
\node [style=black] (373) at (6.59, -6.1) {};
\node [style=black] (374) at (6.685, -6.3) {};
\node [style=black] (375) at (7.666, -6.5) {};
\node [style=none] (376) at (8.22, -6.5) {};
\node [style=none] (377) at (8.6, -5.7) {};
\node [style=black] (378) at (8.41, -6.1) {};
\node [style=none] (385) at (3, 2) {};
\node [style=none] (386) at (6, 2) {};
\node [style=none] (387) at (9, 2) {};
\node [style=none] (399) at (9, -4) {};
\node [style=none] (434) at (0, 2) {};
\node [style=none] (435) at (0, -7) {};
\node [style=none] (436) at (9, -7) {};
\node [style=none] (437) at (0, -4) {};
\node [style=none] (438) at (0, -1) {};
\node [style=none] (439) at (9, -1) {};
\node [style=none] (440) at (3, -7) {};
\node [style=none] (441) at (6, -7) {};
\node [style=none] (442) at (-6, 2) {};
\node [style=none] (443) at (-6, -7) {};
\node [style=none] (444) at (-4, -0.7) {};
\node [style=none] (445) at (-2, -0.7) {};
\node [style=none] (446) at (-4.44, -0.7) {};
\node [style=none] (447) at (-5.2, -2.3) {};
\node [style=black] (448) at (-5.01, -1.9) {};
\node [style=black] (449) at (-3.332, -0.7) {};
\node [style=black] (450) at (-4.82, -1.5) {};
\node [style=black] (451) at (-4.63, -1.1) {};
\node [style=black] (452) at (-2.67, -0.7) {};
\node [style=none] (453) at (-1.56, -0.7) {};
\node [style=none] (454) at (-0.8, -2.3) {};
\node [style=black] (455) at (-1.18, -1.5) {};
\node [style=none] (456) at (-4, -4.3) {};
\node [style=none] (457) at (-2, -4.3) {};
\node [style=none] (458) at (-4.44, -4.3) {};
\node [style=none] (459) at (-5.2, -2.7) {};
\node [style=black] (460) at (-5.01, -3.1) {};
\node [style=black] (461) at (-3.332, -4.3) {};
\node [style=black] (462) at (-4.82, -3.5) {};
\node [style=black] (463) at (-4.63, -3.9) {};
\node [style=black] (464) at (-2.67, -4.3) {};
\node [style=none] (465) at (-1.56, -4.3) {};
\node [style=none] (466) at (-0.8, -2.7) {};
\node [style=black] (467) at (-1.18, -3.5) {};
\node [style=none] (468) at (-5.5, 0.2) {$G$};
\node [style=none] (469) at (-2.67, -0.5) {$a$};
\node [style=none] (470) at (-3, -0.05) {$A$};
\node [style=none] (471) at (-1.18, -1.8) {$b$};
\node [style=none] (472) at (-0.5, -1.05) {$B$};
\node [style=none] (473) at (-5.2, -1.9) {$x$};
\node [style=none] (474) at (-5.5, -1.05) {$X$};
\node [style=none] (475) at (-5.2, -3.1) {$y$};
\node [style=none] (476) at (-5.5, -3.8) {$Y$};
\node [style=none] (477) at (-3.325, -4.5) {$v$};
\end{pgfonlayer}
\begin{pgfonlayer}{edgelayer}
\draw [bend left=90, looseness=0.75] (99.center) to (100.center);
\draw [bend right=90, looseness=0.75] (99.center) to (100.center);
\draw [bend left=90, looseness=0.75] (101.center) to (102.center);
\draw [bend right=90, looseness=0.75] (101.center) to (102.center);
\draw [bend left=90, looseness=0.75] (108.center) to (109.center);
\draw [bend right=90, looseness=0.75] (108.center) to (109.center);
\draw [bend right=90, looseness=0.75] (111.center) to (112.center);
\draw [bend left=90, looseness=0.75] (111.center) to (112.center);
\draw [bend right=90, looseness=0.75] (113.center) to (114.center);
\draw [bend left=90, looseness=0.75] (113.center) to (114.center);
\draw [bend right=90, looseness=0.75] (120.center) to (121.center);
\draw [bend left=90, looseness=0.75] (120.center) to (121.center);
\draw (107) to (110);
\draw (119) to (122);
\draw (103) to (115);
\draw [bend left=90, looseness=0.75] (131.center) to (132.center);
\draw [bend right=90, looseness=0.75] (131.center) to (132.center);
\draw [bend left=90, looseness=0.75] (133.center) to (134.center);
\draw [bend right=90, looseness=0.75] (133.center) to (134.center);
\draw [bend left=90, looseness=0.75] (140.center) to (141.center);
\draw [bend right=90, looseness=0.75] (140.center) to (141.center);
\draw [bend right=90, looseness=0.75] (143.center) to (144.center);
\draw [bend left=90, looseness=0.75] (143.center) to (144.center);
\draw [bend right=90, looseness=0.75] (145.center) to (146.center);
\draw [bend left=90, looseness=0.75] (145.center) to (146.center);
\draw [bend right=90, looseness=0.75] (152.center) to (153.center);
\draw [bend left=90, looseness=0.75] (152.center) to (153.center);
\draw (139) to (142);
\draw (151) to (154);
\draw (135) to (147);
\draw [bend left=90, looseness=0.75] (163.center) to (164.center);
\draw [bend right=90, looseness=0.75] (163.center) to (164.center);
\draw [bend left=90, looseness=0.75] (165.center) to (166.center);
\draw [bend right=90, looseness=0.75] (165.center) to (166.center);
\draw [bend left=90, looseness=0.75] (172.center) to (173.center);
\draw [bend right=90, looseness=0.75] (172.center) to (173.center);
\draw [bend right=90, looseness=0.75] (175.center) to (176.center);
\draw [bend left=90, looseness=0.75] (175.center) to (176.center);
\draw [bend right=90, looseness=0.75] (177.center) to (178.center);
\draw [bend left=90, looseness=0.75] (177.center) to (178.center);
\draw [bend right=90, looseness=0.75] (184.center) to (185.center);
\draw [bend left=90, looseness=0.75] (184.center) to (185.center);
\draw (171) to (174);
\draw (183) to (186);
\draw (167) to (179);
\draw [bend left=90, looseness=0.75] (195.center) to (196.center);
\draw [bend right=90, looseness=0.75] (195.center) to (196.center);
\draw [bend left=90, looseness=0.75] (197.center) to (198.center);
\draw [bend right=90, looseness=0.75] (197.center) to (198.center);
\draw [bend left=90, looseness=0.75] (204.center) to (205.center);
\draw [bend right=90, looseness=0.75] (204.center) to (205.center);
\draw [bend right=90, looseness=0.75] (207.center) to (208.center);
\draw [bend left=90, looseness=0.75] (207.center) to (208.center);
\draw [bend right=90, looseness=0.75] (209.center) to (210.center);
\draw [bend left=90, looseness=0.75] (209.center) to (210.center);
\draw [bend right=90, looseness=0.75] (216.center) to (217.center);
\draw [bend left=90, looseness=0.75] (216.center) to (217.center);
\draw (203) to (206);
\draw (215) to (218);
\draw (199) to (211);
\draw [bend left=90, looseness=0.75] (227.center) to (228.center);
\draw [bend right=90, looseness=0.75] (227.center) to (228.center);
\draw [bend left=90, looseness=0.75] (229.center) to (230.center);
\draw [bend right=90, looseness=0.75] (229.center) to (230.center);
\draw [bend left=90, looseness=0.75] (236.center) to (237.center);
\draw [bend right=90, looseness=0.75] (236.center) to (237.center);
\draw [bend right=90, looseness=0.75] (239.center) to (240.center);
\draw [bend left=90, looseness=0.75] (239.center) to (240.center);
\draw [bend right=90, looseness=0.75] (241.center) to (242.center);
\draw [bend left=90, looseness=0.75] (241.center) to (242.center);
\draw [bend right=90, looseness=0.75] (248.center) to (249.center);
\draw [bend left=90, looseness=0.75] (248.center) to (249.center);
\draw (235) to (238);
\draw (247) to (250);
\draw (231) to (243);
\draw [bend left=90, looseness=0.75] (259.center) to (260.center);
\draw [bend right=90, looseness=0.75] (259.center) to (260.center);
\draw [bend left=90, looseness=0.75] (261.center) to (262.center);
\draw [bend right=90, looseness=0.75] (261.center) to (262.center);
\draw [bend left=90, looseness=0.75] (268.center) to (269.center);
\draw [bend right=90, looseness=0.75] (268.center) to (269.center);
\draw [bend right=90, looseness=0.75] (271.center) to (272.center);
\draw [bend left=90, looseness=0.75] (271.center) to (272.center);
\draw [bend right=90, looseness=0.75] (273.center) to (274.center);
\draw [bend left=90, looseness=0.75] (273.center) to (274.center);
\draw [bend right=90, looseness=0.75] (280.center) to (281.center);
\draw [bend left=90, looseness=0.75] (280.center) to (281.center);
\draw (267) to (270);
\draw (279) to (282);
\draw (263) to (275);
\draw [bend left=90, looseness=0.75] (291.center) to (292.center);
\draw [bend right=90, looseness=0.75] (291.center) to (292.center);
\draw [bend left=90, looseness=0.75] (293.center) to (294.center);
\draw [bend right=90, looseness=0.75] (293.center) to (294.center);
\draw [bend left=90, looseness=0.75] (300.center) to (301.center);
\draw [bend right=90, looseness=0.75] (300.center) to (301.center);
\draw [bend right=90, looseness=0.75] (303.center) to (304.center);
\draw [bend left=90, looseness=0.75] (303.center) to (304.center);
\draw [bend right=90, looseness=0.75] (305.center) to (306.center);
\draw [bend left=90, looseness=0.75] (305.center) to (306.center);
\draw [bend right=90, looseness=0.75] (312.center) to (313.center);
\draw [bend left=90, looseness=0.75] (312.center) to (313.center);
\draw (299) to (302);
\draw (311) to (314);
\draw (295) to (307);
\draw [bend left=90, looseness=0.75] (323.center) to (324.center);
\draw [bend right=90, looseness=0.75] (323.center) to (324.center);
\draw [bend left=90, looseness=0.75] (325.center) to (326.center);
\draw [bend right=90, looseness=0.75] (325.center) to (326.center);
\draw [bend left=90, looseness=0.75] (332.center) to (333.center);
\draw [bend right=90, looseness=0.75] (332.center) to (333.center);
\draw [bend right=90, looseness=0.75] (335.center) to (336.center);
\draw [bend left=90, looseness=0.75] (335.center) to (336.center);
\draw [bend right=90, looseness=0.75] (337.center) to (338.center);
\draw [bend left=90, looseness=0.75] (337.center) to (338.center);
\draw [bend right=90, looseness=0.75] (344.center) to (345.center);
\draw [bend left=90, looseness=0.75] (344.center) to (345.center);
\draw (331) to (334);
\draw (343) to (346);
\draw (327) to (339);
\draw [bend left=90, looseness=0.75] (355.center) to (356.center);
\draw [bend right=90, looseness=0.75] (355.center) to (356.center);
\draw [bend left=90, looseness=0.75] (357.center) to (358.center);
\draw [bend right=90, looseness=0.75] (357.center) to (358.center);
\draw [bend left=90, looseness=0.75] (364.center) to (365.center);
\draw [bend right=90, looseness=0.75] (364.center) to (365.center);
\draw [bend right=90, looseness=0.75] (367.center) to (368.center);
\draw [bend left=90, looseness=0.75] (367.center) to (368.center);
\draw [bend right=90, looseness=0.75] (369.center) to (370.center);
\draw [bend left=90, looseness=0.75] (369.center) to (370.center);
\draw [bend right=90, looseness=0.75] (376.center) to (377.center);
\draw [bend left=90, looseness=0.75] (376.center) to (377.center);
\draw (363) to (366);
\draw (375) to (378);
\draw (359) to (371);
\draw (104) to (116);
\draw (118) to (105);
\draw (117) to (106);
\draw (148) to (150);
\draw (149) to (137);
\draw (138) to (136);
\draw (180) to (182);
\draw (181) to (170);
\draw (169) to (168);
\draw (212) to (213);
\draw (202) to (200);
\draw (308) to (297);
\draw (298) to (310);
\draw (296) to (309);
\draw (340) to (330);
\draw (329) to (341);
\draw (342) to (328);
\draw (372) to (362);
\draw (361) to (374);
\draw (373) to (360);
\draw (214) to (201);
\draw (434.center) to (435.center);
\draw (435.center) to (436.center);
\draw (436.center) to (387.center);
\draw (439.center) to (438.center);
\draw (385.center) to (440.center);
\draw (441.center) to (386.center);
\draw [bend left=90, looseness=0.75] (444.center) to (445.center);
\draw [bend right=90, looseness=0.75] (444.center) to (445.center);
\draw [bend left=90, looseness=0.75] (446.center) to (447.center);
\draw [bend right=90, looseness=0.75] (446.center) to (447.center);
\draw [bend left=90, looseness=0.75] (453.center) to (454.center);
\draw [bend right=90, looseness=0.75] (453.center) to (454.center);
\draw [bend right=90, looseness=0.75] (456.center) to (457.center);
\draw [bend left=90, looseness=0.75] (456.center) to (457.center);
\draw [bend right=90, looseness=0.75] (458.center) to (459.center);
\draw [bend left=90, looseness=0.75] (458.center) to (459.center);
\draw [bend right=90, looseness=0.75] (465.center) to (466.center);
\draw [bend left=90, looseness=0.75] (465.center) to (466.center);
\draw (452) to (455);
\draw (464) to (467);
\draw (448) to (460);
\draw (461) to (449);
\draw (451) to (463);
\draw (462) to (450);
\draw (266) to (278);
\draw (264) to (265);
\draw (276) to (277);
\draw (246) to (232);
\draw (244) to (233);
\draw (234) to (245);
\draw (437.center) to (399.center);
\draw (434.center) to (387.center);
\end{pgfonlayer}
\end{tikzpicture}%
\caption{Example of an upper cluster ${C^+=C^+(G,ab,xy)}$ containing ten configuration-graphs. 
In this case~${N_G(A\cup B\cup X\cup Y)=\{v\}}$, 
and three edges are fixed: $ab, xy$ and the only edge not incident to~${A,B,X,Y}$. 
If we start with only these three edges, one can construct any configuration-graph in the cluster~$C^+$ by 
(i)~matching~$v$ to an unmatched point in~${A, B, X, Y}$, and then 
(ii)~matching the remaining four unmatched points in~${A, B, X, Y}$. 
One can easily check that there are five ways to do step~(i), 
and in each case there are exactly two ways to complete step~(ii), 
leading to all ten depicted configuration-graphs.
\label{clusterexample}}%
\end{figure}
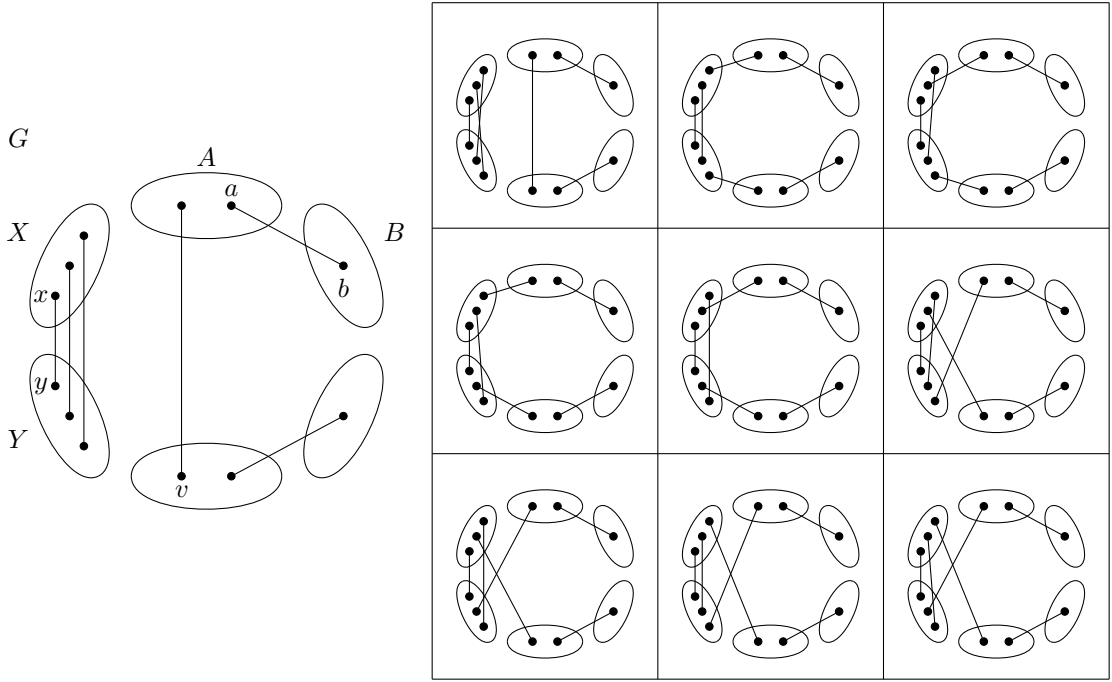
For a configuration-graph~$G\in  G_{ab,xy}$, we denote by $C^+(G,ab,xy)$ the upper cluster containing $G$ that corresponds to an equivalence class in $G_{ab,xy}$. Similarly for a configuration-graph~$G\in  G_{ax,by}$, we denote by $C^-(G,ax,by)$ the lower cluster containing $G$ that corresponds to an equivalence class in $G_{ax,by}$. Thus, if~$H$~is another configuration-graph in  $C^+(G,ab,xy)$ then  $C^+(G,ab,xy)= C^+(H,ab,xy)$.

We can think of an upper cluster as follows.  Start with any configuration-graph~$G$ with edges $ab, xy$. Remove all edges of $G$ with at least one endpoint in~${A,B,X,Y}$, except for~$ab, xy$.  Then replace them with any other edges, each having at least one endpoint in~${A,B,X,Y}$,  subject to the final degree-sequence being~$\bdn$.  So if a vertex $Z$ (distinct from $A,B,X,Y$) had~$\ell$ neighboring points in~${A,B,X,Y}$, then~$Z$ will still have~$\ell$ neighboring points in those vertices.  The set of configuration-graphs that can be obtained is the upper cluster; cf.~\refF{clusterexample}. We can think of lower clusters in the same manner, replacing~$ab, xy$ with~$ax, by$.
\begin{definition}[Switching-partners] \label{oclusters} 
Given $G^+\in G_{ab,xy}$, let $G^-\in G_{ax,by}$ be the graph obtained by replacing the edges $ab, xy$ in $G^+$ with $ax, by$. 
There is a natural bijection between configuration-graphs in $C^+=C^+(G^+,ab,xy)$ and $C^-=C^-(G^-,ax,by)$. 
Namely, for every configuration-graph in $C^+$, replace the edges $ab, xy$ with $ax, by$. 
The resulting set of configuration-graphs is the lower cluster~$C^-$.  
We call this pair of clusters~$C^+, C^-$ {\em switching-partners}.
\end{definition}
We remark that comparing the values of $Z$ averaged over the configuration-graphs of two switching-partners (see~\refL{switch} below) is the more complicated form of switching that we alluded to in~\refS{sec:clustermotivation}. 
The crux is that in this average, the vertices in ${A, B, X, Y}$ each have the same potential neighbors 
outside of~$\{A, B, X, Y\}$, 
which eliminates the problematic `neighborhood effect' described in \refS{sec:clustermotivation}. 

\subsection{Key lemmas: switching and counting results for clusters}\label{sec:switchingcounting}
In this subsection we state our main switching results for clusters, along with relevant counting results.  

We start with the following switching-type result between upper and lower clusters, whose proof we defer to~\refS{sec:goodswitch}. 
Recall that~$Z(C)=\sum_{G\in C} Z(G) = \sum_{G \in C}\sum_{\sigma \in \Pi_G}Z(\sigma)$ 
is proportional to the probability that the 
relaxed $\bdn$-process produces a graph from the cluster~$C$, 
see~\eqref{def:ZG}--\eqref{def:ZS}. 
So \refL{switch} intuitively states that, up to some local averaging\footnote{Since~$C^+$ and~$C^-$ have the same cardinality, \refL{switch} shows that $\sum_{G\in C^+} Z(G)/|C^+|\ge \sum_{G\in C^-} Z(G)/|C^-|$, which in concrete words means that~$Z(G)$ averaged over~$G \in C^{+}$ is at least at large as~$Z(G)$ averaged over~$G \in C^-$.}, 
the relaxed $\bdn$-process prefers the edges~$ab, xy$ over the edges~$ax, by$, 
i.e., prefers configuration-graphs with more small edges 
(which is consistent with~\refR{rem:lowertail} and the heuristics from~\refS{sec:mainresult}). 
\begin{lemma}[Switching of clusters]\label{switch}
For all switching-partners~$C^+,C^-$, 
we have~${Z(C^+)\ge Z(C^-)}$. 
\end{lemma}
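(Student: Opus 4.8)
The plan is to exploit the natural bijection between the switching-partners $C^+$ and $C^-$ (Definition~\ref{oclusters}) together with the edge-sequence expansion $Z(C) = \sum_{G\in C}\sum_{\sigma\in\Pi_G} Z(\sigma)$, reducing the inequality $Z(C^+)\ge Z(C^-)$ to a collection of local inequalities between edge-sequences. First I would set up notation: fix a configuration-graph $G^+\in C^+$ with its image $G^-\in C^-$ obtained by replacing $ab,xy$ with $ax,by$. Given an edge-sequence $\sigma\in\Pi_{G^+}$ in which $ab$ appears before $xy$, I form the four edge-sequences $\sigma_{ab,xy},\sigma_{xy,ab}\in\Pi_{G^+}$ (the latter swapping the time-slots of $ab$ and $xy$) and $\sigma_{ax,by},\sigma_{by,ax}\in\Pi_{G^-}$ (replacing the two edges in one or the other order), exactly as in the $k=1$ outline around~\eqref{keywish}. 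The key point is that these four sequences agree on \emph{all} edges other than the two switched ones, and in particular on all edges not incident to $A,B,X,Y$ — which is why the cluster equivalence relation was defined to fix precisely that edge set. Summing over $\sigma$ (with $ab$ before $xy$), over all $G^+\in C^+$, I get $2 Z(C^+)$ on the left and $2 Z(C^-)$ on the right, so it suffices to prove the pointwise bound
\[
Z(\sigma_{ab,xy}) + Z(\sigma_{xy,ab}) \;\ge\; Z(\sigma_{ax,by}) + Z(\sigma_{by,ax}),
\]
\emph{after} summing the left side over the appropriate matching of edge-sequences of graphs in $C^+$ against edge-sequences of graphs in $C^-$.

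The heart of the argument is estimating $Z(\sigma) = \prod_{i=0}^{m-1} \tfrac{2}{\Gamma_i(\sigma)(\Gamma_i(\sigma)-1)}$ for each of the four sequences. Here I would compare the trajectories step by step: the four sequences induce the \emph{same} sequence of edge-additions except in the two slots where the switched edges sit, so $\Gamma_i(\sigma)$ can differ between them only at steps strictly between those two slots (the switched edges touch $A,B,X,Y$ in slightly different combinations, so a vertex among $A,B,X,Y$ may become saturated at a different time). Writing out the ratio $\bigl(Z(\sigma_{ab,xy})+Z(\sigma_{xy,ab})\bigr)\big/\bigl(Z(\sigma_{ax,by})+Z(\sigma_{by,ax})\bigr)$, the common factors cancel and one is left with a ratio of products over the `middle' block of steps, governed entirely by which of $A,B,X,Y$ is saturated when. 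Because $\deg(A),\deg(B)\le k < \deg(X),\deg(Y)$, the edge $ab$ pairs two low-degree vertices so they saturate `together', producing the amplification heuristic from~\refS{sec:clustermotivation}; the clean algebraic fact to establish (an AM–GM / convexity-type inequality on the relevant product of $\tfrac{1}{\Gamma(\Gamma-1)}$ factors) is what makes the pointwise sum-inequality hold.

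The main obstacle — and the reason clusters are needed rather than a graph-by-graph argument — is the `neighborhood effect': for a single $G^+$ the pointwise inequality can fail because the neighbors of $X$ outside $\{A,B,X,Y\}$ might have much smaller degrees than the neighbors of $B$, skewing the middle-block products. The plan to defeat this is to \emph{not} match $\sigma$ with edge-sequences of the \emph{same} image graph, but to average over the cluster: for each configuration-graph $H^+\in C^+$ and each choice of which neighbor-points of $\{A,B,X,Y\}$ attach to which of $A,B,X,Y$, the bijection of Definition~\ref{oclusters} and a re-indexing of the neighbor-assignments lets me pair the family $\{\sigma_{ax,by}\}$ ranging over \emph{all} of $C^-$ against the family $\{\sigma_{ab,xy},\sigma_{xy,ab}\}$ ranging over all of $C^+$, in such a way that $X$ and $B$ see the \emph{same multiset} of potential neighbor-degrees. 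After this averaging the asymmetry in neighbor degrees disappears, only the genuine `$ab$ pairs two small vertices' advantage survives, and the middle-block inequality becomes provable. So the skeleton is: (1)~expand via edge-sequences; (2)~reduce to a matched sum of four-term inequalities using the cluster bijection; (3)~cancel common trajectory factors and reduce to a middle-block product inequality; (4)~use the cluster-averaging to symmetrize the neighbor degrees of $X$ and $B$; (5)~finish with an elementary convexity/AM–GM inequality on the $\tfrac{1}{\Gamma(\Gamma-1)}$ factors. I expect step~(4) — making the averaging bookkeeping precise so that the neighbor-degree multisets really do match — to be the delicate part, with step~(5) being a self-contained inequality lemma.
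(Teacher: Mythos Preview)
Your steps (1)--(3) and (5) match the paper's approach closely: expand $Z(C^\pm)$ over edge-sequences, form counterparts $\sigma,\overline{\sigma},\sigma',\overline{\sigma'}$ (your $\sigma_{ab,xy},\sigma_{xy,ab},\sigma_{ax,by},\sigma_{by,ax}$), cancel the common factors outside the `middle block', and prove the four-term inequality by elementary case analysis. The paper does exactly this as \refL{completeswitch}\ref{completeswitch:1}, and the final inequality is indeed an AM--GM-type estimate on products of $\tfrac{1}{\Gamma(\Gamma-1)}$ factors.

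The gap is in step~(4), which you correctly flag as the delicate part. The four-term inequality $Z(\sigma)+Z(\overline{\sigma})\ge Z(\sigma')+Z(\overline{\sigma'})$ does \emph{not} hold for all counterparts; it requires the side condition $t_a\le t_y$ and $t_b\le t_x$, where $t_v$ is the last step at which a point of $V\setminus\{v\}$ is used. When, say, $t_x<t_b$ (so $X$'s other points are exhausted before $B$'s), the inequality can fail, and no amount of ``averaging over neighbor-degree assignments'' in the sense you describe will rescue it --- the problematic asymmetry is not in \emph{which} external points are neighbors of $A\cup B\cup X\cup Y$ (that is fixed across the cluster) but in \emph{which of} $A,B,X,Y$ each such neighbor is attached to, and more specifically in the relative \emph{order} in which the $B$-edges and $X$-edges appear in $\sigma$. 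The paper's fix is structurally different from your proposal: for each $\sigma_1$ with $t_x<t_b$ it constructs a \emph{twin} $\sigma_2$ by swapping certain $B$-points with $X$-points (via an explicit injection on $\{B,X\}$-sequences, \refO{oswap}) so that $Z(\sigma_1)=Z(\sigma_2)$ exactly and $\sigma_1,\sigma_2$ lie on opposite sides of $\cP^+,\cP^-$; similarly for $t_y<t_a$. This is an injection, not an averaging, and it consumes precisely the bad edge-sequences, leaving behind only those satisfying $t_a\le t_y$, $t_b\le t_x$, to which the four-term inequality then applies pointwise. So the architecture is: split $\cP^+\cup\cP^-$ into twins (exact equality) and non-twins (four-term inequality holds), rather than prove a summed inequality uniformly. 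Without identifying the $t$-parameters and this twin construction, your step~(4) does not go through.
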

%
%
We would like to strengthen the conclusion of \refL{switch} to ${Z(C^+)\ge (1+\epsilon)Z(C^-)}$ for some $\epsilon>0$, 
but for technical reasons we only prove it for clusters with special properties, 
defined below as \emph{good clusters} (which suffices, since a significant proportion of clusters turns out to be good). 
The main difficulty in proving it for all clusters is that when the vertex~$A$ or~$B$ become saturated during the final few steps of the relaxed $\bdn$-process, 
the number of unsaturated vertices~$\G_i$ is so small that a small (additive) change to it could have a significant effect on~$Z(\s)$;  
cf.~equation~\eqref{def:Zsigma} and Observation~\ref{obs:unsaturated}. 
The definition of good clusters will intuitively ensure that such `late saturation effects' have limited impact on our arguments. 
Recall that~$G_{ab,xy}$ is the set of configuration-graphs with degree sequence~$\bdn$ that contain the edges~$ab, xy$, see~\refS{sec:clusterdefinition}. 
\begin{definition}[Good edge-sequences and clusters]\label{def:good}
An edge-sequence $\s$ of $G\in G_{ab,xy}$ is called \emph{good with respect to $a,b,x,y$} if the following two conditions hold:%
\vspace{-0.25em}\begin{enumerate}
\itemindent 0.125em \itemsep 0.125em \parskip 0.125em
    \item [(i)] no points of~${X, Y}$ are adjacent to any points of ${A, B}$ in $G$, and
    \item[(ii)] the vertices~$A$ and~$B$ are both saturated in the first $(1-\zeta)m$ steps, 
where 
\begin{equation}\label{def:zeta}
\zeta 
 := \frac{\xi^2}{16\Delta^3} .
\end{equation}
\end{enumerate}
An upper cluster $C^+=C^+(G,ab,xy)$ is called \textit{good} if at least a sixteenth of its total $Z$-value comes from good edge-sequences $\s$, i.e., if
\begin{equation}\label{goodclusterdef}
\sum_{G\in C^+}
\sum_{\substack{\sigma\in \Pi_G:\\ (\sigma,ab,xy)\in \bG}} Z(\sigma) \; \ge \; \frac{1}{16}Z(C^+),
\end{equation}
where
\begin{equation}\label{def:bG}
\bG:=\bigcpar{(\sigma, ab, xy) \: : \: \text{$\sigma$ is good with respect to $a,b,x,y$}}.
\end{equation}%
\end{definition}
The next lemma, whose proof we defer to \refS{sec:goodswitch}, 
shows that for good clusters we can indeed strengthen the switching-type estimate of \refL{switch} 
by an extra factor of~$1+\epsilon$.  
\begin{lemma}[Switching of good clusters]\label{goodswitch}
There are~$\epsilon,n_0>0$ 
such that the following holds for all~${n \ge n_0}$: 
if~${C^+,C^-}$ are switching-partners and ${C^+=C^+(G,ab,xy)}$ is good, then we have
\[Z(C^+)\ge (1+\epsilon) Z(C^-).\] 
\end{lemma}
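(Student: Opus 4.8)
The plan is to prove \refL{goodswitch} by refining the argument behind \refL{switch}, upgrading the pointwise inequality $Z(C^+)\ge Z(C^-)$ into a multiplicative gap on the portion of the $Z$-mass coming from good edge-sequences. Concretely, the proof of \refL{switch} (in \refS{sec:goodswitch}) will produce, for each edge-sequence $\sigma$ of a configuration-graph in $C^+$, a matching with two edge-sequences of configuration-graphs in $C^-$ (the analogues of $\axby,\byax$ from~\eqref{keywish}), together with an inequality of the shape $Z(\abxy)+Z(\xyab)\ge Z(\axby)+Z(\byax)$ obtained by comparing the products in~\eqref{def:Zsigma} step by step. The key observation driving \refL{goodswitch} is that this comparison is \emph{strict with a uniform multiplicative slack} whenever $\sigma$ is good: the term $\tfrac{2}{\Gamma_i(\Gamma_i-1)}$ contributed at the step where the edge $ab$ appears (saturating both $A$ and $B$, or at least pushing both toward saturation) is strictly larger in the $G^+$-sequence than the corresponding product of the two factors in the $G^-$-sequence, because in $G^-$ the vertices $A,B$ are joined to the higher-degree vertices $X,Y$ and so saturate later, i.e.\ at a stage where $\Gamma_i$ is larger. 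Quantifying this: good condition (ii) guarantees $A,B$ saturate within the first $(1-\zeta)m$ steps, so at the relevant step $\Gamma_i\le \Gamma_{(1-\zeta)m}$, which is still $\Theta(n)$ (by the completion/concentration estimates, or directly since few vertices are left only in the last $\zeta$-fraction of steps); hence the ratio of the two sides differs from $1$ by at least $c/\Gamma_i = \Omega(1/n)$ per good sequence. That alone is too weak, so the real point is that good condition (i) — no points of $X,Y$ adjacent to $A,B$ — lets us relocate the comparison to a \emph{block} of consecutive steps (the steps completing $A$ and $B$) and accumulate $\Omega(1)$ total slack there, independent of $n$, by exploiting the amplification phenomenon described in~\refS{sec:clustermotivation}.

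First I would fix notation exactly matching the proof of \refL{switch}: for switching-partners $C^+,C^-$ with $C^+=C^+(G,ab,xy)$, recall the cluster bijection from \refD{oclusters} and the edge-sequence matching it induces. I would write $Z(C^+)=\sum_{G\in C^+}\sum_{\sigma\in\Pi_G}Z(\sigma)$, split the inner sum into good sequences $(\sigma,ab,xy)\in\bG$ and the rest, and apply the \emph{pointwise} bound from \refL{switch}'s proof to the non-good part (losing nothing) and the \emph{strengthened} bound to the good part. By~\eqref{goodclusterdef}, the good part carries at least $\tfrac14 Z(C^+)$, so if the strengthened bound reads $Z_{\text{good}}^{+}\ge (1+\delta)Z_{\text{good}}^{-}$ for a constant $\delta=\delta(\xi,\Delta)>0$, then combining with $Z^{+}_{\text{rest}}\ge Z^{-}_{\text{rest}}$ and the fact that the matched $C^-$-sequences partition (up to controlled multiplicity) the sequences contributing to $Z(C^-)$ yields $Z(C^+)\ge Z(C^-)+\delta Z_{\text{good}}^{-}\ge (1+\epsilon)Z(C^-)$ with $\epsilon=\Theta(\delta)$, after checking that $Z^{-}_{\text{good}}$ is a constant fraction of $Z(C^-)$ as well (which follows because the matching is, up to bounded-to-bounded multiplicity, measure-comparable on both sides). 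The bookkeeping of multiplicities in the edge-sequence matching — several $\sigma^+$ can map to sequences of the same $G^-$, and the roles of $\axby$ versus $\byax$ must be tracked — is routine but needs care, and is exactly the place where one wants the relaxed (multigraph) setting so that switched edges never collide with existing ones.

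The main obstacle, and the technical heart, is establishing the per-good-sequence constant multiplicative slack $Z(\abxy)+Z(\xyab)\ge(1+\delta)\bigl(Z(\axby)+Z(\byax)\bigr)$ uniformly over good $\sigma$. Here is how I would attack it. Expand each $Z(\cdot)$ as a product over the $m$ steps; the four sequences agree on all steps except those involving the points of $A,B,X,Y$, and by good condition (i) the neighbours of $A,B$ are disjoint from $X,Y$, so the "interaction window" splits into the steps completing $A\cup B$ and, separately, the steps completing $X\cup Y$. On the $X\cup Y$ side the two configurations are isomorphic as far as $\Gamma_i$ dynamics go (same degree multiset of involved vertices, same external neighbours up to the cluster equivalence), so those factors cancel or contribute $\ge 1$ trivially. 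The gain must therefore come from the $A\cup B$ side: in $G^+$ the point $b$ is matched to $a$, so when $A$ saturates, $B$ saturates within its remaining $\deg(B)-1\le k-1$ steps, whereas in $G^-$ the point $b$ is matched into $Y$ (degree $>k$) and $a$ into $X$, so the "twin" of $A$ is the larger vertex $X$ and saturation of $B$ is decoupled and delayed. Using condition (ii) to ensure all of this happens while $\Gamma_i$ ranges over an interval $[\delta_0 n, (1-\zeta)m$-worth of vertices$]$ bounded away from $0$, I would lower-bound the ratio of the relevant sub-products by a quantity depending only on $\xi,\Delta$ via the elementary inequality that, for $\Gamma$ decreasing by $1$ each step and staying $\ge c n$, reordering which pair of vertices saturates together changes $\prod \tfrac{2}{\Gamma(\Gamma-1)}$ multiplicatively by a factor bounded away from $1$ — precisely the "amplification" computation. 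A clean way to package this is to average over the position within $\sigma$ at which $A$ becomes saturated (the sum $Z(\abxy)+Z(\xyab)$ is designed to symmetrize this) and show term-by-term domination with a fixed ratio; I would isolate the worst case $\deg(A)=\deg(B)=k$, $\deg(X)=\deg(Y)=k+1$ and verify the constant there, then note monotonicity in the degrees handles the rest. I expect this constant-slack estimate to be the only genuinely delicate calculation; everything else is assembly. Finally, the $n\ge n_0$ hypothesis absorbs lower-order correction terms (e.g.\ from the last $O(1)$ steps where $\Gamma_i$ is small but which by condition (ii) do not involve $A,B$), and $\epsilon$ is then taken to be, say, $\delta/8$.
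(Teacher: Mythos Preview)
Your central claim --- that for \emph{every} good edge-sequence $\sigma$ one has $Z(\abxy)+Z(\xyab)\ge(1+\delta)\bigl(Z(\axby)+Z(\byax)\bigr)$ --- is not justified, and in fact the paper's machinery shows why it should fail. The per-sequence comparison (\refL{completeswitch}) yields the extra $(1+\epsilon')$ factor only under the gap condition $\min\{\sigma(xy),t_x,t_y\}-\max\{\sigma(ab),t_a,t_b\}\ge\zeta m/3$, and goodness does \emph{not} imply this: condition~(ii) forces $A,B$ to saturate early, but says nothing about where the edges incident to $X,Y$ (other than $xy$) lie in $\sigma$. If, say, $t_x$ and $t_y$ are both small, the gap is negative even though $\sigma$ is good, and then the calculation in the proof of \refL{completeswitch} gives only the bare inequality $\ge1$ (look at the $C_3/C_2$ ratio in~\eqref{eq:case1:bound}--\eqref{eq:case1:bound:better}: the gain comes from the length of the interval $(i_b,i_x]$, which can be empty). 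Your heuristic that the ``$X\cup Y$ side'' cancels is also incorrect: in the counterpart $\sigma'$ the point $x$ is matched at step $\sigma(ab)$ rather than $\sigma(xy)$, so $X$ saturates at a different step and the factors do not simply drop out. Finally, you overlook the twin structure (\refL{twins}): good sequences can still have $t_x<t_b$ or $t_y<t_a$, in which case they are handled by the twin pairing with \emph{equality}, not by \refL{completeswitch} at all, so even if your per-sequence slack held for non-twin good sequences you would need to argue that twins do not swallow the good mass.

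The paper's proof avoids all of this by not attempting slack on every good sequence. Instead it constructs a smaller set $\cS\subseteq\cP^+$ on which the gap condition of \refL{completeswitch}\ref{completeswitch:2} is \emph{forced}: every edge incident to $X,Y$ appears after every edge incident to $A,B$, with a $\zeta m/3$ buffer. This guarantees both the gap and (via \refL{twins}\ref{lem:twins:techn2}) that $\cS$ consists of non-twins. The work then shifts to showing $\sum_{\sigma\in\cS}Z(\sigma)\ge\varrho\,Z(C^+)$ for some constant $\varrho>0$, which is done by a shifting map $f:\cP^+\cap\bG_{ab,xy}\to\cS$ that pushes the $X,Y$-edges backward in blocks of length $\lfloor\zeta m/3\rfloor$; one bounds the preimage size and the $Z$-ratio $Z(f(\sigma))/Z(\sigma)$ by constants depending only on $\zeta,\Delta$, and then uses the good-cluster hypothesis~\eqref{goodclusterdef} to get the $1/4$ factor. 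That is the missing idea in your proposal.
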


Recall that $\Xs(G)$~is the number of small edges in a configuration-graph~$G$, 
and that $\mu$~is approximately the expected number of small edges in a uniform configuration-graph with degree sequence~$\bdn$. 
Our main technical result \refT{main} bounds the probability that, in the graph chosen by the relaxed $\bdn$-process, 
the number of small edges has a small (but significant) distance from $\mu$.
We thus define 
\begin{equation}\label{def:Nz}
\cN_z :=\bigcpar{G \: : \: \text{configuration-graph with degree sequence }  \bdn \text{ and } |\Xs(G) -\mu| \le z } .
\end{equation}
and henceforth focus on the configuration-graphs $G\in \cN_{\gamma\mu}$ where $\gamma>0$ is a constant.
For such~$G$, different choices for $a,b,x,y$ can yield different upper clusters $C^+(G,ab,xy)$, and similarly for lower clusters. 
Note that while switching-partners form a bijection upon specifying $a,b,x,y$, each configuration-graph can have  different switching-partners for different choices of those four vertices.  One particularly noteworthy example arises when we exchange $x,y$. 
Note that $C^+(G,ab,xy) = C^+(G,ab,yx)$, but $C^-(G,ax, by)$ and $C^-(G,ay,bx)$ are disjoint. 
 Similarly when we exchange $a,b$. 
This could potentially create a problem with our definition of switching-partners, since the switching-partner of $C^+(G^+,ab,xy)$ is defined to be $C^-(G^-,ax,by)$, while the switching-partner of $C^+(G^+,ab,yx)$ is defined to be $C^-(G^-,ay,bx)$.  
To account for this problem we specify the following convention, 
which will be key for the counting result Lemma~\ref{coefficients} below (as well as Observation~\ref{obs:convention} in Section~\ref{sec:counting}).
\begin{convention}\label{notationalconvention}
$C^+(G,ab,xy), C^+(G,ab,yx), C^+(G,ba,xy), C^+(G,ba,yx)$ are considered to be four different clusters, despite the fact that they are identical. Similarly $C^-(G,ax,by)$, $C^-(G,by, ax)$ are considered to be two different clusters and so are $C^-(G,ay,bx)$, $C^-(G,bx, ay)$. On the other hand, $C^-(G,ax,by)$ and $C^-(G,xa,by)$ are considered the same; 
note that this does not lead to any problems in our definition of switching-partners, since we  list the edge as ``$ax$'' rather than ``$xa$'' in our definitions of lower clusters and switching-partners.
\end{convention}

When we discuss the number of clusters containing $G$, we mean the number of choices of $a,b,x,y$ such that $G$ lies in a cluster $C^+(G,ab,xy)$ or $C^-(G,ax,by)$; i.e., the number of choices of $a,b,x,y$ such that~$E(G)$ contains $ab, xy$ or $ax, by$. From the notational convention above, we see that clusters obtained by permuting the order of $a,b$ or $x,y$ are considered to be distinct, even if they contain the same set of configuration-graphs (e.g., $C^+(G,ab,xy)$ and $C^+(G,ab,yx)$ are counted as two different clusters).
With this convention in mind, for any configuration-graph~$G$, we define~$U_G$ to be the number of upper clusters that contain~$G$. 
We similarly define~$L_G$ to be the number of lower clusters that contain~$G$.

The next lemma, whose proof we defer to \refS{sec:coefficients}, shows that for any configuration-graph with approximately $\mu$ small edges, 
the number of upper and lower clusters containing it are roughly~equal 
for which it is intuitively important that switching-partners form a bijection, as discussed above). 
%
\begin{lemma}[Counting clusters]\label{coefficients}
There are $\gamma_0,n_0,D>0$ such that the following holds for all~${0<\gamma<\gamma_0}$ and~${n\ge n_0}$: 
we have $|{L_G}/{U_H}-1| \le D\gamma$ 
and~$L_G,U_H \ge 1$ 
for all configuration-graphs~$G,H\in \cN_{\gamma\mu}$.
\end{lemma}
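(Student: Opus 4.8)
The plan is to compute $U_G$ and $L_G$ almost exactly in terms of $X_k(G)$ and a few degree-sums, note that these quantities are pinned down to within a $(1+O(\gamma))$-factor on $\cN_{\gamma\mu}$, and then verify an algebraic identity which forces the two counts to coincide at the target value $X_k=\mu$. Write $P:=\sum_{1\le j\le k}jn_j$ for the total degree of the \emph{small} vertices (those of degree $\le k$) and $Q:=2m-P=\sum_{k<j\le\Delta}jn_j$ for the total degree of the \emph{large} vertices, so $\mu=P^2/(4m)$ by~\eqref{def:mu}; from~\eqref{keyassumption} and $\Delta=O(1)$ one gets $P\ge\xi n$, $Q\ge 2\xi n$, hence $P,Q,m,\mu$ are all of order $\Theta(n)$. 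Each edge of a configuration-graph $G$ with degree sequence $\bdn$ is \emph{small} (both endpoints of degree $\le k$; there are $X_k(G)$ of these), \emph{mixed} (one endpoint of each type), or \emph{large} (both endpoints of degree $>k$). Counting edge-endpoints at small vertices gives $2X_k(G)+\#\{\text{mixed}\}=P$, so $\#\{\text{mixed}\}=P-2X_k(G)$, and since the three types partition all $m$ edges, $\#\{\text{large}\}=m-P+X_k(G)$. For $G\in\cN_{\gamma\mu}$ we have $|X_k(G)-\mu|\le\gamma\mu$, so all three counts are $\Theta(n)$ and equal their values at $X_k=\mu$ up to a factor $1+O(\gamma)$.

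Next I would count the clusters through a fixed configuration-graph $G$. An upper cluster is determined by a tuple $(a,b,x,y)$ with $ab,xy\in E(G)$, $\deg(a),\deg(b)\le k<\deg(x),\deg(y)$, where by the notational convention reorderings of $a,b$ or of $x,y$ are counted separately. Since a small edge and a large edge can never share a vertex (their endpoints have different degrees) and a configuration-graph has no loops, the distinctness of $A,B,X,Y$ is automatic, so the oriented small edge $ab$ and the oriented large edge $xy$ are chosen independently:
\begin{equation*}
U_G=4\,X_k(G)\bigpar{m-P+X_k(G)}.
\end{equation*}
A lower cluster is determined by $(a,x,b,y)$ with $ax,by\in E(G)$, $\deg(a),\deg(b)\le k<\deg(x),\deg(y)$, $A\ne B$ and $X\ne Y$; here $ax,by$ are \emph{mixed} edges whose small and large endpoints are forced by the degree constraints, so these tuples are precisely the ordered pairs of mixed edges of $G$ with distinct small endpoints and distinct large endpoints. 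Starting from all $\bigpar{P-2X_k(G)}^2$ ordered pairs of mixed edges and removing, by inclusion--exclusion, those sharing a small or a large vertex gives $L_G=\bigpar{P-2X_k(G)}^2-R_G$, with $0\le R_G=O(n)$ since each coincidence sum is at most $O(\Delta)$ times the number $P-2X_k(G)=O(n)$ of mixed edges.

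Finally I would combine the two formulas. As functions of the single variable $t:=X_k$,
\begin{equation*}
\bigpar{P-2t}^2-4t\bigpar{m-P+t}=P^2-4mt=4m(\mu-t),
\end{equation*}
which vanishes \emph{exactly} at $t=\mu=P^2/(4m)$; equivalently, substituting $t=\mu$ yields $P-2\mu=PQ/(2m)$ and $m-P+\mu=Q^2/(4m)$, so $(P-2\mu)^2=4\mu\cdot(m-P+\mu)=\bigpar{PQ/(2m)}^2$. For $G,H\in\cN_{\gamma\mu}$ we have $4m|\mu-X_k(H)|\le4m\gamma\mu=O(\gamma n^2)$, and combining this with the $\Theta(n)$-scale perturbation bounds from the first paragraph and $\bigpar{PQ/(2m)}^2=\Theta(n^2)$ gives $L_G=\bigpar{PQ/(2m)}^2(1+O(\gamma))-O(n)$ and $U_H=\bigpar{PQ/(2m)}^2(1+O(\gamma))$. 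Since the additive $O(n)$ is an $O(1/n)$ relative error, $L_G/U_H=1+O(\gamma)+O(1/n)$, which is $\le D\gamma$ for a suitable $D=D(\xi,\Delta)$ once $\gamma<\gamma_0(\xi,\Delta)$ and $n\ge n_0$.

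I expect the main obstacle to be purely the bookkeeping of the cluster counts $U_G$ and $L_G$ under the paper's conventions — which reorderings of $a,b,x,y$ are identified, which vertex-coincidences are forbidden, and the point that the small/large roles of a mixed edge's endpoints are forced — because this is exactly what makes the \emph{same} leading factor appear on both sides (the identity turning $(P-2\mu)^2$ into $4\mu(m-P+\mu)$), so that the ratio tends to $1$. Once the two counts are pinned down correctly, the displayed identity and the first-order perturbation in $\gamma$ are routine.
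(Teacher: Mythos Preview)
Your proposal is correct and follows essentially the same approach as the paper: both compute $U_G=4X_k(G)\bigl(m-P+X_k(G)\bigr)$ exactly and $L_G=(P-2X_k(G))^2+O(n)$, then use the identity $P^2=4m\mu$ (equivalently your $(P-2t)^2-4t(m-P+t)=4m(\mu-t)$) to see that the leading terms coincide at $X_k=\mu$, leaving only an $O(\gamma)+O(1/n)$ relative error. Your explicit algebraic identity is a slightly cleaner way to package what the paper does in its display~\eqref{LUcoefficient} together with Observation~\ref{technical}, but the substance is identical.
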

%

For technical reasons we shall also need the following lemma, whose proof we defer to \refS{sec:goodpermutation}. 
It intuitively says that, for any edge-sequence $\s$ of a configuration-graph~$G$ with approximately~$\mu$ small edges, 
many upper clusters~$C^+=C^+(G,ab,xy)$ containing~$G$ yield a good edge-sequence~$(\sigma,ab,xy) \in \bG$, 
where~$\bG$ is defined as in~\eqref{def:bG}. 
%
\begin{lemma}[Counting upper clusters yielding good edge-sequences]\label{goodpermutation}
There are~$\gamma,n_0>0$ such that the following holds for all~$n \ge n_0$: 
for all configuration-graphs~$G \in \cN_{\gamma\mu}$ and edge-sequences~$\sigma\in \Pi_G$ we have 
\begin{equation}\label{eq:goodpermutation}
\big|\bigcpar{C^+=C^+(G,ab,xy) \: :\: (\sigma, ab, xy)\in \mathbb{G}}\big| \; \ge \; \frac{1}{8} U_G. 
\end{equation}
\end{lemma}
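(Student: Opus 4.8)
The plan is to prove \refL{goodpermutation} by \emph{complementary counting}. Fix a configuration-graph $G\in\cN_{\gamma\mu}$ and an edge-sequence $\sigma\in\Pi_G$; write $S:=\Xs(G)$ for the number of small edges of $G$ and $T$ for its number of large edges (those with both endpoints of degree $>k$). Since a small edge and a large edge are automatically distinct, we may treat endpoints as unordered, so that the count on the right-hand side of~\eqref{eq:goodpermutation} equals $S\cdot T$ (any convention on endpoint-orderings is applied identically to both sides and is immaterial). It then suffices to show that the number of \emph{bad} pairs — those $(ab,xy)$ for which $\sigma$ violates condition~(i) or condition~(ii) of \refD{def:good} — is at most $\tfrac12 ST$; I will bound the pairs violating~(i) and those violating~(ii) separately, by $\tfrac14 ST$ each (lower-order terms being absorbed for $n\ge n_0$).

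First I would dispose of condition~(i). For a fixed small edge $ab$, the neighbour-set $N_G(A\cup B)$ contains at most $\deg(A)+\deg(B)\le 2\Delta$ points, hence lies in at most $2\Delta$ vertices, and each vertex is incident to at most $\Delta$ edges; so at most $2\Delta^{2}$ large edges $xy$ can make a point of $X\cup Y$ adjacent to $A\cup B$. Summing over the $S$ small edges, at most $2\Delta^{2}S$ pairs violate~(i). Next, condition~(ii): since $\sigma$ is fixed and every step of the relaxed $\bdn$-process saturates at most two vertices, at most $2\zeta m+O(1)$ vertices become saturated during the final $\zeta m$ steps; each such vertex lies in at most $\Delta$ edges, so at most $(2\zeta m+O(1))\Delta$ small edges $ab$ have an endpoint saturated after step $(1-\zeta)m$, and since condition~(ii) involves only $a,b$, at most $(2\zeta m+O(1))\Delta\cdot T$ pairs violate~(ii).

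It then remains to check $2\Delta^{2}S\le\tfrac14 ST$ and $(2\zeta m+O(1))\Delta T\le\tfrac14 ST$, i.e., $T\ge 8\Delta^{2}$ and (essentially) $\zeta\le S/(8m\Delta)$. The latter is precisely what the choice $\zeta=\xi^{2}/(16\Delta^{3})$ in~\eqref{def:zeta} is calibrated for: from $S\ge(1-\gamma)\mu\ge\mu/2$, from $\mu\ge(\xi n)^{2}/(4m)$ via~\eqref{def:mu} and~\eqref{keyassumption}, and from $m\le\Delta n/2$, one gets $S/(8m\Delta)\ge\xi^{2}/(16\Delta^{3})=\zeta$ with a factor-$2$ of slack that absorbs the $O(1)$ rounding once $n$ is large. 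The bound $T\ge 8\Delta^{2}$ is the crux, and it is where the hypothesis $G\in\cN_{\gamma\mu}$ enters: writing $D_S:=\sum_{1\le j\le k}jn_j$ for the number of point-endpoints at small vertices, counting edge-endpoints at small vertices gives the exact identity $T=m-D_S+\Xs(G)$; substituting $\Xs(G)\ge(1-\gamma)\mu$ and $\mu=D_S^{2}/(4m)$ and setting $q:=D_S/(2m)$ yields $T\ge m\bigl[(1-q)^{2}-\gamma\bigr]$, and since the total degree of large vertices satisfies $2m-D_S=\sum_{j>k}jn_j\ge 2\xi n$ while $2m\le\Delta n$, we have $1-q\ge 2\xi/\Delta$, hence $T\ge m(4\xi^{2}/\Delta^{2}-\gamma)=\Omega(n)$ once $\gamma$ is small. (This step genuinely requires $G\in\cN_{\gamma\mu}$: without it one can have $T=0$, e.g.\ when every large vertex is adjacent only to small vertices — but such configuration-graphs are far from having $\mu$ small edges, so they are excluded.) Taking $\gamma:=\min\{1/4,\ 2\xi^{2}/\Delta^{2}\}$ and $n_0$ large enough that $\Omega(n)\ge 8\Delta^{2}$, the two estimates add to at most $\tfrac12 ST$, and the lemma follows.

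The main obstacle is the lower bound $T=\Omega(n)$ on the number of large edges: $T$ cannot be bounded from the degree sequence alone, so the $\cN_{\gamma\mu}$ hypothesis must be channelled through the elementary edge-endpoint identity $T=m-D_S+\Xs(G)$; after that everything is routine bookkeeping, the only other point deserving care being the observation that the value of $\zeta$ fixed in~\eqref{def:zeta} is exactly what makes the condition-(ii) estimate close.
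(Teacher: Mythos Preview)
Your proof is correct and takes essentially the same approach as the paper: both bound the number of small edges with a late-saturating endpoint by $O(\zeta m\Delta)$ via the choice~\eqref{def:zeta}, bound for each fixed small edge $ab$ the number of offending large edges $xy$ by $2\Delta^2$, and use the identity $T=m-D_S+\Xs(G)$ (equivalently Observation~\ref{technical}) together with $G\in\cN_{\gamma\mu}$ to show $T=\Omega(n)$. The only difference is cosmetic---the paper organises the two estimates multiplicatively via the factorisation~\eqref{eq:goodpermutation:first} to obtain $\tfrac34\cdot\tfrac23=\tfrac12$, whereas you organise them additively as $\tfrac14+\tfrac14=\tfrac12$.
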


We close this subsection with an observation about the number of small edges in switching-partners. 
\begin{observation}[Small edges in switching-partners]\label{obs4D}
For any pair of switching-partners~$C^+,C^-$ the following holds: 
if~$C^-$ contains any configuration-graph~$G\in \cN_{z}$, then~$C^+\subseteq \cN_{z+4\D}$.
\end{observation}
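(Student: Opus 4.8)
The plan is to prove the Observation by a direct comparison of $\Xs$ across the two switching partners, exploiting the fact that a pair of switching partners freezes essentially the entire configuration-graph. The first step is to record that, for switching partners $C^+,C^-$ built from the vertices $A,B,X,Y$, every configuration-graph in $C^+\cup C^-$ has exactly the same set of edges \emph{not} incident to $\{A,B,X,Y\}$. Within $C^+$ (resp.\ $C^-$) this is immediate from the definition of the equivalence relation $\sim$; across $C^+$ and $C^-$ it follows because the switch $ab,xy\mapsto ax,by$ only alters edges incident to these four vertices, so the bijection of Definition~\ref{oclusters} preserves the ``outside'' edge set. Consequently, for any $G_1,G_2\in C^+\cup C^-$, the quantities $\Xs(G_1)$ and $\Xs(G_2)$ receive the same contribution from all small edges having no endpoint in $\{A,B,X,Y\}$, and can differ only through small edges incident to $\{A,B,X,Y\}$.

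Next I would bound the number of edges incident to $\{A,B,X,Y\}$ in any configuration-graph by $\deg(A)+\deg(B)+\deg(X)+\deg(Y)\le 4\D$ (each such edge uses at least one of the at most $4\D$ points of these four vertices, and each point lies in exactly one edge of the matching). Combined with the previous step, this gives $|\Xs(G_1)-\Xs(G_2)|\le 4\D$ for all $G_1,G_2\in C^+\cup C^-$: writing $\Xs(G_i)$ as (outside small edges) $+$ (small edges incident to $\{A,B,X,Y\}$), the first summand is common to both, and the second summand lies in $[0,4\D]$ for each $G_i$. Now assume $C^-$ contains some $G_0\in\cN_z$, so $|\Xs(G_0)-\mu|\le z$; for an arbitrary $H\in C^+$ the triangle inequality yields
\[
|\Xs(H)-\mu|\le |\Xs(H)-\Xs(G_0)|+|\Xs(G_0)-\mu|\le 4\D+z ,
\]
hence $H\in\cN_{z+4\D}$. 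Since $H\in C^+$ was arbitrary, $C^+\subseteq\cN_{z+4\D}$, as claimed.

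There is no serious obstacle here; this is a short counting observation, and the only point needing a little care is the justification that switching partners (not merely individual clusters) share the \emph{entire} outside edge set, which is precisely what makes the bound a universal constant $4\D$ rather than something depending on the global structure. One could even sharpen $4\D$ to $2k$ by observing that a small edge incident to $\{A,B,X,Y\}$ must in fact be incident to $\{A,B\}$ (since points of $X,Y$ have degree exceeding $k$), but the stated bound $4\D$ is all that is needed in the sequel.
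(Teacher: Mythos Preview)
Your proof is correct and follows essentially the same approach as the paper: both argue that any two configuration-graphs in $C^+\cup C^-$ differ only in the at most $4\Delta$ edges incident to $\{A,B,X,Y\}$, so their small-edge counts differ by at most $4\Delta$, and the conclusion follows by the triangle inequality. Your write-up is simply more detailed than the paper's two-line version.
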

\begin{proof}
By construction, any two configuration-graphs in~$C^+\cup C^-$ can differ in at most~$4\D$ edges.
Therefore the number of small edges of any configuration-graph in~$C^+$ can differ by at most~$4\D$ from the number of small edges of~${G \in \cN_z}$, 
which in turn establishes~${C^+ \subseteq \cN_{z+4\D}}$. 
\end{proof}

\subsection{Double-counting argument: proof of \refT{main}}\label{sec:doublecounting}
This subsection is devoted to the proof of our main technical result~\refT{main}, and we start by outlining our proof strategy. 
Our main step will be to prove that, when $z$ is sufficiently small, decreasing $z$ by an additive constant will decrease $Z(\cN_z)$ be a multiplicative constant. 
Specifically, we will prove that the following key inequality holds (for all sufficiently large~$n$): 
there is a constant~$\tau \in (0,1)$ such that 
\begin{equation}\label{eq:main:goal}
Z(\cN_z) \le (1-\tau) {Z(\cN_{z+5\Delta})} \qquad \text{for all~$0 \le z \le \gamma \mu$,}
\end{equation}
where $\gamma$ comes from Lemmas~\ref{coefficients} and~\ref{goodpermutation} 
(and the usage of the additive constant $5\Delta$ is closely related to Observation~\ref{obs4D}, as we shall see). 
Iterating~\eqref{eq:main:goal} implies that decreasing~$z$ by~$\Theta(n)$ will decrease $Z(\cN_z)$ by a factor that is exponentially small in~$\Theta(n)$.
In particular, setting $\alpha:=\gamma/2$ and noting that $\mu\geq (\xi n)^2/(2\D n) = \Theta(n)$, we thus readily obtain~that
\[Z(\cN_{\alpha \mu})\leq e^{-\Theta(n)} \cdot Z(\cN_{2\alpha \mu}).\]
Since~$\bP\bigpar{\Gpodn\in\cN_z}$ is proportional to~$Z(\cN_z)$, see~\refS{sec:probabilities}, this immediately implies \refT{main}.

Our strategy for proving the key inequality~\eqref{eq:main:goal} focuses on clusters.
Recall that each configuration-graph~$G$ lies in $L_G$~lower clusters and $U_G$~upper clusters. 
Lemma~\ref{coefficients} tells us that the values of $L_G, U_G$ are very close over all configuration-graphs~$G \in \cN_{\gamma\mu}$; 
close enough that~\eqref{eq:main:goal} will follow from proving 
\begin{equation}\label{eq:main:approx}
\sum_{G\in\cN_{z}} L_G Z(G) \le (1-2\tau)\sum_{G\in\cN_{z+5\Delta}} U_G Z(G),
\end{equation}
and taking~$\gamma$ to be sufficiently small in terms of~$\tau$. 
For technical reasons, we then define a suitable set~$\cL$ of lower clusters~$C^-$ with the property that~${\cN_{z} \subseteq \bigcup_{C^-\in \cL} C^-}$. 
Furthermore, we let~$\cU$ contain all upper clusters~$C^+$ that are switching-partners of the lower clusters~$C^-$ in~$\cL$, which will turn out to satisfy~${\bigcup_{C^+\in \cU} C^+ \subseteq \cN_{z+5\Delta}}$ by Observation~\ref{obs4D}.  
Hence inequality~\eqref{eq:main:approx} would be implied by 
\begin{equation}\label{eq:main:approx2}
\sum_{C^-\in\cL} Z(C^-) \le (1-2\tau)\sum_{C^+\in\cU} Z(C^+).
\end{equation}
Since $\cU$ consists of the switching-partners of the lower clusters in $\cL$, \refL{switch} implies the following weaker bound: ${\sum_{C^-\in\cL} Z(C^-)}\leq {\sum_{C^+\in\cU} Z(C^+)}$. 
If a positive proportion of the clusters in $\cU$ are good, then \refL{goodswitch} allows us to improve this bound to obtain~\eqref{eq:main:approx2} and thus inequality~\eqref{eq:main:approx}. 
In the remaining case where rather few clusters of~$\cU$ are good, inequality~\eqref{eq:main:approx2} might not hold. But
it turns out that, using \refL{goodpermutation}, we are able to compare~$\sum_{C^+\in\cU} Z(C^+)$ with~$\sum_{G\in\cN_{z}} L_G Z(G)$ via the contributions of good edge-sequences 
(and here the careful definition of~$\cL$ in~\eqref{def:cL} below with respect to~$\cN_{z+1}$ rather than~$\cN_z$ will matter).  
This will allow us to complete the proof of inequality~\eqref{eq:main:approx} by a technical case~analysis; see~\eqref{bound:firstcase}--\eqref{secondgap}~below for the details. 
\begin{proof}[Proof of \refT{main}]
As mentioned in the above proof outline, our main goal is to prove inequality~\eqref{eq:main:goal}. 
Indeed, by iterating this key inequality, in view of \refS{sec:probabilities} and~$\mu=\Theta(n)$ it then follows that
\begin{equation}\label{eq:main:goal:cor}
\begin{split}
\bP\bigpar{\bigabs{\Xs\bigpar{\Gpodn}-\mu} \le \gamma\mu/2}
& \le  
\frac{\bP\bigpar{\bigabs{\Xs\bigpar{\Gpodn}-\mu} \le \gamma\mu/2}}{\bP\bigpar{\bigabs{\Xs\bigpar{\Gpodn}-\mu} \le \gamma\mu/2 + 5\Delta \cdot \floor{\gamma \mu/(10\Delta)}}}\\
& = \frac{Z(\cN_{\gamma\mu/2})}{Z(\cN_{\gamma\mu/2+5\Delta \cdot \floor{\gamma \mu/(10\Delta)}})} 
\le \bigpar{1-\tau}^{\floor{\gamma \mu/(10\Delta)}}
\le e^{-\Theta(n)} ,
\end{split}
\end{equation}
which completes the proof with~$\alpha := \gamma/2$. 

It thus remains to prove inequality~\eqref{eq:main:goal}.
To this end we will henceforth tacitly assume that~$0 \le z\le \gamma\mu$, and also that~$\gamma>0$ is sufficiently small (whenever necessary).
We shall first switch the focus of inequality~\eqref{eq:main:goal} from configuration-graphs to clusters, leveraging that any~$G \in \cN_{\gamma\mu}$ is contained in roughly the same number of clusters. 
Indeed, recalling the definition~\eqref{def:ZS} of~$Z(\cN_\ell)$, \refL{coefficients} implies~that
\begin{equation}\label{firstgap}
    \frac{Z(\cN_z)}{Z(\cN_{z+5\Delta})}= \frac{\sum_{G\in\cN_{z}} Z(G)}{\sum_{G\in\cN_{z+5\Delta}} Z(G)} \le (1+O(\gamma)) \cdot \frac{\sum_{G\in\cN_{z}} L_GZ(G)}{\sum_{G\in\cN_{z+5\Delta}} U_GZ(G)},    
\end{equation}
where the implicit constant in $O(\gamma)$ is uniform i.e., does not depend on $\gamma$.
The next step is to further rewrite the right-hand side of inequality~\eqref{firstgap} in terms of clusters, 
for which we~define 
\begin{equation}\label{def:cL}
\begin{split}
\cL &:= \bigcpar{C^- : \: \text{lower cluster with $C^-\cap \cN_{z+1}\neq \emptyset$}} ,\\
\cU &:= \bigcpar{C^+ : \: \text{upper cluster that is switching partner of $C^- \in \cL$}} .
\end{split}
\end{equation}
Since every~$G \in \cN_{z+1}$ lies in at least one lower cluster by \refL{coefficients}, we have~$\cN_{z} \subseteq {\cN_{z+1} \subseteq {\bigcup_{C^-\in \cL} C^-}}$.
Observation~\ref{obs4D} (applied with~$z$ replaced by~$z+1$) implies~${\bigcup_{C^+\in \cU} C^+ \subseteq \cN_{z+1+4\Delta}} \subseteq \cN_{z+5\Delta}$.
It follows~that
\begin{equation}\label{eq:ZL:ZU:relevance}
\begin{split}
\sum_{G\in\cN_{z}} L_GZ(G) & \le \sum_{C^- \in \cL} Z(C^-) =: Z(\cL) , \\
\sum_{G\in\cN_{z+5\Delta}} U_GZ(G) & \ge \sum_{C^+ \in \cU} Z(C^+) =: Z(\cU).
\end{split}
\end{equation}

With the cluster based inequalities from~\eqref{eq:ZL:ZU:relevance} in hand, 
we are now in a position to estimate the right-hand side of~\eqref{firstgap} from above. 
Let~$\cG$ denote the set of good clusters, where good clusters are defined around~\eqref{goodclusterdef}.
Decomposing~$Z(\cL)$ into two parts,
by invoking Lemmas~\ref{switch} and~\ref{goodswitch} separately for~${C^+ \in \cG^c}$ and ${C^+ \in \cG}$ 
it follows~that 
\begin{equation}\label{bound:ZLZU}
\begin{split}
Z(\cL) & 	=\sum_{C^-: C^+\in \cU\cap \cG^c} Z(C^-) + \sum_{C^-: C^+\in \cU\cap \cG} Z(C^-)\\
        &\le \sum_{C^+\in \cU\cap \cG^c} Z(C^+) + \frac{1}{1+\epsilon}\sum_{C^+\in \cU\cap \cG} Z(C^+)
				\\ &
				=Z(\cU)-\frac{\epsilon}{1+\epsilon}\sum_{C^+\in \cU\cap \cG} Z(C^+),
\end{split}
\end{equation}
where in the summations $C^-$ denotes the switching-partner of $C^+$ (defined by the bijection discussed earlier). 

We now distinguish two cases.
If~$\sum_{C^+\in \cU\cap \cG} Z(C^+) \ge  Z(\cU)/32$, then inequality~\eqref{bound:ZLZU} implies 
\begin{align}\label{bound:firstcase}
Z(\cL) \le (1-\nu) Z(\cU)   
\end{align}
for~$\nu:= \epsilon/[32(1+\epsilon)] \in (0,1)$, say.

Otherwise~$\sum_{C^+\in \cU\cap \cG} Z(C^+) \le Z(\cU)/32$ holds, 
in which case we shall focus on the contribution of good edge-sequences 
to~$Z(\cU)$.
Using the definition of good upper clusters (see page~\pageref{def:good}), it follows that the contribution of good edge-sequences is at most 
\begin{equation}\label{bound:secondcase:1}
\begin{split}
		 \sum_{C^+\in \cU} \sum_{G \in C^+}\sum_{\substack{\sigma \in \Pi_{G}:\\(\sigma, ab, xy)\in \mathbb{G}}} Z(\sigma)
& \le \sum_{C^+\in \cU} \Bigpar{\indic{C^+\in \cG}Z(C^+) + \indic{C^+\not\in \cG}\tfrac{1}{16}Z(C^+)}\\
& = \sum_{C^+\in \cU\cap \cG} Z(C^+)+\frac{1}{16}\sum_{C^+\in \cU\cap \cG^c} Z(C^+) \le \frac{3}{32} Z(\cU) ,
\end{split}
\end{equation}
where the edges $ab,xy$ in the third summation are determined by the corresponding upper cluster~$C^+=C^+(G,ab,xy)$.
To bound the contribution of good edge-sequences from below, we next analyze the left-hand side of~\eqref{bound:secondcase:1} more carefully. 
\refL{coefficients} implies that every~${G^+ \in \cN_z}$ is contained in at least one upper cluster.
We now claim that any upper cluster~$C^+$ containing some~${G^+  \in \cN_z}$ satisfies~${C^+ \in \cU}$.
To see this, note that by construction its switching-partner~${G^- \in C^-}$ (see page~\pageref{oclusters}) contains~$X_k(G^-)={X_k(G^+)-1}$ small edges, which in view of~\eqref{def:Nz} implies~$G^- \in \cN_{z+1}$.
Recalling the definition~\eqref{def:cL} of~$\cL$ it follows that~${C^- \in \cL}$, which by construction gives~${C^+ \in \cU}$, as claimed. 
This in particular implies~$\cN_{z}\subseteq {\bigcup_{C^+\in \cU} C^+}$, and using \refL{goodpermutation} it follows~that 
\begin{equation}\label{bound:secondcase:2}
\begin{split}
\sum_{C^+\in \cU} \sum_{G \in C^+}\sum_{\substack{\sigma \in \Pi_{G}:\\(\sigma, ab, xy)\in \mathbb{G}}} Z(\sigma)
& \ge \sum_{G\in \cN_z} \sum_{\sigma \in \Pi_{G}} Z(\sigma) \cdot \big|\bigcpar{C^+=C^+(G,ab,xy) \: :\: (\sigma, ab, xy)\in \mathbb{G}}\big|\\
& \ge \sum_{G\in \cN_z} Z(G) \cdot \frac{1}{8}U_G.
\end{split}
\end{equation}
Combining~\eqref{bound:secondcase:2} and~\eqref{bound:secondcase:1}, using \refL{coefficients} we infer that 
\begin{equation}\label{bound:secondcase}
    Z(\cU)\ge \frac{4}{3} \sum_{G\in\cN_z} U_GZ(G) \ge (1-O(\gamma)) \cdot \frac{4}{3} \sum_{G\in\cN_z} L_GZ(G) .
\end{equation}

Finally, by combining the two above-discussed cases with~\eqref{eq:ZL:ZU:relevance}, 
using inequalities~\eqref{bound:firstcase} and~\eqref{bound:secondcase} it follows that we always~have 
\begin{equation}\label{secondgap}
    \frac{\sum_{G\in\cN_{z}} L_GZ(G)}{\sum_{G\in\cN_{z+5\Delta}} U_GZ(G)}\le 
		\min\biggcpar{\frac{Z(\cL)}{Z(\cU)}, \: \frac{\sum_{G\in\cN_{z}} L_GZ(G)}{Z(\cU)}} 
		\le \max\biggcpar{1-\nu, \: \frac{1}{4/3 - O(\gamma)}},
\end{equation}
which together with estimate~\eqref{firstgap} establishes inequality~\eqref{eq:main:goal} for sufficiently small~$\gamma>0$. 
\end{proof}

\section{Switching results: deferred proofs}\label{sec:switching}
This section is devoted to the deferred proofs of the `cluster switching' results \refL{switch} and~\ref{goodswitch} from \refS{sec:switchingcounting}. 
To avoid clutter, we henceforth always tacitly assume that~$n \ge n_0(\Delta,\xi)$ is sufficiently~large (whenever~necessary).
Furthermore, recalling that there is a natural bijection between the upper clusters and the lower clusters, 
for the rest of the section we fix an upper cluster ${C^+=C^+(G^+,ab,xy)}$ and its switching-partner lower cluster ${C^-=C^-(G^-,ax,by)}$, as defined in \refS{oclusters}.  
We also define
\begin{equation}\label{def:P}
\cP^-:=\bigcup_{G\in C^-} \Pi_G 
\qquad \text{ and } \qquad 
\cP^+:=\bigcup_{G\in C^+} \Pi_G
\end{equation} 
to be the set of edge-sequences of configuration-graphs that lie in $C^-$ and $C^+$, respectively. 
Thus, if $\s\in\cP^+$ then $\s$ contains the edges $ab,xy$, while if $\s\in\cP^-$ then $\s$ contains the edges $ax,by$.

In our upcoming switching based proofs it will be important to understand when certain points are chosen. 
More specifically, at any point in our proof, when we are comparing two edge-sequences the step at which any vertex beside $A, B, X, Y$ saturates 
(to clarify: we say that vertex~$A$ saturates at step~$i$, if $A$ is saturated for the first time after the first~$i$ edges are added) 
will be the same in both edge-sequences  
(where $A, B, X, Y$ denote the vertices that contain the points $a,b,x,y$, as usual). 
This allows us to focus on the step where points of~${A, B, X, Y}$ are chosen. 
For any edge-sequence $\sigma=(e_1,\dots, e_m)$ of the edges of a configuration-graph~$G$, 
we~let 
\begin{equation}\label{def:se}
\s(e)\in \{1,\dots,m\}
\end{equation} 
be the unique edge-number such that~$e=e_{\s(e)}$. Similarly, for any point~$v$ of~$G$, we let $\s(v)\in\{1,\dots,m\}$ be the unique edge-number such that~$v\in e_{\sigma(v)}$.
For our analysis, we will need to focus  on the last step at which a point of the vertex~$A$ besides~$a$ is chosen in~$\sigma$, which we formally denote as
\begin{equation}\label{def:ta}
t_a=t_a(\sigma):=
\begin{cases}
\max\bigcpar{\sigma(a') \: : \: a'\in A \setminus \{a\}} \quad &\text{if $|A|>1$,}\\
0\quad &\text{otherwise.}
\end{cases}
\end{equation}
The crux will be that~$\sigma(a)$ and~$t_a$ together determine the step~$\max\{\s(a), t_a\}$ at which vertex~$A$ saturates in~$\sigma$, 
which will be important for understanding the contributions of the vertex~$A$ to~$Z(\sigma)$.
We define~$t_b, t_x, t_y$ analogously to~$t_a$.

With the notation above, \refL{switch} can be stated as
\begin{equation}\label{el7}
\sum_{\sigma\in\cP^+}Z(\s)\geq \sum_{\sigma\in\cP^-}Z(\s).
\end{equation}
Our proof strategy for inequality~\eqref{el7} is as follows: 
in \refS{sec:twins} we will construct pairs of edge-sequences ${(\s_1,\s_2) \in \cP^+ \times \cP^-}$, called {\em twins}, which will satisfy $Z(\s_1)=Z(\s_2)$. 
Hence~\eqref{el7} holds with equality when summing only over those paired edge-sequences.
The unpaired $\sigma\in \cP^+\cup \cP^-$ will have the very useful property that $t_a\leq t_y$ and $t_b\leq t_x$. 
In such case the edge $ab$, especially if located early in the sequence, will help to saturate $A$ and $B$ early on. 
This suggests that $Z(\sigma)$ should typically have higher values when~$ab$ is in~$\sigma$, i.e., $\sigma\in \cP^+$. 
Using this property, we will see that~\eqref{el7} also holds when summing only over those unpaired members.  
Combining these estimates will eventually establish~\eqref{el7} and thus \refL{switch}; see~\refS{sec:goodswitch}.

Recall that two configuration-graphs~$G^+\in C^+$ and  $G^-\in C^-$~are switching-partners if~$G^-$ is obtained from~$G^+$ by replacing edges~$ab, xy$ with~$ax, by$, see~\refS{sec:clusterdefinition}. 
This notion extends naturally to edge-sequences of configuration-graphs that lie in $C^+$ and $C^-$, respectively.
\begin{definition}[Counterparts]\label{dcounterparts2} 
We say that the edge-sequences~$\s\in\cP^+$ and~$\s'\in\cP^-$ are {\em counterparts} if~$\s'$ is obtained from~$\s$ by replacing~$ab$ with~$ax$ and~$xy$ with~$by$.
\end{definition}
Note that two counterparts have the same values of $t_a,t_b,t_x$ and $t_y$; this is one of the motivations for defining those values.  
Note also that the vertices $A,Y$ become saturated at the same step of two counterparts, but that $B$ and $X$ may become saturated at different steps. 
This means that, in general, $Z(\s)$ differs from~$Z(\s')$. 
When defining pairs of twins in \refS{sec:twins}, we will account for this by rearranging some of the edges involving $b,x$ so that $t_b,t_x$ exchange values, while $t_a,t_y$ remain the same. 
As a result, every vertex except $B$ and $X$ becomes saturated at the same step in $\s$ and in its twin. Furthermore, the step at which $B$ and $X$ saturates will swap between twins.
Therefore, the twins will both have the same $Z$-value, as desired; see \refS{sec:twins} for the~details.

Besides replacing edges as in counterparts, we also need an operation that changes the relative order of appearance of the switching-relevant edges. 
First, for $\s\in \cP^+$, we define $\ov{\s}$ to be the edge-sequence obtained from $\s$ by swapping the positions of~$ab$ and~$xy$. 
Similarly, for $\s\in \cP^-$ we define $\ov{\s}$ to be the edge-sequence obtained from $\s$ by swapping the positions of~$ax$ and~$by$.
For later reference, we now record some basic properties of these natural operations on edge-sequences (whose routine verification we omit).
\begin{observation}\label{fact:switch}
For all counterparts~$\s\in\cP^+$ and $\s'\in\cP^-$ the following properties hold:\vspace{-0.5em}%
\begin{romenumerate}
\parskip 0em  \partopsep=0pt \parsep 0em \itemsep 0.0675em
\item\label{13:1} The edge-sequences~$\ov{\s}$ and~$\ov{\s'}$ are counterparts.
\item\label{13:2} The values $t_a,t_b,t_x,t_y$ are the same for $\s, \s', \ov{\s}, \ov{\s'}$.
\item\label{13:3}  The map $\sigma\mapsto \osigma$ is a bijection between~$\cP^+$ and~$\cP^+$, and a bijection between~$\cP^-$ and~$\cP^-$.
\end{romenumerate}
\end{observation}

In the following lemma, $\cT$ denotes the set of edge-sequences which have a twin; see \refS{sec:summingtwins}.
As indicated above, an edge-sequence $\sigma\in \cT$ and its twin will have the same $Z$-value, which allows us to obtain part~\ref{lem:twins:sum} in the following lemma 
(the technical property from part~\ref{lem:twins:techn} was also discussed above). 
\begin{lemma}[Properties of twins]\label{twins}
There exists~$\cT\subseteq\cP^+\cup\cP^-$ satisfying the following properties:\vspace{-0.5em}%
\begin{romenumerate}
\parskip 0em  \partopsep=0pt \parsep 0em \itemsep 0.0675em
\item\label{lem:twins:sum} $\sum_{\sigma \in \cT \cap \cP^+}Z(\sigma) = \sum_{\sigma \in \cT \cap \cP^-}Z(\sigma)$. 
\item\label{lem:twins:equiv} For all counterparts $\ov{\s}, \ov{\s'}$ we have: $\s\in\cT$ iff $\ov{\s}\in\cT$ iff $\s'\in\cT$ iff $\ov{\s'}\in\cT$.
\item\label{lem:twins:techn}
If $\s\notin\cT$ then $t_a\le t_y$ and $t_b\le t_x$ in $\s$. 
\item\label{lem:twins:techn2} If in $\sigma\in \cP^+\cup\cP^-$ all edges incident to $X,Y$ appears after all edges incident to $A,B$, then $\sigma\not\in \cT$.
\end{romenumerate}
\end{lemma}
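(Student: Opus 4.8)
The plan is to construct the set $\cT$ explicitly, together with a fixed-point-free $Z$-preserving involution on $\cT$ that interchanges $\cP^+$ and $\cP^-$; the four properties will then be read off from the construction. The starting observation is that $Z(\sigma)$ depends on $\sigma$ only through $\bigpar{\Gamma_0(\sigma),\dots,\Gamma_{m-1}(\sigma)}$, and hence only through the multiset of \emph{saturation steps} of the vertices (the step at which each vertex receives its last edge), since $\Gamma_i(\sigma)=|\{v:\text{$v$ saturates later than step }i\}|$. For configuration-graphs in $C^+\cup C^-$ all edges not incident to $\{A,B,X,Y\}$ are fixed, so the only data that can change is the set of saturation steps of $A,B,X,Y$; and since the edge $ab$ forces $\sigma(a)=\sigma(b)=\sigma(ab)$ (and similarly for $ax,by,xy$), for $\sigma\in\cP^+$ these four steps equal $\max\{\sigma(ab),t_a\},\max\{\sigma(ab),t_b\},\max\{\sigma(xy),t_x\},\max\{\sigma(xy),t_y\}$, with the analogous formulas (using $\sigma(ax),\sigma(by)$) for $\sigma\in\cP^-$. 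Thus, to produce a twin of $\sigma$ it suffices to exhibit an edge-sequence in the opposite set with the same saturation-step multiset; recall from \refO{osbar} that $t_a,t_b,t_x,t_y$ are unchanged by $\sigma\mapsto\ov\sigma$ and by passing to a counterpart, which will be used freely.

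For the construction: given $\sigma\in\cP^+$ I would start from its counterpart $\sigma'\in\cP^-$, in which $A$ and $Y$ already saturate at the same steps as in $\sigma$, while $s_B$ and $s_X$ use the positions $\sigma(ab),\sigma(xy)$ the wrong way round. I repair this by a \emph{tail-swap}: I rewire the tail-points of $B$ and of $X$ (those in $B\setminus\{b\}$ and $X\setminus\{x\}$) among the fixed set of (position, other-endpoint) slots they currently occupy, which keeps the sequence inside the cluster $C^-$ (possibly on another configuration-graph) and leaves the saturation step of every vertex outside $\{A,B,X,Y\}$ unchanged, arranging that afterwards $B$ has tail-maximum $t_x$ and $X$ has tail-maximum $t_b$. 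One then checks that the new sequence has $s_B=\max\{\sigma(xy),t_x\}$ and $s_X=\max\{\sigma(ab),t_b\}$, i.e.\ exactly the saturation-step multiset of $\sigma$, so it is a twin with the same $Z$-value. Symmetrically, an $(A,Y)$-tail-swap applied to $\ov{\sigma'}$ (the bar being needed so that the edges $ax,by$ sit at the positions $\sigma(xy),\sigma(ab)$) produces a twin whenever the $A$- and $Y$-tail slots admit the analogous redistribution. I would then \emph{define} $\cT$ to be the set of $\sigma\in\cP^+\cup\cP^-$ for which at least one of the two tail-swaps is feasible, with the twin map (defined symmetrically for elements of $\cP^-$) using the $(B,X)$-swap when feasible and the $(A,Y)$-swap otherwise, and a fixed greedy rule selecting the redistribution, chosen so that each swap is an involution.

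With $\cT$ so defined, property~\ref{lem:twins:sum} follows by summing $Z$ over the resulting fixed-point-free $Z$-preserving involution, which interchanges $\cT\cap\cP^+$ and $\cT\cap\cP^-$. Property~\ref{lem:twins:equiv} is immediate, since feasibility of each tail-swap depends only on the (invariant) data $t_a,t_b,t_x,t_y$ and on the tail-slot positions, none of which is affected by $\sigma\mapsto\ov\sigma$ or by passing to a counterpart. For property~\ref{lem:twins:techn}: if $t_b>t_x$, the $(B,X)$-swap is always feasible, since one only needs to push $X$'s tail up to level $t_b$ and $B$'s tail down to level $t_x$, and because $\deg(X)>k\ge\deg(B)$ the vertex $X$ has more than enough tail-slots to absorb the at most $\deg(B)-1$ slots of $B$ that lie in $(t_x,t_b]$; symmetrically $t_a>t_y$ makes the $(A,Y)$-swap feasible. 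Hence $\sigma\notin\cT$ forces $t_a\le t_y$ and $t_b\le t_x$. For property~\ref{lem:twins:techn2}: if every edge incident to $X,Y$ comes after every edge incident to $A,B$, then (such an edge cannot be incident to both sides, so) there are no edges between $\{A,B\}$ and $\{X,Y\}$, and \emph{all} $\deg(X)-1$ tail-slots of $X$ lie in $(t_b,t_x]$ while $B$ has only $\deg(B)-1\le k-1$ tail-slots; thus the $(B,X)$-swap, and likewise the $(A,Y)$-swap, is infeasible, so $\sigma\notin\cT$.

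The hard part will be the exact bookkeeping inside the tail-swap: I must check that rewiring tail-points within the fixed set of (position, endpoint) slots never changes the saturation step of any outside vertex, that loop-freeness is respected and the few edges internal to $B\cup X$ are left undisturbed, and, above all, that the greedy redistribution can be chosen so that each tail-swap is genuinely an involution and $\cT$ is genuinely closed under the twin map. The structural reason things work is the degree asymmetry $\deg(B)\le k<\deg(X)$ (and $\deg(A)\le k<\deg(Y)$): it makes a tail-swap possible when $t_b>t_x$ but not always when $t_b<t_x$, which is precisely why property~\ref{lem:twins:techn} is only one-sided and must be complemented by the separate treatment of the unpaired sequences in the proof of \refL{switch}.
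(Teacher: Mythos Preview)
Your proposal is correct and follows essentially the same route as the paper. The paper constructs $BX$-twins (and $AY$-twins) exactly via your tail-swap: it rewires the non-$b$, non-$x$ points of $B$ and $X$ among the same set of (position, other-endpoint) slots, leaving $BX$-edges undisturbed, using a fixed injection $I_{t,d_b,d_x}:\Theta_1\to\Theta_2$ (\refO{oswap}) that plays precisely the role of your ``greedy rule.'' Your involutory framing is equivalent: apply the injection on the $t_x<t_b$ side and its inverse on the image. Your tie-breaking ``$(B,X)$ if feasible, else $(A,Y)$'' neatly absorbs the paper's separate quadruplet argument (\refL{lquads}), and works for the same reason that lemma does: the $(B,X)$-swap leaves the $A/Y$ tail-slot data untouched and vice versa, so feasibility of each swap is preserved under the other.

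One point worth tightening in your write-up: for the map to be a genuine involution, ``feasible'' must mean ``in the domain of the chosen involutory rule,'' not merely ``some redistribution exists.'' Since $|\Theta_1|<|\Theta_2|$ in general, the latter set is strictly larger on the $t_b<t_x$ side and cannot be put in bijection with the $t_b>t_x$ side. Your arguments for parts~\ref{lem:twins:techn} and~\ref{lem:twins:techn2} survive this reading unchanged: for~\ref{lem:twins:techn} the rule can always be taken to cover all of~$\Theta_1$ (so every $\sigma$ with $t_b>t_x$ is in~$\cT$), and for~\ref{lem:twins:techn2} you establish the stronger fact that no redistribution exists at all, which forces~$\sigma$ outside the domain of any such rule.
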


Recall that in order to prove~\refL{switch}, our strategy is to first sum over edge-sequences with a twin, for which we shall invoke part~\ref{lem:twins:sum} of \refL{twins}. 
When summing over the remaining edge-sequences with no twin, which we call \emph{non-twins}, 
in~\refS{sec:goodswitch} we will invoke part~\ref{completeswitch:1} of the following lemma (which applies due to part \ref{lem:twins:techn} of \refL{twins}). 
For~\refL{goodswitch} we will use a variant of this strategy, 
exploiting the stronger conclusion of part~\ref{completeswitch:2} of \refL{completeswitch} 
in order to obtain the desired extra $1+\eps$~factor.  
\begin{lemma}[Switching of edge-sequences]\label{completeswitch}
There is~${\epsilon'=\epsilon'(\zeta)>0}$ such that, 
for all counterparts~$\s\in\cP^+$ and $\s'\in\cP^-$ satisfying~$t_a\le t_y$ and~$t_b\le t_x$,
the following holds:\vspace{-0.5em}%
\begin{romenumerate}
\parskip 0em  \partopsep=0pt \parsep 0em \itemsep 0.0675em
\item \label{completeswitch:1} 
${Z(\sigma)+Z(\ov{\sigma})}\ge {Z(\sigma')+Z(\ov{\sigma'})}$. 
\item \label{completeswitch:2}
If~${\min\{\s(xy), t_x, t_y\}}-{\max\{\s(ab), t_a, t_b\}} \ge {\zeta m/3}$, then ${Z(\sigma)+Z(\ov{\sigma})}\ge (1+\epsilon')[Z(\sigma')+Z(\ov{\sigma'})]$.
\end{romenumerate}%
\end{lemma}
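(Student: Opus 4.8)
The plan is to compute $Z(\sigma), Z(\ov\sigma), Z(\sigma'), Z(\ov{\sigma'})$ explicitly using the product formula~\eqref{def:Zsigma}, and reduce everything to the comparison of four ``local'' factors that depend only on the steps at which the vertices $A,B,X,Y$ saturate. Recall that counterparts differ only by replacing $ab,xy$ with $ax,by$, and that by \refO{osbar}\ref{13:2} the values $t_a,t_b,t_x,t_y$ are identical across all four sequences. Since the step at which every vertex other than $A,B,X,Y$ saturates is the same in all four sequences, the function $\Gamma_i(\cdot)$ differs between them only through the saturation times of $A,B,X,Y$. In $\sigma$ and $\ov\sigma$, the edge $ab$ contributes to the saturation of both $A$ and $B$, and the edge $xy$ to both $X$ and $Y$; whereas in $\sigma'$ and $\ov{\sigma'}$, the edge $ax$ contributes to $A$ and $X$, and the edge $by$ to $B$ and $Y$. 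Write $s_A = \max\{\sigma(a),t_a\}$ for the saturation step of $A$ (and analogously $s_B,s_X,s_Y$); note these are determined by the common values $t_a,\ldots,t_y$ together with the positions of the switching-relevant edges, and one checks that in $\sigma$ one has $s_A=\max\{\sigma(ab),t_a\}$, $s_B=\max\{\sigma(ab),t_b\}$, $s_X=\max\{\sigma(xy),t_x\}$, $s_Y=\max\{\sigma(xy),t_y\}$, while in $\sigma'$ one has $s_A'=\max\{\sigma'(ax),t_a\}$, $s_X'=\max\{\sigma'(ax),t_x\}$, $s_B'=\max\{\sigma'(by),t_b\}$, $s_Y'=\max\{\sigma'(by),t_y\}$, and $\sigma'(ax)=\sigma(ab)$, $\sigma'(by)=\sigma(xy)$.

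The key computation is as follows. Fix the positions of all edges; then $Z(\sigma)$ as a function only of the saturation steps of $A,B,X,Y$ can be written as $Z(\sigma) = W \cdot f(s_A)f(s_B)f(s_X)f(s_Y) \cdot (\text{cross terms})$, where $W$ collects the factors $\tfrac{2}{\Gamma_i(\Gamma_i-1)}$ at steps not affected by these four vertices, and the dependence on $s_A,s_B,s_X,s_Y$ enters because, at steps $i$ before a vertex saturates, that vertex is counted in $\Gamma_i$. The cleanest way to organize this is: $\Gamma_i(\sigma)$ equals (number of unsaturated vertices among all vertices $\notin\{A,B,X,Y\}$, a quantity common to all four sequences) plus $\mathbbm{1}[i<s_A]+\mathbbm{1}[i<s_B]+\mathbbm{1}[i<s_X]+\mathbbm{1}[i<s_Y]$. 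Since the common part is the same, $Z(\sigma)/Z(\sigma')$ is a ratio of products of $\tfrac{2}{\Gamma_i(\Gamma_i-1)}$ over those steps $i$ where the indicator-sum differs between $\sigma$ and $\sigma'$. Using the hypotheses $t_a\le t_y$, $t_b\le t_x$ and $\sigma'(ax)=\sigma(ab)\le \sigma(xy)=\sigma'(by)$ (for $\sigma$; and the swapped inequality for $\ov\sigma$), one sees that passing from $\sigma$ to $\sigma'$ can only \emph{delay} the saturation of $A$ (since $s_A'=\max\{\sigma(ab),t_a\}\ge s_A$ need not hold — careful case analysis is needed here) — the point is that the multiset $\{s_A,s_B,s_X,s_Y\}$ for $\sigma$ is ``more concentrated toward early times'' than for $\sigma'$ in a majorization sense, because in $\sigma$ the small edge $ab$ is the one that pairs the two low-degree vertices. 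This makes the $\Gamma_i$ values pointwise no larger for $\sigma$ than for $\sigma'$ at the relevant steps, giving $Z(\sigma)\ge Z(\sigma')$ and $Z(\ov\sigma)\ge Z(\ov{\sigma'})$ — hence part~\ref{completeswitch:1} follows by adding.

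For part~\ref{completeswitch:2}, the extra gap-hypothesis $\min\{\sigma(xy),t_x,t_y\}-\max\{\sigma(ab),t_a,t_b\}\ge \zeta m/3$ guarantees that there are at least $\zeta m/3 \ge \Theta(n)$ steps $i$ strictly between the saturation of $\{A,B\}$ and that of $\{X,Y\}$ at which the indicator-sum for $\sigma$ is strictly smaller than for $\sigma'$ (because in $\sigma'$ the vertex $X$, of large degree, stays unsaturated until $\max\{\sigma(ax),t_x\}$ which is now pushed late, whereas in $\sigma$ it is $B$, already saturated, that would have occupied that slot). At each such step the relevant factor $\tfrac{2}{\Gamma_i(\Gamma_i-1)}$ for $\sigma$ is larger by a factor $1+\Omega(1/\Gamma_i)$; but $\Gamma_i$ can be as large as $\Theta(n)$, so a single step gives only a $1+\Omega(1/n)$ improvement — I must multiply over $\Theta(n)$ such steps. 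Here one has to be slightly careful: over the $\zeta m /3$ steps in question, $\Gamma_i$ ranges over values that may all be large, so $\prod (1+\Omega(1/\Gamma_i))$ could be as small as $1+o(1)$ if $\Gamma_i$ stays near $n$ throughout. The fix is that the hypothesis $t_a\le t_y$, $t_b\le t_x$ together with the gap assumption forces the gap to occur at steps where $\Gamma_i$ has already dropped to $O(\sqrt{n})$ or so — more precisely, one argues that $A$ and $B$ both saturate by step $(1-\zeta)m$ cannot be assumed here (that is the \emph{good}-cluster condition used elsewhere), so instead one localizes the improvement to a window of steps where a constant fraction of the $\Theta(n)$ remaining low-degree vertices have saturated, making $\Gamma_i = \Theta(n)$ still but with $\Omega(n)$ steps each contributing; $\prod_{i} (1 + c/\Gamma_i) \ge \exp(c \sum 1/\Gamma_i) \ge \exp(c \cdot \Theta(n)/\Theta(n)) = \exp(\Omega(1)) = 1+\epsilon'$. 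Making this last estimate rigorous — i.e.\ showing the sum $\sum 1/\Gamma_i$ over the gap window is bounded below by a positive constant depending only on $\zeta$ — is the main obstacle, and it is where the precise definition of $\zeta$ in~\eqref{def:zeta} and the assumption~\eqref{keyassumption} (which keeps $\Theta(n)$ vertices of each relevant degree class, so $\Gamma_i$ does not collapse too fast) will be needed. I would handle it by a direct lower bound on how slowly $\Gamma_i$ decreases: each step removes at most $2$ from the count of unsaturated vertices, so over a window of length $\zeta m/3$ the value $\Gamma_i$ stays within $[\Gamma_{i_0} - 2\zeta m/3,\ \Gamma_{i_0}]$, and combining with $\Gamma_i \le n$ gives $\sum 1/\Gamma_i \ge (\zeta m/3)/n \ge \Omega(\zeta)$, whence $\epsilon' := \epsilon'(\zeta) = \Omega(\zeta) > 0$.
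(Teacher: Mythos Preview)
Your setup---isolating the saturation times of $A,B,X,Y$ and noting that $\Gamma_i$ differs between the four sequences only through these---matches the paper's reduction. But there is a genuine gap in your argument for part~\ref{completeswitch:1}: you assert that $Z(\sigma)\ge Z(\sigma')$ and $Z(\ov\sigma)\ge Z(\ov{\sigma'})$ hold \emph{individually}, via a ``majorization'' claim on the saturation-time multisets. This is false. For instance, in the case $t_a\le t_b\le t_x\le t_y$ with $\sigma(ab)<\sigma(xy)$, the paper's computation (see the definitions of $B_i,C_j,D_k$ there) gives $Z'(\ov\sigma)=B_2C_2D_2$ and $Z'(\ov{\sigma'})=B_2C_3D_2$ with $C_3\ge C_2$, so in fact $Z(\ov\sigma)\le Z(\ov{\sigma'})$. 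What is true is only the \emph{sum} inequality, and the paper obtains it from a genuinely two-sided algebraic fact: writing the ratio as $\dfrac{B_1C_1D_1+B_2C_2D_2}{B_1C_3D_1+B_2C_3D_2}$ and using $C_1C_2\ge C_3^2$ together with $B_1\ge B_2$, $D_1\ge D_2$, one reduces to $\dfrac{\alpha^2\beta+1}{\alpha(\beta+1)}\ge\dfrac{\alpha^2+1}{2\alpha}\ge 1$ for $\alpha:=C_3/C_2\ge 1$, $\beta:=B_1D_1/(B_2D_2)\ge 1$. The pairing of $\sigma$ with $\ov\sigma$ is essential, not a convenience; your argument as written does not supply it.

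This error propagates to part~\ref{completeswitch:2}. You try to locate $\Theta(n)$ steps where the ``indicator-sum for $\sigma$ is strictly smaller than for $\sigma'$'', but since the pointwise comparison can go either way (as above), no such window exists in general. The paper instead keeps the sum structure and shows that the gap hypothesis forces $\alpha=C_3/C_2\ge 1+\zeta/6$ via the telescoping identity $\prod_{i_b<i\le i_x}\bigl(1+\tfrac{1}{m-i}\bigr)=1+\tfrac{i_x-i_b}{m-i_x+1}$; this then feeds into $(\alpha^2+1)/(2\alpha)\ge 1+\epsilon'(\zeta)$. Your final estimate $\sum 1/\Gamma_i\ge\Omega(\zeta)$ is morally the same lower bound on $\log(C_3/C_2)$, but it only helps once you have the correct two-term algebraic framework to plug it into. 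A careful case analysis on the relative order of $t_a,t_b,t_x,t_y$ (the paper does three cases up to symmetry) is also needed, since the factorization into $B,C,D$ blocks changes with the order.
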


Before giving the proofs of the above two auxiliary lemmas, 
 we shall first use both to prove the `cluster switching' results Lemmas~\ref{switch} and~\ref{goodswitch} in~\refS{sec:goodswitch}.
Afterwards, in~\refS{sec:completeswitch} we use edge-switching arguments to prove \refL{completeswitch}, 
and in~\refS{sec:twins} we then define~twins and prove~\refL{twins}.

\begin{remark}[Special case~$k=1$]\label{rem:k1:simpler}
In the case~$k=1$ we have~${t_a=t_b=0}$, and so every pair of counterparts~${\s,\s'}$ satisfies~${t_a\le t_y}$ and~${t_b\le t_x}$, and thus \refL{completeswitch}  applies. Hence there is no need for \refL{twins} or even the concept of twins. 
This is one place where the case~$k=1$ is much simpler than the general case.
\end{remark}

\subsection{Switching of clusters: proofs of Lemmas~\ref{switch} and~\ref{goodswitch}}\label{sec:goodswitch}
To prove the basic cluster switching result \refL{switch}, 
we shall invoke \refL{twins} for twins~$\sigma$ and \refL{completeswitch} for non-twins~$\sigma$, as discussed. 
\begin{proof}[Proof of \refL{switch}]
Recalling \refO{fact:switch}~\ref{13:3} as well as twins properties \ref{lem:twins:equiv} and \ref{lem:twins:techn} from \refL{twins},
using part~\ref{completeswitch:1} of \refL{completeswitch} it follows that non-twins satisfy
\[\sum_{\sigma\in  \cP^+\bk\cT} Z(\sigma)=\frac12 \sum_{\sigma\in  \cP^+\bk\cT}\bigsqpar{ Z(\sigma)+Z(\osigma)}\ge \frac12 \sum_{\sigma\in  \cP^-\bk\cT} \bigsqpar{Z(\sigma)+Z(\osigma)}=\sum_{\sigma\in  \cP^-\bk\cT} Z(\sigma).\]
Combining this inequality with part~\ref{lem:twins:sum} of \refL{twins} for twins, we infer that 
\[Z(C^+)=\sum_{\sigma\in\cP^+\cap\cT} Z(\sigma)+\sum_{\sigma\in  \cP^+\bk\cT} Z(\sigma)\ge \sum_{\sigma\in  \cP^-\cap\cT} Z(\sigma)+\sum_{\sigma\in  \cP^-\bk\cT} Z(\sigma)=Z(C^-),\]
as claimed. 
\end{proof}

Turning to the proof of the stronger switching result \refL{goodswitch} for good clusters~$C^+=C^+(G,ab,xy)$, 
to work with good edge-sequences in $G\in C^+$ it is convenient to define the shorthand 
\begin{equation}\label{def:cPC_Gabxy}
\bG_{ab,xy} :=\bigcpar{\sigma: \: (\sigma,ab,xy)\in \bG}, 
\end{equation}
where~$\bG$ is defined as in~\eqref{def:bG}. 
Our strategy will be to construct a `super nice' subset~$\cS \subseteq \cP^+ \cap \bG_{ab,xy}\cap \cT^c$ of non-twins that satisfy the `gap' hypothesis of~\refL{completeswitch}~\ref{completeswitch:2}. 
In comparison with the above proof of \refL{switch}, this eventually allows us to gain the desired extra $1+\eps$ factor by invoking the stronger conclusion of \refL{completeswitch}~\ref{completeswitch:2} for non-twins~$\sigma \in \cS$. 
\begin{proof}[Proof of \refL{goodswitch}]
Let~$\cS\subseteq \cP^+$ denote the set of edge-sequences~$\sigma \in \cP^+$ with the following two properties:
(a)~that ${\min\{\sigma(xy), t_x, t_y\}}-{\max\{\sigma(ab), t_a, t_b\}}\ge {\zeta m/3}$ holds, 
and (b)~that all edges incident to $X,Y$ appear after all edges incident to~$A,B$. 
By property~\ref{lem:twins:techn2} of twins (see \refL{twins}) it follows that~$S\subseteq \cT^c$. 
Recalling that the map $\sigma \mapsto \sigma'$ is a bijection between~$\cP^+$ and~$\cP^-$ (see Observation~\ref{fact:switch}), 
we shall now decompose~$Z(C^-) = \sum_{\sigma \in C^+}Z(\sigma')$ into several parts, 
and apply different inequalities to each part. 
Recalling Observation~\ref{fact:switch} and twins property~\ref{lem:twins:equiv}, 
by invoking parts~\ref{completeswitch:1} and~\ref{completeswitch:2} of \refL{completeswitch} separately for~$\sigma\in \cS^c$ and~$\sigma \in \cS$, 
together with \refL{twins} it follows that there is~$\epsilon'=\epsilon'(\zeta)>0$ such that 
\begin{equation}\label{eq:goodswitch:decomposition}
\begin{split}
Z(C^-) & = \frac{1}{2}\sum_{\sigma\in \cS} \bigsqpar{Z(\sigma')+Z(\osigma')}+\frac{1}{2}\sum_{\sigma\in \cP^+\cap \cT^c\cap \cS^c} \bigsqpar{Z(\sigma')+Z(\osigma')}+\sum_{\sigma\in \cP^+\cap \cT} Z(\sigma')\\
& \le \frac{1}{(1+\epsilon')}\frac{1}{2}\sum_{\sigma\in  \cS} \bigsqpar{Z(\sigma)+Z(\osigma)}+\frac{1}{2}\sum_{\sigma\in \cP^+\cap \cT^c\cap \cS^c} \bigsqpar{Z(\sigma)+Z(\osigma)}+\sum_{\sigma\in \cP^+\cap \cT} Z(\sigma) \\
& = Z(C^+)-\frac{\epsilon'}{(1+\epsilon')}\frac{1}{2}\sum_{\sigma\in  \cS} \bigsqpar{Z(\sigma)+Z(\osigma)} .	
\end{split}
\end{equation}
It thus suffices to prove that~$\sum_{\sigma\in \cS} Z(\sigma) \ge \varrho Z(C^+)$ for some $\varrho=\varrho(\zeta,\Delta)>0$. 
Indeed, for~${\eps:=\eps'/(1+\epsilon') \cdot \varrho/2}$ we then infer the desired result~$Z(C^+) \ge (1+\epsilon)Z(C^-)$ by noting that 
\begin{equation}\label{eq:goodswitch:goal}
Z(C^-) \le (1-\epsilon)  Z(C^+) \le Z(C^+)/(1+\epsilon).
\end{equation}

In the remaining proof of~$\sum_{\sigma\in \cS} Z(\sigma) \ge \varrho Z(C^+)$, we shall work with a fixed lexicographic ordering of all possible edges. 
The following mapping ${f:\cP^+\cap \mathbb{G}_{ab,xy} \to \cS}$ is at the heart of our argument. 
Given an edge-sequence $\sigma\in\cP^+\cap \mathbb{G}_{ab,xy}$, we write $e_1,\ldots,e_\ell$ for an enumeration of all edges incident to~$X,Y$ (including~$xy$) according to our lexicographic ordering. 
For~$i=1,\ldots,\ell$ we now sequentially proceed as follows: 
if the edge~$e_i$ is not in the last $\frac{2}{3}\zeta m-2\Delta$ steps of our current edge-sequence, then we repeatedly shift it back by exactly $\lfloor\frac{1}{3}\zeta m\rfloor$ steps until it is.
Writing~$f(\sigma)$ for the edge-sequence resulting from this shifting process, we~define 
\begin{equation}
\cS_f := \bigcpar{ f(\sigma): \: \sigma \in \cP^+\cap \mathbb{G}_{ab,xy} }. 
\end{equation}

We claim that~$\cS_f \subseteq \cS$. 
Fix~$\sigma^* \in \cS_f$, where by construction~$\sigma^* =f(\sigma)$ for some $\sigma \in \cP^+\cap \mathbb{G}_{ab,xy}$. 
Note that $\sigma\in \bG_{ab,xy}$ implies~$\sigma^*\in \bG_{ab,xy}$, 
and that the edges~$e_1,\ldots,e_\ell$ all appear in the last $\frac{2}{3}\zeta m$ steps of the new edge-sequence~$\sigma^*$ (since~$\ell \le 2\Delta$).  
Furthermore, since~$A,B$ are not adjacent to~$X,Y$ in~$\sigma \in \cP^+\cap \mathbb{G}_{ab,xy}$ by definition of good clusters, 
it follows that~$A,B$ are saturated in the first $(1-\zeta)m$ steps in~$\sigma$ and thus~$\sigma^*$ (since we do not shift edges incident to~$A,B$). 
Putting things together, we infer that~$\sigma^*$ satisfies both properties~(a) and~(b). 
Therefore~$\sigma^*\in \cS$, establishing the claim that~$\cS_f \subseteq \cS$.

We now bound the preimage size $\bigabs{f^{-1}(\sigma^*)}$ for~$\sigma^* \in \cS_f$. 
Note that $\sigma$ and $\sigma^*=f(\sigma)$ contain the same sets of edges, 
so from $\sigma^*$ we can uniquely recover the lexicographic enumeration of the edges~$e_1,e_2,\dots,e_\ell$ incident to~$X,Y$. 
The crux is that if we are given~$\sigma^*$ and the number of times each edge~$e_i$ is shifted backward (which is at most $\ceil{{m}/{\lfloor\frac{1}{3}\zeta m\rfloor}}$ by construction), 
then we can uniquely recover~$\sigma$. 
Since~$\ell\le 2\Delta$, it follows~that 
\begin{equation}\label{eq:goodswitch:preimage}
\bigabs{f^{-1}(\sigma^*)} \le \left(\biggceil{\frac{m}{\lfloor\frac{1}{3}\zeta m\rfloor}}\right)^{\ell} \le \lrpar{\frac{6}{\zeta}}^{2\Delta} .
\end{equation}

We next bound the ratio~$Z(\sigma^*)/Z(\sigma)$ for~$\sigma^*=f(\sigma)$. 
Since the shifted edges are shifted back by $\floor{\frac{1}{3}\zeta m}$ steps until they are in the last $\frac{2}{3} \zeta m -2\Delta$ steps, 
it follows that none of them are shifted back into the last $\frac{1}{3}m\zeta -2\Delta$ steps. As a consequence we have $\Gamma_i(\sigma) = \Gamma_i(\sigma^*)$ for $i>\ceil{(1-\zeta/3)m} +2\Delta$, 
since $\sigma,\sigma^*$ are identical in the last $\frac{1}{3}m\zeta -2\Delta$ steps.
Note that the mapping~$f$ shifts back edges whose endpoints belong to at most $2\Delta +2$ distinct vertices ($X,Y$ and their neighbours). These are the only vertices whose saturation can be delayed by~$f$, therefore $\Gamma_i(\sigma)\ge {\Gamma_i(\sigma^*)-(2\Delta+2)}$.
Recalling the definition~\eqref{def:Zsigma} of~$Z(\sigma)$ and that Observation~\ref{obs:unsaturated} gives $\Gamma_{i}(\sigma^*) \ge {2(m-i)/\Delta}$, 
by combining the aforementioned bounds with~$m=\Theta(n)$ and~$1 \le \Delta = O(1)$ as well as the estimate~${1-x \ge e^{-2x}}$ for~$x \in [0,1/2]$,
it follows~that, say, 
\begin{equation}\label{eq:goodswitch:massratio}
\begin{split}
\frac{Z(\sigma^*)}{Z(\sigma)} 
&= \prod_{0 \le i \le \ceil{(1-\zeta/3)m} +2\Delta} \frac{\Gamma_i(\sigma)(\Gamma_i(\sigma)-1)}{\Gamma_i(\sigma^*)(\Gamma_i(\sigma^*)-1)}\\
&\ge \prod_{0 \le i \le \ceil{(1-\zeta/3)m} +2\Delta} \left(1-\frac{2\Delta+2}{\Gamma_i(\sigma^*)}\right)\left(1-\frac{2\Delta+2}{\Gamma_i(\sigma^*)-1}\right)\\
& \ge \left(1-\frac{2\Delta+2}{\frac{\zeta m}{3\Delta}}\right)^m\left(1-\frac{2\Delta+2}{\frac{\zeta m}{3\Delta}}\right)^m\ge e^{-48\Delta^2/\zeta}.
\end{split}
\end{equation}

Finally, combining~$\cS_f \subseteq \cS$ with the definition of good clusters and~\eqref{eq:goodswitch:preimage}--\eqref{eq:goodswitch:massratio}, we infer that 
\begin{align*}
       \frac{\sum_{\sigma^*\in \cS} Z(\sigma^*)}{Z(C^+)}& \ge \frac{\sum_{\sigma^*\in \cS_f} Z(\sigma^*)}{\sum_{\sigma\in \cP^+\cap \mathbb{G}_{ab,xy}} Z(\sigma)} \cdot \frac{{\sum_{ \sigma\in \cP^+\cap \mathbb{G}_{ab,xy}} Z(\sigma)}}{Z(C^+)} \\
       &\ge \frac{\sum_{\sigma^*\in \cS_f} Z(\sigma^*)}{\sum_{\sigma^*\in \cS_f} \sum_{\sigma\in f^{-1}(\sigma^*)} Z(\sigma)} \cdot \frac{1}{16} 
       \ge \frac{e^{-48\Delta^2/\zeta}}{(6/\zeta)^{2\Delta}} \cdot \frac{1}{16} \cdot =: \varrho  ,
\end{align*}
which completes the proof of \refL{goodswitch}, as discussed. 
\end{proof}

\subsection{Switching of edge-sequences: proof of \refL{completeswitch}}\label{sec:completeswitch}
For an edge-sequence~$\sigma \in \Pi_G$ of~$G$, 
recall that the definition~\eqref{def:Zsigma} of the probability parameter~$Z(\sigma)$ is
\begin{equation*}
Z(\sigma)= \prod_{0 \le i \le m-1} \frac{2}{\Gamma_i(\sigma)(\Gamma_i(\sigma)-1)},
\end{equation*}
where $\Gamma_i=\Gamma_i(\sigma)$ denotes the number of unsaturated vertices after adding the first~$i$ edges of the edge-sequence~$\sigma$.
To prove the switching result \refL{completeswitch} (which is at the heart of our approach), we will analyze how the denominators~$\Gamma_i(\Gamma_i-1)$ differ between the edge-sequences~$\sigma, \osigma, \sigma', \osigma'$. 
A close inspection of these transformations reveals that $A, B, X, Y$ are the only vertices whose saturation steps might differ between~$\sigma, \osigma, \sigma', \osigma'$, 
so our analysis reduces to tracking when~$A, B, X, Y$ saturate in these~edge-sequences (recall that $A, B, X, Y$ denote the vertices that contain the points $a,b,x,y$). 
\begin{proof}[Proof  of \refL{completeswitch}]
Recall from \refO{fact:switch}~\ref{13:2} that the steps~$t_a, t_b, t_x, t_y$ defined around~\eqref{def:ta} are the same for all the four edge-sequences~$\sigma, \osigma, \sigma', \osigma'$.  
Note that, without loss of generality, we may henceforth assume that ${\sigma(ab) < \sigma(xy)}$ holds: 
indeed, part~\ref{completeswitch:2} is otherwise vacuous, and for part~\ref{completeswitch:1} we can otherwise exchange the role of $\sigma$ and $\ov{\sigma}$ (which swaps the order of $\sigma(ab)$ and $\sigma(xy)$, and also exchanges the roles of $\sigma'$ and $\ov{\sigma'}$ by Observation~\ref{fact:switch}). 
Our upcoming estimates formally use a case distinction depending on whether $t_a \le t_b$ or $t_b \le t_a$ holds. 
Both cases can be handled by the same argument (with routine notational changes), so we shall henceforth only consider the case where $t_a \le t_b$~holds. 

Recall that $\G_i(\s)$ is defined to be the number of unsaturated vertices {\em after} the addition of edge $i$. Thus, if a vertex $v$ becomes saturated with the addition of edge $uv$ then $v$ is {\em not} counted in $\G_i(\s)$ for $i=\s(uv)$.  For example, if $\sigma(ab)<t_a$ then $a$ is not counted in $\G_{t_a}(\s)$.

As mentioned above, a careful inspection reveals that~${A, B, X, Y}$ are the only vertices whose saturation steps might differ between~${\sigma, \osigma, \sigma', \osigma'}$. 
Based on the way the edge-replacements and order-swaps of the edge-sequences~$\sigma, \osigma, \sigma', \osigma'$ (see Definition~\ref{dcounterparts2} and above Observation~\ref{fact:switch}) alter only the order of the edges $\{ab,xy\}$ or $\{ax,by\}$ added in steps~$\sigma(ab)$ and~$\sigma(xy)$,  
it thus follows that for $i\not\in{[\sigma(ab),\sigma(xy)-1]}$ the number ${\Gamma_i(\pi)}$ of unsaturated vertices is the same for all edge-sequences~${\pi \in \{\sigma, \osigma, \sigma', \osigma'\}}$ (in fact, the same set of vertices are saturated in all four~sequences). 
With foresight, we now~introduce
\begin{equation}\label{def:ia}
i_a :=
\begin{cases}
\sigma(ab), & \text{if }  \sigma(ab)\ge t_a,\\
t_a, & \text{if } \sigma(ab)<t_a<\sigma(xy),\\
\sigma(xy), & \text{if } t_a\ge \sigma(xy),
\end{cases}
\end{equation}
and define~$i_b, i_x, i_y$ similarly. 
Intuitively, $i_a$ locates~$t_a$ with respect to the interval~$[\sigma(ab),\sigma(xy)]$. 
Note that the assumptions~${t_a \le t_x}$ and~${t_b \le t_y}$ of \refL{completeswitch} ensure together with our extra assumption~${t_a \le t_b}$ that ${t_a=\min\{t_a,t_b,t_x,t_y\}}$, 
which in turn implies ${i_a=\min\{i_a,i_b,i_x,i_y\}}$ by inspecting~\eqref{def:ia}. 
The following 
property of the number of unsaturated vertices in the four edge-sequences~${\sigma, \osigma, \sigma', \osigma'}$ is crucial for our argument. 
\begin{claim}\label{cl:unsaturated}
For each~${i\not\in [i_a, \max\{i_x,i_y\}-1]}$, the number ${\Gamma_i(\pi)}$ of unsaturated vertices is the same for all edge-sequences~${\pi \in \{\sigma, \osigma, \sigma', \osigma'\}}$.
\end{claim}
\begin{proof}
The claim about the number $\Gamma_i(\pi)$ of unsaturated vertices follows for~$i < \sigma(ab)$ and~$i \ge \sigma(xy)$ by the discussion above~\eqref{def:ia}, 
and follows for the remaining~$i$ by the case-by-case analysis below (keeping in mind that~${A, B, X, Y}$ are the only vertices whose saturation steps might differ).
%


(i)~If $\sigma(ab) \le i < i_a$, then by inspecting~\eqref{def:ia} it follows that $i < i_a \le t_a = \min\{t_a,t_b,t_x,t_y\}$, 
and so none of the vertices~${A, B, X, Y}$ are saturated in any of the four edge-sequences. 

(ii)~If~$\max\{i_x,i_y\} \le i < \sigma(xy)$, then exactly two of the vertices $A,B,X,Y$ are unsaturated in each of the four edge-sequences. 
To see this, note that in each sequence the two vertices chosen in step $\sigma(xy)$ are not saturated by the end of step~$i$.
Furthermore, using the assumptions $t_a \le t_x$ and $t_b \le t_y$ of \refL{completeswitch} we infer that $\max\{t_x,t_y\} \ge \max\{t_a,t_b\}$, 
which in turn implies~$\max\{i_x,i_y\} \ge \max\{i_a,i_b,\sigma(ab)\}$ by inspecting~\eqref{def:ia}. 
Hence in each sequence the two vertices chosen in step~$\sigma(ab)$ will be saturated by the end of step~$i$.
\end{proof}

From Claim~\ref{cl:unsaturated}, 
in view of the definition~\eqref{def:Zsigma} of~$Z(\sigma)$ we readily infer, defining
\begin{equation} \label{dZ'}
Z'(\pi) := \hspace{-0.125em}\prod_{i_a \le i < \max\{i_x,i_y\}}\hspace{-0.075em} \frac{2}{\Gamma_i(\pi)(\Gamma_i(\pi)-1)},
\end{equation}
that the following ratios are all equal to the same value:
\begin{equation}\label{def:Z'}
\frac{Z'(\sigma)}{Z(\sigma)}=\frac{Z'(\osigma)}{Z(\osigma)}=\frac{Z'(\sigma')}{Z(\sigma')}=\frac{Z'(\osigma')}{Z(\osigma')}.
\end{equation}
It follows that inequality~${Z(\sigma)+Z(\osigma)}\ge {Z(\sigma')+Z(\osigma')}$ from \refL{completeswitch} part~\ref{completeswitch:1} is equivalent to 
\begin{equation}\label{eq:completeswitch:equiv}
Z'(\sigma)+Z'(\osigma)\ge Z'(\sigma')+Z'(\osigma') ,
\end{equation}
and that the inequality from \refL{completeswitch} part~\ref{completeswitch:2} is equivalent to ${Z(\sigma)+Z(\osigma)}\ge {(1+\epsilon')[Z(\sigma')+Z(\osigma')]}$. 
Using several case distinctions, 
we shall now prove these equivalent inequalities. 

To avoid clutter in the upcoming estimates, for brevity
 we define $\Gamma^*_i$ to be $\Gamma_i(\s)$ plus the number of saturated vertices among $A, B, X, Y$ at the end of step~$i$ of $\s$.  Thus, $\Gamma^*_i$ is the size of the set of all unsaturated vertices at the end of step~$i$, unioned with $\{A,B,X,Y\}$, and this is true for step $i$ of any of the four edge-sequences $\sigma, \osigma, \sigma', \osigma'$.  This allows us to express $\Gamma_i$, for any of those sequences, as $\Gamma_i^*$ minus the number of $\{A,B,X,Y\}$ that are saturated at the end of step~$i$.

Recall that 
we have $t_a\leq t_b\leq t_x$ and $t_a\leq t_y$ (as discussed above).  
In the proof of the above-mentioned inequalities, we shall consider the following three possible cases for the location of $t_y$. 

\textit{\bf Case~$t_a\le t_b\le t_x\le t_y$:} 
Note that this implies $\sigma(ab) \le i_a\le i_b\le i_x\le i_y \le \sigma(xy)$ by~\eqref{def:ia}.
Recalling~\eqref{eq:completeswitch:equiv}, we thus only need to consider $i_a\leq i< \max\{i_x,i_y\}=i_y$ in~\eqref{dZ'}. 
By tracking when the vertices~$A, B, X, Y$ saturate, it follows~that 
\begin{equation*}
    \begin{split}
Z'(\sigma)&= 
\underbrace{\prod_{i_a\le i < i_b}\frac{2}{(\Gamma^*_i-1)(\Gamma^*_i-2)}}_{=:B_1}\cdot 
\underbrace{\prod_{i_b\le i < i_x}\frac{2}{(\Gamma^*_i-2)(\Gamma^*_i-3)}}_{=:C_1} \cdot 
\underbrace{\prod_{i_x\le i < i_y}\frac{2}{(\Gamma^*_i-2)(\Gamma^*_i-3)}}_{=:D_1} ,\\
Z'(\osigma)&=
\underbrace{\prod_{i_a\le i < i_b}\frac{2}{\Gamma^*_i(\Gamma^*_i-1)}}_{=:B_2}\cdot 
\underbrace{\prod_{i_b\le i < i_x}\frac{2}{\Gamma^*_i(\Gamma^*_i-1)}}_{=:C_2}\cdot 
\underbrace{\prod_{i_x\le i < i_y}\frac{2}{(\Gamma^*_i-1)(\Gamma^*_i-2)}}_{=:D_2},
    \end{split}
\end{equation*}
as well as 
\begin{equation*}
    \begin{split}
Z'(\sigma')&=
\underbrace{\prod_{i_a\le i < i_b}\frac{2}{(\Gamma^*_i-1)(\Gamma^*_i-2)}}_{= B_1}\cdot 
\underbrace{\prod_{i_b\le i < i_x}\frac{2}{(\Gamma^*_i-1)(\Gamma^*_i-2)}}_{=:C_3}\cdot 
\underbrace{\prod_{i_x\le i < i_y}\frac{2}{(\Gamma^*_i-2)(\Gamma^*_i-3)}}_{=D_1},\\
Z'(\osigma')&=
\underbrace{\prod_{i_a\le i < i_b}\frac{2}{\Gamma^*_i(\Gamma^*_i-1)}}_{=B_2}\cdot
\underbrace{\prod_{i_b\le i < i_x}\frac{2}{(\Gamma^*_i-1)(\Gamma^*_i-2)}}_{=C_3} \cdot 
\underbrace{\prod_{i_x\le i < i_y}\frac{2}{(\Gamma^*_i-1)(\Gamma^*_i-2)}}_{=D_2} .
    \end{split}
\end{equation*}
Note that $B_1\ge B_2$, $C_3\ge C_2$, $D_1\ge D_2$ and $C_1C_2\ge C_3^2$. 
Using elementary estimates\footnote{In equation~\eqref{eq:case1:bound} we exploit that for all~$\alpha,\beta\ge 1$ have 
$\alpha^2(\beta-1) \ge \beta -1$, which implies~$2(\alpha^2\beta+1) \ge (\alpha^2\beta +1)+(\alpha^2+\beta)=(\alpha^2+1)(\beta+1)$ and thus $(\alpha^2\beta+1)/[\alpha(\beta+1)] \ge (\alpha^2+1)/(2\alpha)$.\label{fn:elementary2}}
we then obtain that 
\begin{equation}\label{eq:case1:bound}
\frac{Z'(\sigma)+Z'(\osigma)}{Z'(\sigma')+Z'(\osigma')}
= \frac{\frac{B_1C_1D_1}{B_2C_2D_2} + 1}{\frac{B_1C_3D_1}{B_2C_2D_2}+\frac{C_3}{C_2}}
\ge \frac{\frac{C_3^2}{C_2^2}\frac{B_1}{B_2}\frac{D_1}{D_2}+1}{\frac{C_3}{C_2}(\frac{B_1}{B_2}\frac{D_1}{D_2}+1)}
\ge\frac{(\frac{C_3}{C_2})^2+1}{2\frac{C_3}{C_2}}\ge 1 ,
\end{equation}
establishing inequality~\eqref{eq:completeswitch:equiv}. 

For part~\ref{completeswitch:2} of Lemma~\ref{completeswitch}, 
note that for $i_b\le i < i_x$ we used ${\Gamma^*_i-2=\Gamma_i(\sigma)}$ above (cf.~$C_1$ above). 
Furthermore, we have ${\Gamma_i(\sigma)} \le {2(m-i) \le 2m}$ by Observation~\ref{obs:unsaturated}. 
Combined with the fact that ${i_x-i_b}\ge {{\min\{\sigma(xy), t_x\}}-{\max\{\sigma(ab), t_b\}}}$ satisfies ${i_x-i_b} \ge {\zeta m/3}$ by the `gap' hypothesis of part~\ref{completeswitch:2}, 
it follows~that 
\begin{equation}\label{eq:case1:bound:better}
\frac{C_3}{C_2}=\prod_{i_b\le i < i_x}\left(1+\frac{2}{\Gamma^*_i-2}\right)\ge \lrpar{1+\frac{1}{m}}^{i_x-i_b} \ge 1+\frac{i_x-i_b}{m} \ge 1+\zeta/3,
\end{equation}
which enables us to improve the final inequality of~\eqref{eq:case1:bound} to $\ge 1+\epsilon'$ for suitable~$\epsilon'=\epsilon'(\zeta)>0$.

\textit{\bf Case $t_a\le t_b\le t_y\le t_x$:} 
Note that this implies $\sigma(ab) \le i_a \le i_b \le i_y \le i_x \le \sigma(xy)$ by~\eqref{def:ia}, 
so this time we only need to consider $i_a\leq i< \max\{i_x,i_y\}=i_x$ in~\eqref{dZ'}. 
Proceeding analogously to the previous case, by tracking when the vertices~$A, B, X, Y$ saturate it follows~that 
\begin{equation*}
    \begin{split}
Z'(\sigma)&= 
\underbrace{\prod_{i_a\le i < i_b}\frac{2}{(\Gamma^*_i-1)(\Gamma^*_i-2)}}_{=:B_1}\cdot 
\underbrace{\prod_{i_b\le i < i_y}\frac{2}{(\Gamma^*_i-2)(\Gamma^*_i-3)}}_{=:C_1} \cdot 
\underbrace{\prod_{i_y\le i < i_x}\frac{2}{(\Gamma^*_i-2)(\Gamma^*_i-3)}}_{=:D_1} , \\
Z'(\osigma)&=
\underbrace{\prod_{i_a\le i < i_b}\frac{2}{\Gamma^*_i(\Gamma^*_i-1)}}_{=:B_2} \cdot 
\underbrace{\prod_{i_b\le i < i_y}\frac{2}{\Gamma^*_i(\Gamma^*_i-1)}}_{=:C_2} \cdot 
\underbrace{\prod_{i_y\le i < i_x}\frac{2}{(\Gamma^*_i-1)(\Gamma^*_i-2)}}_{=:D_2}, 
    \end{split}
\end{equation*}
as well as 
\begin{equation*}
    \begin{split}
Z'(\sigma')&=
\underbrace{\prod_{i_a\le i < i_b}\frac{2}{(\Gamma^*_i-1)(\Gamma^*_i-2)}}_{=B_1}\cdot 
\underbrace{\prod_{i_b\le i < i_y}\frac{2}{(\Gamma^*_i-1)(\Gamma^*_i-2)}}_{=:C_3}\cdot 
\underbrace{\prod_{i_y\le i < i_x}\frac{2}{(\Gamma^*_i-1)(\Gamma^*_i-2)}}_{=D_2}, \\
Z'(\osigma')&=
\underbrace{\prod_{i_a\le i < i_b}\frac{2}{\Gamma^*_i(\Gamma^*_i-1)}}_{=B_2}\cdot  
\underbrace{\prod_{i_b\le i < i_y}\frac{2}{(\Gamma^*_i-1)(\Gamma^*_i-2)}}_{=C_3}\cdot 
\underbrace{\prod_{i_y\le i < i_x}\frac{2}{(\Gamma^*_i-2)(\Gamma^*_i-3)}}_{=D_1} .
    \end{split}
\end{equation*}
Note that $B_1\ge B_2$, $C_1C_2\ge C_3^2$, $C_3\ge C_2$, and $D_1\ge D_2$. 
Similarly to~\eqref{eq:case1:bound}, 
using elementary estimates\footnote{Similarly to Footnote~\ref{fn:elementary2}, in equation~\eqref{eq:case2:bound} we exploit that for all~$\alpha,\beta,\gamma\ge 1$ we have $\alpha^2\beta(\gamma-1) \ge \gamma-1$ and $\alpha^2\gamma(\beta-1) \ge \beta-1$, 
which implies $2(\alpha^2\beta\gamma+1) \ge (\alpha^2\beta + \gamma) + (\alpha^2\gamma+\beta) = (\alpha^2+1)(\beta+\gamma)$ and thus $(\alpha^2\beta\gamma+1)/[\alpha(\beta+\gamma)] \ge (\alpha^2+1)/(2\alpha)$.} 
we then obtain that
\begin{equation}\label{eq:case2:bound}
\frac{Z'(\sigma)+Z'(\osigma)}{Z'(\sigma')+Z'(\osigma')}
\ge \frac{\frac{C_3^2}{C_2^2}\frac{B_1}{B_2}\frac{D_1}{D_2}+1}{\frac{C_3}{C_2}(\frac{B_1}{B_2}+\frac{D_1}{D_2})}\ge \frac{(\frac{C_3}{C_2})^2+1}{2\frac{C_3}{C_2}}\ge 1,
\end{equation}
establishing~\eqref{eq:completeswitch:equiv}. 
For part~\ref{completeswitch:2}, using~$i_y-i_b\ge \zeta m/3$ we obtain analogously to~\eqref{eq:case1:bound:better} that 
\begin{equation*}
\frac{C_3}{C_2}=\prod_{i_b\le i < i_x}\left(1+\frac{2}{\Gamma^*_i-2}\right) \ge \lrpar{1+\frac{1}{m}}^{i_x-i_b} \ge 1+\frac{i_x-i_b}{m} \ge 1+\zeta/3,
\end{equation*}
which again enables us to improve the final inequality of~\eqref{eq:case2:bound} to~$\ge 1+\epsilon'$. 

\textit{\bf Case $t_a\le t_y\le t_b\le t_x$:} 
Note that this implies $\sigma(ab) \le i_a \le i_y \le i_b \le i_x \le \sigma(xy)$ by~\eqref{def:ia}, 
so we again only need to consider $i_a\leq i< \max\{i_x,i_y\}=i_x$ in~\eqref{dZ'}. 
Proceeding analogously to the previous cases, it follows~that 
\begin{equation*}
    \begin{split}
Z'(\sigma)&=
\underbrace{\prod_{i_a\le i < i_y}\frac{2}{(\Gamma^*_i-1)(\Gamma^*_i-2)}}_{=:B_1}\cdot 
\underbrace{\prod_{i_y\le i < i_b}\frac{2}{(\Gamma^*_i-1)(\Gamma^*_i-2)}}_{=:C_1} \cdot 
\underbrace{\prod_{i_b\le i < i_x}\frac{2}{(\Gamma^*_i-2)(\Gamma^*_i-3)}}_{=:D_1} ,\\
Z'(\osigma)&=
\underbrace{\prod_{i_a\le i < i_y}\frac{2}{\Gamma^*_i(\Gamma^*_i-1)}}_{=:B_2}\cdot 
\underbrace{\prod_{i_y\le i < i_b}\frac{2}{(\Gamma^*_i-1)(\Gamma^*_i-2)}}_{=C_1} \cdot 
\underbrace{\prod_{i_b\le i < i_x}\frac{2}{(\Gamma^*_i-1)(\Gamma^*_i-2)}}_{=:D_2} ,
    \end{split}
\end{equation*}
as well as 
\begin{equation*}
    \begin{split}
Z'(\sigma')&=
\underbrace{\prod_{i_a\le i < i_y}\frac{2}{(\Gamma^*_i-1)(\Gamma^*_i-2)}}_{=B_1}\cdot 
\underbrace{\prod_{i_y\le i < i_b}\frac{2}{(\Gamma^*_i-1)(\Gamma^*_i-2)}}_{=C_1} \cdot 
\underbrace{\prod_{i_b\le i < i_x}\frac{2}{(\Gamma^*_i-1)(\Gamma^*_i-2)}}_{=D_2}  \\
Z'(\osigma')&=
\underbrace{\prod_{i_a\le i < i_y}\frac{2}{\Gamma^*_i(\Gamma^*_i-1)}}_{=B_2}\cdot  
\underbrace{\prod_{i_y\le i < i_b}\frac{2}{(\Gamma^*_i-1)(\Gamma^*_i-2)}}_{=C_1} \cdot 
\underbrace{\prod_{i_b\le i < i_x}\frac{2}{(\Gamma^*_i-2)(\Gamma^*_i-3)}}_{=D_1} .
    \end{split}
\end{equation*}
Noting that $B_1\ge B_2$ and $D_1\ge D_2$ imply $(B_1-B_2)(D_1-D_2)\ge 0$, 
it follows that 
\begin{equation*}
\frac{Z'(\sigma)+Z'(\osigma)}{Z'(\sigma')+Z'(\osigma')}
=\frac{B_1D_1+B_2D_2}{ B_1D_2+B_2D_1} \ge 1.
\end{equation*}
For part~\ref{completeswitch:2} there is nothing to show in this case, 
since~$t_y\le t_b$ implies ${\min\{\sigma(xy), t_x, t_y\}}-{\max\{\sigma(ab), t_a, t_b\}}\le 0<{\zeta m/3}$, contradicting the `gap' hypothesis of part~\ref{completeswitch:2}. 
%
%
\end{proof}

\subsection{Pairing of edge-sequences: twins and proof of \refL{twins}}\label{sec:twins}
Expanding on the beginning of \refS{sec:switching}, the majority of this subsection is devoted to the rigorous construction of twins, 
i.e., pairs of edge-sequences ${(\s_1,\s_2) \in (\cP^+ \times \cP^-)\cup (\cP^-\times \cP^+)}$ that 
satisfy~$Z(\s_1)=Z(\s_2)$ and a number of additional technical properties 
that are useful for the proof of \refL{twins}.

Before giving the formal details of our construction, we start with a simple yet illustrative example for the case $\s_1\in \cP^+$ with $t_x(\s_1)<t_b(\s_1)$. 
Suppose that~$B$ and~$X$ are not adjacent, with~$\deg(B)=4$ and~$\deg(X)=6$, 
and that the ten edges incident to $B$ and $X$ appear in the edge-sequence~$\s_1$ in the following~manner:
\[
\mydots, qx_1, \mydots, rb_1, \mydots, ab, \mydots, sx_2, \mydots, tx_3, \mydots, ux_4, \mydots, vx_5, \mydots, xy, \mydots, wb_2, \mydots, zb_3, \mydots 
\]
We then replace the edges $ab, xy$ with $ax, by$ (which can be thought of as swapping the points~$b$~and~$x$), 
and further swap some of the points in~$B$ with some of the points in~$X$ (say, swapping~$b_2$~and~$x_3$, and swapping~$b_3$~and~$x_5$) 
to~obtain the edge-sequence~$\s_2 \in \cP^-$ (clusters are defined to accommodate these operations; in particular, swapping of~$b$ and~$x$ ensures~$\s_2\in \cP^-$):
\[
\mydots, qx_1, \mydots, rb_1, \mydots, ax, \mydots, sx_2, \mydots, tb_2, \mydots, ux_3, \mydots, vb_3, \mydots, by, \mydots, wx_4, \mydots, zx_5, \mydots
\]
There are two important properties to observe in this example:\vspace{-2mm}
\begin{romenumerate}
	\parskip 0em  \partopsep=0pt \parsep 0em \itemsep 0.0675em
	\item Only points of~$B$ and~$X$ move to different positions, 
meaning that any vertex $V\notin \{B,X\}$ becomes saturated during the same step of~$\s_2$ as it does in~$\s_1$.
	\item Inspecting the definition~\eqref{def:se}--\eqref{def:ta}, 
we have~$t_x(\s_2)=t_b(\s_1)$ and $t_b(\s_2)=t_x(\s_1)$, as well as~$\s_2(ax)=\s_1(ab)$ and~$\s_2(by)=\s_1(xy)$. 
This implies that the steps at which~$B$ and~$X$ become saturated are swapped between~$\sigma_1$ and~$\sigma_2$.
\end{romenumerate}	
From these properties, it follows that the number of unsaturated vertices satisfies $\G_i(\sigma_1)=\G_i(\sigma_2)$ for all steps~$1\le i\le m$, which in turn leads to~$Z(\s_1)=Z(\s_2)$. 
In this case, we will call~$\s_2$ the \emph{$BX$-twin} of~$\s_1$, and vice versa; see \refS{sec:bxtwins}.
In general, we will carefully swap some points in~$B$ and~$X$ but not in a $BX$ edge such that the two properties stated above holds 
(to clarify: we will not swap the endpoints of any edges between $B$ and $X$, since that could create loops).

With that example in mind, we now turn to the formal details of our construction, 
where the following auxiliary observation will be used to swap points.
\begin{observation}\label{oswap}
Consider any integers satisfying $0<d_b<d_x\le t<d_x+d_b$. 
Let~$\Theta_1$ denote the set of sequences consisting of $d_x$ many X's and $d_b$ many B's such that:
(i)~the last character is B, (ii)~the  $t^{\rm th}$ character is X, and (iii)~every character after the $t^{\rm th}$ is~B. 
Let~$\Theta_2$ denote the set of sequences consisting of $d_x$ many X's and $d_b$ many B's such that:
(i)~the last character is X, (ii)~the $t^{\rm th}$ character is B, and (iii)~every character after the $t^{\rm th}$ is~X.
Then~$|\Theta_1|\leq |\Theta_2|$.
\end{observation}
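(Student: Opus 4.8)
The statement to prove is Observation~\ref{oswap}: for integers $0<d_b<d_x\le t<d_x+d_b$, with $\Theta_1$ the set of X/B-sequences of length $d_x+d_b$ having $d_x$ X's and $d_b$ B's, ending in B, with an X in position $t$, and only B's strictly after position $t$; and $\Theta_2$ the analogous set with the roles of the \emph{final character} and \emph{position-$t$ character} swapped (ending in X, a B in position $t$, only X's after $t$); one has $|\Theta_1|\le|\Theta_2|$.

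The plan is to reduce each set to a simple binomial count and then compare. First I would observe that in a sequence from $\Theta_1$, the suffix from position $t$ onward is completely forced: position $t$ is X, position $t+1,\dots,d_x+d_b$ are all B's (and position $d_x+d_b$ being B is automatically consistent, which is why that condition is redundant given $t<d_x+d_b$). So a sequence in $\Theta_1$ is determined by its prefix of length $t-1$, which must contain exactly $d_x-1$ X's (one X is used at position $t$) and exactly $d_b-(d_x+d_b-t)=t-d_x$ B's; note $t-d_x\ge 0$ and $(d_x-1)+(t-d_x)=t-1$, so this is consistent. Hence $|\Theta_1|=\binom{t-1}{d_x-1}$. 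Symmetrically, a sequence in $\Theta_2$ has its suffix from position $t$ forced to be B followed by X's, so it is determined by a length-$(t-1)$ prefix containing exactly $d_x-(d_x+d_b-t)=t-d_b$ X's and exactly $d_b-1$ B's, giving $|\Theta_2|=\binom{t-1}{d_b-1}$. One should double-check the edge cases: if $t=d_x+d_b$ the "suffix after $t$" is empty and the counts still read $\binom{t-1}{d_x-1}$ and $\binom{t-1}{d_b-1}$; if $t=d_x$ then $\Theta_1$'s prefix has $0$ B's, again fine.

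It then remains to show $\binom{t-1}{d_x-1}\le\binom{t-1}{d_b-1}$. Since $d_b<d_x$ we have $d_b-1<d_x-1$, and both indices lie in $\{0,1,\dots,t-1\}$ (using $d_x\le t$, so $d_x-1\le t-1$, and $d_b-1\ge 0$). The binomial coefficient $\binom{t-1}{j}$ is unimodal in $j$, increasing for $j\le (t-1)/2$ and decreasing afterward, and moreover $\binom{t-1}{j}$ as a function of $j\in\{0,\dots,t-1\}$ is symmetric about $(t-1)/2$. The cleanest argument: if $d_x-1\le (t-1)/2$ then both indices are on the increasing branch and $d_b-1<d_x-1$ gives the claim directly; otherwise $d_x-1>(t-1)/2$, and I compare $\binom{t-1}{d_x-1}=\binom{t-1}{t-d_x}$ with $\binom{t-1}{d_b-1}$—here $t-d_x\ge 0$ and, using the hypothesis $t<d_x+d_b$, i.e. $t-d_x<d_b$, i.e. $t-d_x\le d_b-1$, so $t-d_x\le d_b-1<d_x-1$, meaning $t-d_x\le d_b-1$ and also $d_b-1\le(t-1)/2$ would finish it; in general it suffices to note $\min\{d_x-1,t-d_x\}\le\min\{d_b-1,t-d_b\}$? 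Actually the simplest fully rigorous route is: $\binom{t-1}{d_b-1}\big/\binom{t-1}{d_x-1}=\prod_{j=d_b}^{d_x-1}\frac{t-j}{j}$ (a product over $j=d_b,\dots,d_x-1$, which is nonempty since $d_b<d_x$), and each factor $\frac{t-j}{j}\ge 1$ because $j\le d_x-1<d_x+d_b-1$ and $t\ge d_x > j$ forces $t-j\ge 1$... wait, I need $t-j\ge j$, i.e. $j\le t/2$; this is where the hypothesis $t<d_x+d_b$ enters. Indeed for $j$ in that range, $t-j > (d_x+d_b)-j - (d_x+d_b-t)$... I would instead argue: for $d_b\le j\le d_x-1$ we have $t-j\ge t-(d_x-1)\ge 1$ and $j\le d_x-1$; the hypothesis $t\le d_x+d_b-1$ combined with $j\ge d_b$ gives $t-j\le d_x-1$, hmm that's an upper bound not what I want. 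The right inequality per factor is $t-j\ge j$; using $j\ge d_b$ and $t<d_x+d_b\Rightarrow$ nothing immediate. Let me instead just invoke the direct comparison of the two forced-prefix descriptions: a prefix valid for $\Theta_1$ has $d_x-1$ X's among $t-1$ slots; mapping such a prefix by swapping X$\leftrightarrow$B gives a length-$(t-1)$ string with $t-1-(d_x-1)=t-d_x$ X's, and $t-d_x\le d_b-1$ by the hypothesis $t\le d_x+d_b-1$, so this has \emph{at most} $d_b-1$ X's; padding/adjusting shows the prefixes for $\Theta_1$ inject into those for $\Theta_2$. The cleanest injection is actually at the level of the full sequences: given $w\in\Theta_1$, reverse it and swap X$\leftrightarrow$B. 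I would check that this lands in $\Theta_2$: reversal sends "ends in B" to "starts with B" — not quite the target shape, so reversal about the center is not it either.

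I'll therefore settle on the binomial-ratio computation as the main line, and the one potentially fiddly point — the \textbf{main obstacle} — is verifying $\binom{t-1}{d_x-1}\le\binom{t-1}{d_b-1}$ from $d_b<d_x\le t<d_x+d_b$; the hypothesis $t<d_x+d_b$ is exactly what guarantees $d_x-1$ is "farther from the center $(t-1)/2$ than $d_b-1$ is, on the right side," because $t-d_x\le d_b-1$ shows the reflection $t-d_x$ of $d_x-1$ satisfies $t-d_x\le d_b-1<d_x-1$, hence $d_b-1$ lies between $t-d_x$ and $d_x-1$, so by unimodality-plus-symmetry $\binom{t-1}{d_b-1}\ge\min\{\binom{t-1}{t-d_x},\binom{t-1}{d_x-1}\}=\binom{t-1}{d_x-1}$. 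I would write this last comparison out carefully using that $\binom{t-1}{\cdot}$ is symmetric about $(t-1)/2$ and unimodal, which immediately yields the claim. Everything else is bookkeeping with the forced suffixes and a routine check of the degenerate cases $t=d_x$ and $t=d_x+d_b$.
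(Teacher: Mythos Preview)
Your approach is correct and essentially the same as the paper's: both reduce to the counts $|\Theta_1|=\binom{t-1}{d_x-1}$ and $|\Theta_2|=\binom{t-1}{d_b-1}$ via the forced suffix, then compare. The paper dispatches the binomial inequality in one line (``since $t\le d_x+d_b-1$''), which is exactly your symmetry/unimodality observation that $t-d_x\le d_b-1<d_x-1$ places $d_b-1$ between $d_x-1$ and its reflection $t-d_x$; your final argument for this step is correct, though the detours through ratio products and explicit injections are unnecessary.
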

\begin{proof} 
The main idea is that restriction~(iii) is stricter for~$\Theta_1$ than for~$\Theta_2$ because of~$d_x>d_b$. 
More formally, by considering the first $t-1$ character in the sequence, it follows that $|\Theta_1|= \tbinom{t-1}{d_x-1}\le \tbinom{t-1}{d_b-1}=|\Theta_2|$, where the inequality holds since~$t\le d_x+d_b-1$.
\end{proof}
Observation~\ref{oswap} implies that there is an injection $I_{t,d_b,d_x}$ mapping $\Theta_1$ into $\Theta_2$. 
There are many possible injections, but we henceforth fix one arbitrarily (and use it in what follows).

\subsubsection{$BX$-twins}\label{sec:bxtwins}
Consider any edge-sequence $\s_1\in\cP^+$ with $t_x(\s_1)<t_b(\s_1)$. 
We will construct its \emph{$BX$-twin} $\s_2\in\cP^-$ by generalizing the example above, 
where we swapped $b$ and $x$, but did not swap points of~$B$ and~$X$ in a $BX$ edge.
To this end we define $\lambda_{BX}$ as the number of edges between~$B$ and~$X$ (which we will call $BX$ edge), 
and introduce the parameters 
\begin{equation}\label{def:degp}
d_b:=\deg(B)-1-\lambda_{BX} \qquad \text{ and } \qquad d_x:=\deg(X)-1-\lambda_{BX}, 
\end{equation}
which corresponds to the number of $B$ and $X$ points that we might swap. 
Using $\deg(B)<\deg(X)$, we infer that~$d_b<d_x$. 
With a view towards applying \refO{oswap}, we consider the $d_b+d_x$ edges in~$\s_1$ 
incident to~$B$ or~$X$ but is not ~$ab$ or~$xy$ and not a~$BX$-edge.
%
By considering these edges in the order in which they appear in $\s_1$, 
we naturally obtain a sequence containing $d_b$ many $B$'s and $d_x$ many X's, where the $i^{\rm th}$ entry is $B$ (resp.~$X$) if the $i^{\rm th}$ edge is has $B$ (resp.~$X$) as an endpoint. 
The resulting sequence~$S_1$ is well-defined,  since none of these edges has both~$B$ and~$X$ as endpoints. 
Since $t_x(\s_1)<t_b(\s_1)$, it follows that the last entry of $S_1$ is a $B$.  
Let $t$ be the position in $S_1$ of the last $X$.  Thus $t\geq d_x$.
So, using the notation from~\refO{oswap}, we have $S_1\in\Theta_1$ and 
we then set~$S_2:=I_{t,d_b,d_x}(S_1)$ using the injection we fixed above. 
(As an illustration, in the example above we have~$d_b=3$ and~$d_x=5$ as well as~$S_1=X,B,X,X,X,X,B,B$ and $S_2=X,B,X,B,X,B,X,X$.)
With~$S_1$ and~$S_2$ in hand, we are now ready to construct~$\s_2$ by modifying~$\s_1$ via the following steps:
\vspace{-0.5em}
\begin{romenumerate}	
\parskip 0em  \partopsep=0pt \parsep 0em \itemsep 0.0675em
\item\label{step1} In $\s_1$, replace~$ab$ with~$ax$ and~$xy$ with~$by$.
\item\label{step2} We temporarily ignore the label of points in $X$. For every edge $e$ counted by~$d_x$, i.e., edges incident to~$X$ that is not $xy$ and not a $BX$ edge, 
let $i_e$ denote the position in $S_1$ corresponding to $e$; thus~$S_1$ has an $X$ in position~$i_e$.  
If~$S_2$ has a $B$ in position~$i_e$, then we modify $e$ by changing endpoint $X$ to $B$; 
keep this edge in the same position in $\s_2$ as $e$ is in~$\s_1$. 
(If~$S_2$ has an $X$ in position~$i_e$, then we do nothing.)
\item\label{step3} We temporarily ignore the label of points in $B$. For every edge $e$ counted by~$d_b$, i.e., edges incident to~$B$ that is not $ab$ and not a $BX$ edge,  
let $i_e$ denote the position in $S_1$ corresponding to $e$; thus~$S_1$ has a $B$ in position~$i_e$.  
If~$S_2$ has a $X$ in position~$i_e$, then we modify $e$ by changing endpoint $B$ to $X$;
keep this edge in the same position in $\s_2$ as $e$ is in~$\s_1$. 
(If~$S_2$ has an $B$ in position~$i_e$, then we do nothing.)
\item\label{step4} Label the points of $B$ not equal to $b$ in $\s_2$ with the same relative ordering as (labeled) points of $B$ not equal to $b$ in $\s_1$.
Similarly, label the points of $X$ not equal to $x$ in $\s_2$ with the same relative ordering as (labeled) points of $X$ not equal to $x$ in $\s_1$.
\end{romenumerate}
The resulting edge-sequence~$\s_2$ is called a $BX$-twin of~$\s_1$, and from the definition of clusters it follows that~${\s_2\in\cP^-}$ 
(because we did not change the set of edges not incident to~$B$ and $X$, and we did not change the union of neighboring points of~$B$ and~$X$). 
The following lemma records some additional~properties. 
\begin{lemma}\label{ltbtx}
The $BX$-twin $\s_2\in\cP^-$ constructed above satisfies the following properties:\vspace{-0.5em}%
\begin{enumerate}[(a)]
	\parskip 0em  \partopsep=0pt \parsep 0em \itemsep 0.0675em
\item\label{16:1}  $Z(\s_1)=Z(\s_2)$.
\item\label{16:2} $\s_2\in\cP^-$ and $t_b(\s_2)<t_x(\s_2)$.
\item\label{16:3} $t_a(\s_2)=t_a(\s_1)$, $t_y(\s_2)=t_y(\s_1)$, $t_b(\s_2)=t_x(\s_1)$, $t_x(\s_2)=t_b(\s_1)$.
\item\label{16:4} Every $\s\in\cP^-$ with $t_b(\s)<t_x(\s)$ has at most one $BX$-twin.
\end{enumerate}
\end{lemma}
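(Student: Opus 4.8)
The plan is to establish all four parts through a single bookkeeping device: for every vertex $V$ I will compare the set of steps at which the points of $V$ are used in $\s_1$ and in $\s_2$, and in particular the saturation step $\mathrm{sat}_\s(V):=\max_{v\in V}\s(v)$. Recall that the construction keeps the position of every edge fixed (in particular all $\lambda_{BX}$ $BX$-edges), replaces $ab$ by $ax$ in place (so the point $x$ now sits at step $\s_1(ab)$) and $xy$ by $by$ in place (so the point $b$ now sits at step $\s_1(xy)$), and only relabels some $B$/$X$-endpoints of the $d_b+d_x$ edges incident to $B$ or $X$ that are neither $ab,xy$ nor $BX$-edges. The first, easy, observation is that for every vertex $V\notin\{B,X\}$ each point of $V$ is used at exactly the same step in $\s_1$ and $\s_2$: for a point other than $a$ or $y$ this is because the edge containing it keeps both its position and its endpoint outside $\{B,X\}$, and for $a$ and $y$ it is because $ab,xy$ are replaced in place by $ax,by$. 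In particular $\mathrm{sat}_{\s_1}(V)=\mathrm{sat}_{\s_2}(V)$ for all $V\notin\{B,X\}$, and inspecting the non-$a$ points of $A$ and the non-$y$ points of $Y$ yields $t_a(\s_2)=t_a(\s_1)$ and $t_y(\s_2)=t_y(\s_1)$.

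The substantive point is that the roles of $B$ and $X$ get swapped. Write $p_1<\dots<p_{d_b+d_x}$ for the (common) steps of the $d_b+d_x$ edges above and $q_1<\dots<q_{\lambda_{BX}}$ for the steps of the $BX$-edges. In $\s_1$ the non-$b$ points of $B$ occupy the steps $\{q_j\}\cup\{p_j:S_1[j]=B\}$ and in $\s_2$ they occupy $\{q_j\}\cup\{p_j:S_2[j]=B\}$, and symmetrically for the non-$x$ points of $X$ with $B,X$ interchanged. Now $t_x(\s_1)<t_b(\s_1)$ forces the last letter of $S_1$ to be $B$ (so $S_1\in\Theta_1$), whence $\max\{p_j:S_1[j]=B\}=p_{d_b+d_x}$ while $\max\{p_j:S_1[j]=X\}=p_t$ with $t$ the index of the last $X$ in $S_1$; and since $S_2=I_{t,d_b,d_x}(S_1)\in\Theta_2$, its last letter is $X$ and its last $B$ is in position $t$, so $\max\{p_j:S_2[j]=X\}=p_{d_b+d_x}$ and $\max\{p_j:S_2[j]=B\}=p_t$. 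Hence $t_b(\s_2)=\max(\{q_j\},p_t)=t_x(\s_1)$ and $t_x(\s_2)=\max(\{q_j\},p_{d_b+d_x})=t_b(\s_1)$, which with the previous paragraph is part~(c); the inequality $t_b(\s_2)<t_x(\s_2)$ of part~(b) is then immediate, and $\s_2\in\cP^-$ holds by the remark just before the lemma (the construction alters no edge avoiding $\{A,B,X,Y\}$, preserves the neighbour set of $\{A,B,X,Y\}$, and moves no endpoint of a $BX$-edge, hence creates no loop).

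Part~(a) now follows quickly. Since in $\s_2$ the point $b$ lies in the edge $by$ at step $\s_1(xy)$ and the point $x$ lies in the edge $ax$ at step $\s_1(ab)$, part~(c) upgrades to $\mathrm{sat}_{\s_2}(B)=\max(\s_1(xy),t_x(\s_1))=\mathrm{sat}_{\s_1}(X)$ and likewise $\mathrm{sat}_{\s_2}(X)=\mathrm{sat}_{\s_1}(B)$. Combined with the first paragraph this shows that the multiset $\{\mathrm{sat}_\s(V):V\in[n]\}$ is the same for $\s=\s_1$ and $\s=\s_2$. As $\Gamma_i(\s)=|\{V:\mathrm{sat}_\s(V)>i\}|$ depends only on that multiset, we get $\Gamma_i(\s_1)=\Gamma_i(\s_2)$ for every $i$, and therefore $Z(\s_1)=Z(\s_2)$ by the product formula~\eqref{def:Zsigma}.

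For part~(d) I will invert the construction. Suppose $\s\in\cP^-$ with $t_b(\s)<t_x(\s)$ is a $BX$-twin, say of $\s_1\in\cP^+$. From $\s$ one reads off the $\lambda_{BX}$ $BX$-edges and the edges $ax,by$ (identified by their endpoint pairs), hence the ordered list of the remaining edges incident to $B$ or $X$, hence the word $S_2\in\Theta_2$ they spell; then $t$ is forced as the position of the last $B$ of $S_2$, and since the injection $I_{t,d_b,d_x}$ was fixed once and for all there is at most one $S_1\in\Theta_1$ with $I_{t,d_b,d_x}(S_1)=S_2$. Given $S_1$, undoing $ax,by\mapsto ab,xy$, recolouring the $d_b+d_x$ edges according to $S_1$, and restoring the relative order of the point labels of $B$ and $X$ recovers $\s_1$ uniquely, so $\s$ has at most one $BX$-twin. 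The one place that requires genuine care (the rest being routine bookkeeping) is the second paragraph: matching the shape constraints defining $\Theta_1,\Theta_2$ against the definitions of $t_a,t_b,t_x,t_y$, and keeping straight that the moved points $b$ and $x$ are \emph{excluded} from $t_b,t_x$ but do still enter $\mathrm{sat}(B),\mathrm{sat}(X)$.
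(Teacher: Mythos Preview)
Your proof is correct and follows essentially the same approach as the paper's: both arguments show that the construction swaps the saturation steps of $B$ and $X$ while leaving all other vertices unchanged, then invoke the injectivity of $I_{t,d_b,d_x}$ for part~(d). Your bookkeeping via the explicit step-lists $p_1<\dots<p_{d_b+d_x}$ and $q_1<\dots<q_{\lambda_{BX}}$ is slightly more systematic than the paper's narrative version (and in fact handles the $BX$-edges more carefully), but the underlying argument is the same.
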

\begin{proof}
We first argue that~$t_b(\s_1)=t_x(\s_2)$. 
Since $t_x(\s_1)<t_b(\s_1)$, the last edge in $\s_1$ which has $B$ as an endpoint cannot be a $BX$ edge.  So $t_b(\s_1)$ is the index corresponding to the last element of $S_1$.  That edge is modified to have endpoint $X$ in $\s_2$, and no edges following it in $\s_1$, except possibly $ab$ and $xy$, are modified. Also, no edges following it, except possibly $xy$, have $X$ as an endpoint, since $t_x(\s_1)<t_b(\s_1)$.  So that index becomes $t_x(\s_2)$, establishing~$t_b(\s_1)=t_x(\s_2)$. 

Similar reasoning shows that~$t_b(\s_2)=t_x(\s_1)$.  The two key additional insights are: (i) every character after that point in $S_2$ is an $X$. Thus (ii) any  edges after that point in $\s_2$ that have $B$ as an endpoint must be $BX$ edges or $ab$. The index of the latest such edge that is not $ab$, if any exist, would be $t_x(\s_1)$ and $t_b(\s_2)$.

Now, since the positions of all copies of~$a,y$ are the same in~$\s_2$ as in~$\s_1$, 
it follows that the findings of the first two paragraphs imply part~\eqref{16:3}. 

Part~\eqref{16:3} immediately implies part~\eqref{16:2}.

For part~\eqref{16:1} note that, since $ab,xy$ in $\s_1$ become $ax,by$ in $\s_2$, the step at which $B$ is saturated in $\s_1$ is equal to the step at which $X$ is saturated in $\s_2$, and vice versa. 
Furthermore, every other vertex becomes saturated at the same step of $\s_1$ and $\s_2$ (since they all appear on the same edges and in the same position in both). 
Hence~${\G_i(\s_1)=\G_i(\s_2)}$ for all~$1\le i\le m$, which implies ${Z(\s_1)=Z(\s_2)}$, establishing part~\eqref{16:1}.

Part~\eqref{16:4} follows because $I_{t,d_b,d_x}$ is an injection. 
Indeed, given $\s_2\in\cP^-$ with $t_b(\s_2)<t_x(\s_2)$, we look at the~$d_b$ edges incident to~$B$ and~$d_x$ edges incident to~$X$, other than $ax, by$ and any $BX$ edges. 
This specifies the sequence $S_2\in \Theta_2$ and the latest $B$ in that sequence establishes the value~$t$. 
Now we obtain $S_1=I^{-1}_{t,d_b,d_x}(S_2)$, which then allows us to uniquely reconstruct the $BX$-twin of~$\s_2$, concluding the proof. 
\end{proof}

Next consider any $\s_1\in\cP^-$ with $t_x(\s_1)<t_b(\s_1)$. We can construct a $BX$-twin $\s_2\in\cP^+$ in the same manner, with only the following change: In $\sigma_1$, replace $ax$ with $ab$ and $by$ with $xy$. The obvious analogue of \refL{ltbtx} holds, by the same proof. This will be encompassed by the more general \refL{ltwins} below.

\subsubsection{$AY$-twins and properties of twins}
Consider any $\s_1\in\cP^+\cup \cP^-$ with $t_y(\s_1)<t_a(\s_1)$. 
We construct its {\em $AY$-twin} $\s_2\in\cP^-$ using the same construction as for $BX$-twins, 
where we this time swap the points of $A$ and $Y$ not in an $AY$ edge (instead of the points of $B$ and $X$ not in an $BX$ edge). 
Besides notational changes, this really just corresponds to interchanging the roles of the vertices $B$ and~$A$ as well as the vertices $X$ and $Y$ in the construction from \refS{sec:bxtwins}. 
In particular, we here use a sequence of $d_a:=\deg(A)-1-\lambda_{AY}$ many~$A$'s and $d_y:=\deg(Y)-1-\lambda_{AY}$ many~$Y$'s. 
We replace step~\ref{step1} with the following: (a)~if $\s_1\in\cP^+$, then,  in $\s_1$, replace $ab$ with $by$ and $xy$ with $ax$, and (b)~if $\s_1\in\cP^-$ then, in $\s_1$, replace $ax$ with $xy$ and $by$ with $ab$.
In steps~\ref{step2} and \ref{step3} we use the injection $I_{t,d_a,d_y}$, 
where~$t$ is defined analogously as in the $BX$-twin case  (the assumptions of \refS{oswap} hold, since $\deg(A)<\deg(Y)$ implies $d_a<d_y$), 
and modify step~(iv) in the obvious way. 

We henceforth use the term {\em twins} to refer to both $BX$-twins and $AY$-twins. 
Then the obvious analogue of \refL{ltbtx} holds (by the same proof), 
and we now record these key properties for later reference. 
\begin{lemma}[Key properties of twins]\label{ltwins}
Defining~$\cP:=\cP^-\cup \cP^+$, the following holds for twins:\vspace{-0.5em}%
\begin{enumerate}[(a)]
	\parskip 0em  \partopsep=0pt \parsep 0em \itemsep 0.0675em
\item\label{17:1} If $(\s_1,\s_2)$ are twins, then $Z(\s_1)=Z(\s_2)$.
\item\label{17:2} Every $\s\in\cP$ with $t_x(\s)<t_b(\s)$ has exactly one $BX$-twin, and every $\s\in\cP$ with $t_y(\s)<t_a(\s)$ has exactly one $AY$-twin.
\item\label{17:3} Every $\s\in\cP$ with $t_b(\s)<t_x(\s)$ has at most one $BX$-twin, and every $\s\in\cP$ with $t_a(\s)<t_y(\s)$ has at most one $AY$-twin.
\item\label{17:4} If $\s$ and $\s'$ are counterparts then either (i)~they both do not have $BX$-twins, or (ii)~their $BX$-twins are counterparts of each other.  The same is true for $AY$-twins.
\end{enumerate}
\end{lemma}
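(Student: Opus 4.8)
The plan is to derive all four parts of \refL{ltwins} from \refL{ltbtx} and its three analogues --- the $\cP^-$-input variant for $BX$-twins and the two $AY$-variants obtained by interchanging the roles of $B\leftrightarrow A$ and $X\leftrightarrow Y$ --- together with the fact that the construction of \refS{sec:bxtwins} is deterministic and built on the fixed injections $I_{t,d_b,d_x}$. Part~\eqref{17:1} is then immediate: given twins $(\s_1,\s_2)$ I would relabel so that $\s_1$ is the edge-sequence to which the construction was applied (so $t_x(\s_1)<t_b(\s_1)$ for a $BX$-twin, resp.\ $t_y(\s_1)<t_a(\s_1)$ for an $AY$-twin), whence $\s_2$ is exactly the construction-output and $Z(\s_1)=Z(\s_2)$ by \refL{ltbtx}\eqref{16:1} or the relevant analogue (only points of $B,X$, resp.\ $A,Y$, move, and those two vertices swap their saturation steps, so $\G_i(\s_1)=\G_i(\s_2)$ for every $i$). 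Part~\eqref{17:3} is likewise immediate: it is \refL{ltbtx}\eqref{16:4} together with its $\cP^+$-input and $AY$-analogues, since a $BX$-twin of an $\s$ with $t_b(\s)<t_x(\s)$ must be some $\rho$ with $t_x(\rho)<t_b(\rho)$ whose construction-output is $\s$, and the injectivity of $I_{t,d_b,d_x}$ shows $\s$ determines such a $\rho$ uniquely (recover $S_2$ from $\s$, then $t$, then $S_1=I^{-1}_{t,d_b,d_x}(S_2)$, then $\rho$).

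For part~\eqref{17:2} I would first prove existence: given $\s\in\cP$ with $t_x(\s)<t_b(\s)$, run the construction of \refS{sec:bxtwins} (if $\s\in\cP^+$) or its $\cP^-$-variant (if $\s\in\cP^-$), after checking that $t,d_b,d_x$ meet the hypotheses $0<d_b<d_x\le t<d_b+d_x$ of \refO{oswap}. Here $\deg(B)<\deg(X)$ gives $d_b<d_x$; the inequality $t_x(\s)<t_b(\s)$ forces $d_b\ge1$, because the latest edge through a point of $B\setminus\{b\}$ cannot be a $BX$-edge (otherwise it would also run through a point of $X\setminus\{x\}$, giving $t_x(\s)\ge t_b(\s)$), and it forces the last entry of $S_1$ to be a $B$, because every non-$BX$ edge through $X\setminus\{x\}$ occurs no later than step $t_x(\s)<t_b(\s)$, so $d_x\le t<d_b+d_x$. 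Uniqueness follows since the construction is deterministic, so $\s$ has at most the single $BX$-twin it produces; any other $BX$-twin would, by definition, be an edge-sequence $\rho$ with $t_x(\rho)<t_b(\rho)$ whose construction-output is $\s$, but by \refL{ltbtx}\eqref{16:2} that output has $t_b<t_x$, contradicting $t_x(\s)<t_b(\s)$. The $AY$-statements of~\eqref{17:2} are identical using $\deg(A)<\deg(Y)$.

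The core of the proof is part~\eqref{17:4}, which I would reduce to the sub-claim: \emph{if $\s\in\cP^+$ and $\s'\in\cP^-$ are counterparts with $t_x(\s)<t_b(\s)$, then their $BX$-twins $\tau\in\cP^-$ and $\tau'\in\cP^+$ --- which exist by~\eqref{17:2}, since counterparts share $t_a,t_b,t_x,t_y$ by \refO{osbar}~\ref{13:2} --- are counterparts.} To see this, note that $\s$ and $\s'$ differ only in that position $\s(ab)$ carries the edge $ab$ in $\s$ and $ax$ in $\s'$, while position $\s(xy)$ carries $xy$ in $\s$ and $by$ in $\s'$; these are precisely the two edges excluded when forming the auxiliary sequence $S_1$ in each case, so $S_1(\s)=S_1(\s')$, hence $t$ coincides, hence $S_2=I_{t,d_b,d_x}(S_1)$ coincides, and steps~(ii)--(iv) of the construction perform identical point-swaps on the remaining $B$- and $X$-points (none of which is $a,b,x,y$). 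Therefore $\tau$ and $\tau'$ agree off positions $\s(ab),\s(xy)$, where $\tau$ carries $ax,by$ and $\tau'$ carries $ab,xy$; that is, $\tau$ arises from $\tau'$ by replacing $ab$ with $ax$ and $xy$ with $by$, so $\tau,\tau'$ are counterparts. Granting the sub-claim, the full statement of~\eqref{17:4} follows by a short case analysis on counterparts $\s,\s'$: if $t_b(\s)<t_x(\s)$ and $\s$ has a $BX$-twin, that twin is the construction-output of some $\rho$ with $t_x(\rho)<t_b(\rho)$, and applying the sub-claim to $\rho$ and its counterpart $\rho'$ shows $\s'$ has a $BX$-twin, namely the counterpart of the twin of $\s$; and $t_b(\s)=t_x(\s)$ can occur only via a $BX$-edge realizing both values, in which case neither $\s$ nor $\s'$ has a $BX$-twin. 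The $AY$-case runs identically, with $(A,Y)$ in place of $(B,X)$ and the corresponding edge substitutions.

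I expect the main obstacle to be exactly the bookkeeping in the sub-claim of part~\eqref{17:4}: verifying carefully that passing from $\s$ to its counterpart $\s'$ leaves $S_1$, the index $t$, the image $S_2$, and all the point-relabelings of steps~(ii)--(iv) unchanged, and --- throughout all parts --- keeping straight which member of a twin pair is the one the construction was applied to (the one with $t_x<t_b$, resp.\ $t_y<t_a$), since the construction and the analogue of \refL{ltbtx} are phrased only in that direction. Everything else reduces cleanly to \refL{ltbtx} and the injectivity of $I_{t,d_b,d_x}$.
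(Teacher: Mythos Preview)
Your proposal is correct and follows essentially the same approach as the paper: parts~\eqref{17:1}--\eqref{17:3} are reduced to \refL{ltbtx} and its analogues (the paper dispatches~\eqref{17:2} with the phrase ``by construction'' where you verify the hypotheses of \refO{oswap} explicitly), and part~\eqref{17:4} is proved by observing that counterparts share the auxiliary data $d_b,d_x,t_b,t_x$ and the same $S_1$-sequence, so the construction (or its inverse via $I^{-1}_{t,d_b,d_x}$) produces counterparts from counterparts. Your sub-claim argument for~\eqref{17:4} is exactly the detailed version of what the paper sketches in one sentence.
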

\begin{proof} Parts~\eqref{17:1} and~\eqref{17:3} are simply restatements of parts~\eqref{16:1} and~\eqref{16:4}  of \refL{ltbtx} and its analogues. 
Part~\eqref{17:2} holds by construction of twins. 
Part~\eqref{17:4} follows from the fact that, as counterparts, both $\s$ and $\s'$ will have the same values of $d_b, d_x, t_b,t_x$ and the same ordering of edges incident to $B$ and $X$ except $ab, xy, ax, by$. So if one of them is a $BX$-twin then either $t_x<t_b$ or we can reconstruct their $BX$-twin using $I^{-1}_{t,d_b,d_x}$ as in the proof of \refL{ltbtx}\eqref{16:4}. In both cases both $\s$ and $\s'$ are $BX$-twins.
\end{proof}

\subsubsection{Quadruplets}
Consider $\s_1\in\cP^+\cup \cP^-$ with $t_x(\s_1)<t_b(\s_1)$ and $t_y(\s_1)<t_a(\s_1)$. 
Then~$\s_1$ has both a $BX$-twin~$\s_2$ and an  $AY$-twin~$\s_3$. 
By \refL{ltbtx} and its analogues, $t_y(\s_2)<t_a(\s_2)$ and $t_x(\s_3)<t_b(\s_3)$.  
Therefore, $\s_2$ has an $AY$-twin and~$\s_3$ has a $BX$-twin.  
In the proof of~\refL{twins}, the following result will eventually allow us to uniquely pair up edge-sequences with their twins 
(this is not obvious, because there could exist edge-sequences $\s_1, \s_2$ and $\s_3$ that have two twins). 
\begin{lemma}\label{lquads}
The  $AY$-twin of $\s_2$ is equal to the $BX$-twin of $\s_3$.
\end{lemma}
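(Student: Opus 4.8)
The plan is to show that the $BX$-twin operation and the $AY$-twin operation commute; the common value of the two compositions is then exactly the asserted equality. By symmetry we may assume $\sigma_1\in\cP^+$. The key structural observation is that the $BX$-twin construction of \refS{sec:bxtwins} never moves an edge to a different position, never alters an endpoint lying in $A$ or $Y$, and never relabels a point of $A$ or $Y$: it only re-routes certain $\{B,X\}$-endpoints (flipping $X\leftrightarrow B$, together with the swap $b\leftrightarrow x$ on the two switching edges in step~\ref{step1}) and relabels the points of $B$ and of $X$ in step~\ref{step4}. Symmetrically, the $AY$-twin construction only touches $\{A,Y\}$-endpoints and relabels points of $A$ and of $Y$. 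Thus the two operations act on disjoint endpoint slots and on disjoint sets of point labels, and both fix every position.

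First I would check that all the data feeding the $AY$-twin construction agrees whether it is read off from $\sigma_1$ or from its $BX$-twin $\sigma_2$: the parameters $d_a=\deg(A)-1-\lambda_{AY}$ and $d_y=\deg(Y)-1-\lambda_{AY}$ (here $\lambda_{AY}$ is unchanged, since the $BX$-operation preserves all $A$- and $Y$-incidences and the switching edges of $\sigma_1,\sigma_2$ are never $AY$-edges), the sequence of $A$'s and $Y$'s together with the threshold $t$ (the relevant edges, their positions, and their $A$-versus-$Y$ labels are untouched by the $BX$-operation), and the relative order of the labeled points of $A$ and of $Y$ (not relabeled by the $BX$-operation). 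Since the fixed injection $I_{t,d_a,d_y}$ is thus applied to identical input, the $AY$-twin of $\sigma_2$ performs exactly the same family of $\{A,Y\}$-endpoint flips and $\{A,Y\}$-relabelings as the $AY$-twin of $\sigma_1$ would. By the mirror-image reasoning, the $BX$-twin of $\sigma_3$ performs exactly the same $\{B,X\}$-modifications as the $BX$-twin of $\sigma_1$.

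It then follows that applying both modifications in either order produces the edge-sequence obtained from $\sigma_1$ by carrying them out simultaneously, so the two compositions coincide. The only edges needing explicit care are those simultaneously incident to $\{B,X\}$ and to $\{A,Y\}$ (e.g.\ an $AX$- or $BY$-edge), where the two flips act on the edge's two distinct endpoints and so cannot conflict, and the two switching edges, where one checks directly that doing "$ab\to ax,\ xy\to by$" then "$ax\to xy,\ by\to ab$" has the same net effect as doing "$ab\to by,\ xy\to ax$" then "$ax\to ab,\ by\to xy$": in both cases $ab$ ends at position $\sigma_1(xy)$ and $xy$ at position $\sigma_1(ab)$, with the same endpoints in $A,B,X,Y$. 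I expect the main obstacle to be the bookkeeping around step~\ref{step4}: one must verify that the structure relative to which the points of $B,X$ (resp.\ of $A,Y$) are relabeled — that is, which positions of the resulting sequence are incident to each of those vertices — is the same no matter which order the two operations are applied in; this holds because neither operation changes the incidences of the vertices touched by the other, but it does need to be spelled out edge-class by edge-class.
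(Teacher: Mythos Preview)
Your proposal is correct and follows essentially the same approach as the paper: both argue that the $BX$-twin operation touches only $\{B,X\}$-endpoints while the $AY$-twin operation touches only $\{A,Y\}$-endpoints, so the two constructions commute (with an explicit verification of what happens to the switching edges). Your write-up is somewhat more detailed in handling the edge-classes and the relabeling step~\ref{step4}, but the underlying idea is identical to the paper's proof.
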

\begin{proof} When we construct a $BX$-twin, we do not alter any endpoints that are $A$ or $Y$, and the edges containing those endpoints remain in the same position in the edge-sequences.  It follows that when we construct the $AY$-twin of $\s_2$ and the $AY$-twin of $\s_1$, we alter the exact same sequence $S_1$ of $A$'s and $Y$'s and, since we always use the same injection $I_{t,d_a,d_y}$, we alter it in the same way.  The only difference in the two constructions is that, for  one twin we change $(ab,xy)$ to $(by,ax)$ and for the other we change $(ax,by)$ to $(xy,ab)$. 
Similarly for when we construct the $BX$-twins of $\s_1$ and $\s_3$.

Thus, the sequence of two modifications:  $\s_1\rightarrow \s_2\rightarrow AY$-twin of $\s_2$ makes the same alterations as the sequence $\s_1\rightarrow \s_3\rightarrow BX$-twin of $\s_3$, with the following exception: In the first sequence, if $\s_1\in\cP^+$, we have $(ab,xy)\rightarrow (ax,by)\rightarrow (xy,ab)$, and in the other sequence, $(ab,xy)\rightarrow (by,ax)\rightarrow (xy,ab)$.  If $\s_1\in\cP^-$ then we have $(ax,by)\rightarrow (ab,xy)\rightarrow (by,ax)$ and $(ax,by)\rightarrow (xy,ab)\rightarrow (by,ax)$.  So in both sequences, we end up with the same pair, concluding the proof. 
\end{proof}

We henceforth refer to  $\s_1, \s_2, \s_3$ and the $AY$-twin of $\s_2$ (i.e., the $BX$-twin of $\s_3$) as {\em quadruplets}.

\subsubsection{Summing over twins and quadruplets: proof of of \refL{twins}}\label{sec:summingtwins}
In the following proof of \refL{twins} we construct the desired pairs of edge-sequences~${(\sigma_2,\sigma_1)}$, 
defining~$\cT{\subseteq\cP^+\cup\cP^-}$ as the set of all edge-sequences~$\s$ which have a twin.
\begin{proof}[Proof of \refL{twins}]
The goal of part~\ref{lem:twins:sum} is to show that
\begin{equation}\label{eq:lemmatwins}
\sum_{\sigma\in\cT\cap\cP^+}Z(\s)= \sum_{\sigma\in\cT\cap\cP^-}Z(\s).
\end{equation}
To prove this equation, we apply Lemmas~\ref{ltwins} and~\ref{lquads} to pair terms from the left-hand side of~\eqref{eq:lemmatwins} with terms from the right-hand side of~\eqref{eq:lemmatwins}. 
More precisely, for each set of quadruplets, we arbitrarily choose one of the two ways to split the four into two pairs, where each pair~${(\sigma_2,\sigma_1) \in (\cP^+ \times \cP^-)\cup (\cP^-\times \cP^+)}$ is a set of twins.
For each remaining edge-sequence~$\sigma_2 \in \cP^+$ that has a twin (and is not part of a quadruplet), 
we then form the pair~$(\sigma_2,\sigma_1)$, where~$\sigma_1 \in \cP^-$ is the unique twin of~$\sigma_2$. 
By \refL{ltwins}\eqref{17:1}, all paired edge-sequences each have the same $Z$-value. 
So in~\eqref{eq:lemmatwins} all terms cancel out, establishing that the left-hand side of~\eqref{eq:lemmatwins} is equal to the right-hand side of~\eqref{eq:lemmatwins}, concluding the proof of part~\ref{lem:twins:sum}. 

Part~\ref{lem:twins:equiv} follows from \refL{ltwins}\eqref{17:4} and the fact that the position of the edges $ab,xy,ax,by$ in $\s$ plays no role in the construction of twins.  

Part~\ref{lem:twins:techn} follows from \refL{ltwins}\eqref{17:2}.  

For part \ref{lem:twins:techn2}, for the sake of contradiction suppose $\s_1\in \cP$ is a $BX$-twin of $\sigma$. Then $t_x(\s_1)<t_b(\s_1)$ and no points of~$X,Y$ are adjacent to any points of~$A,B$ in~$\s_1$. Then by \refO{oswap} (ii), $\sigma$ satisfies the following: there is some $t\ge d_x>d_b$ such that the $t^{\rm th}$ edge in the sequence of edges incident to~$B$ and~$X$ is incident to~$B$.
In particular we see that not all edges incident to~$X$ appear after all edges incident to~$B$, a contradiction. 
Therefore~$\sigma$ is not a $BX$-twin, and similarly $\sigma$ is not an $AY$-twin, which completes the~proof.
\end{proof}

\section{Counting results: deferred proofs}\label{sec:counting}
This section is devoted to the deferred proofs of the counting results \refL{coefficients} and \ref{goodpermutation} from \refS{sec:switchingcounting}, 
which give bounds on the number of clusters containing a given configuration-graph and the number of good edge-sequences. 
Both proofs are based on counting arguments and the following two observations.  
For brevity, we henceforth always tacitly assume that~$\gamma>0$ is sufficiently small 
and that~$n \ge n_0(\gamma,\Delta,\xi)$ is sufficiently~large (whenever~necessary). 
\begin{observation}\label{technical}
There are constants $c=c(\xi,\Delta) > 1/2$ and $\nu=\nu(\xi,\Delta)>0$ 
such that~${m \ge \max\bigcpar{c \stubs,\mu}}$ and~${\mu+m-\stubs \ge \nu \max\{m,\mu\}}$. 
\end{observation}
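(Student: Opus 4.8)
The plan is to reduce everything to a few elementary estimates on the quantity $s:=\stubs$, the number of stubs belonging to vertices of degree at most~$k$, using two facts: the algebraic identity $\mu=s^2/(4m)$ together with the trivial bound $s\le 2m$, and the consequence of the non-regularity assumption~\eqref{keyassumption} that $2m-s=\sum_{j>k}jn_j$ is a positive fraction of~$n$.

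First I would collect the basic bounds. Since all degrees lie in $\{1,\dots,\Delta\}$ we have $2m=\sum_{1\le j\le\Delta}jn_j\le\Delta n$, and since every term of $s$ occurs among the terms of $2m$ we get $s\le 2m$; hence $\mu=s^2/(4m)\le(2m)^2/(4m)=m$, so in particular $\max\{m,\mu\}=m$, which already simplifies both target inequalities. The non-regularity assumption $\sum_{1\le j\le k}n_j\le(1-\xi)n$ gives $\sum_{j>k}n_j\ge\xi n$, so
\[
2m-s=\sum_{j>k}jn_j\ge(k+1)\sum_{j>k}n_j\ge(k+1)\xi n\ge 2\xi n,
\]
using $k\ge1$.

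For the first inequality I would write $2m=s+(2m-s)\ge s+2\xi n$ and then bound $2\xi n=\tfrac{2\xi}{\Delta}\,\Delta n\ge\tfrac{2\xi}{\Delta}\,(2m)\ge\tfrac{2\xi}{\Delta}\,s$, so that $2m\ge\bigl(1+\tfrac{2\xi}{\Delta}\bigr)s$ and hence $m\ge\bigl(\tfrac12+\tfrac{\xi}{\Delta}\bigr)s$; thus $c:=\tfrac12+\tfrac{\xi}{\Delta}>\tfrac12$ works, and combined with $\mu\le m$ this gives $m\ge\max\{c\stubs,\mu\}$. For the second inequality, the key step is the identity
\[
\mu+m-s=\frac{s^2}{4m}+m-s=\frac{s^2+4m^2-4ms}{4m}=\frac{(2m-s)^2}{4m},
\]
which in particular shows $\mu+m-s\ge0$ and neatly handles the (possibly negative) term $m-s$. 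Using $2m-s\ge2\xi n$ and $m\le\Delta n/2$ I then estimate
\[
\mu+m-s=\frac{(2m-s)^2}{4m}\ge\frac{(2\xi n)^2}{4\cdot\Delta n/2}=\frac{2\xi^2 n}{\Delta}\ge\frac{2\xi^2}{\Delta}\cdot\frac{2m}{\Delta}=\frac{4\xi^2}{\Delta^2}\,m=\nu\max\{m,\mu\},
\]
with $\nu:=4\xi^2/\Delta^2$, which completes the proof.

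I do not expect a serious obstacle here: the whole argument is a short computation, valid for every~$n$ (no largeness assumption is needed). The only things one has to notice are that $\max\{m,\mu\}=m$, which follows from $s\le 2m$, and the identity $\mu+m-s=(2m-s)^2/(4m)$, which means one never needs $m\ge s$ (an inequality that may genuinely fail); after that, the lower bound $2m-s\ge 2\xi n$ supplied by the non-regularity assumption~\eqref{keyassumption} does all the work.
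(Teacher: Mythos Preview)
Your argument is correct and essentially the same as the paper's: both use the identity $\mu+m-s=(2m-s)^2/(4m)=\bigl(1-\tfrac{s}{2m}\bigr)^2 m$ together with the non-regularity lower bound on $2m-s$ to get the second inequality, and the bound $s\le 2m$ to get $m\ge\mu$. Your version is slightly more explicit, supplying concrete values $c=\tfrac12+\tfrac{\xi}{\Delta}$ and $\nu=4\xi^2/\Delta^2$, whereas the paper leaves the constants implicit.
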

\begin{proof}
Since by assumption~$\sum_{k < j \le \Delta}n_i = n - \sum_{j \in [k]} n_j\ge \xi n$, it follows that $m \ge c \stubs$ for some constant ${c=c(\xi,\Delta) > 1/2}$. 
Hence
\begin{equation*}
        \mu+m-{\textstyle\stubs} 
				= \left(1-\frac{\stubs}{2m}\right)^2 \cdot m \ge \left(1-\frac{1}{2c}\right)^2 \cdot m	,
\end{equation*}
which completes the proof  by noting that~$m^2 \ge (\stubs)^2/4$ implies~$m \ge \mu$.
\end{proof}
%
Recall that an edge is {\em small} if both endpoints have degree at most~$k$ (see~\refS{sec:mainresult}).  
We say an edge is {\em large} if both endpoints have degree greater than~$k$, 
and {\em mixed} if one endpoint's degree is at most~$k$ and the other is greater than~$k$. 
With these definitions in hand, we now record the following consequence of our notational convention from Section~\ref{sec:switchingcounting} (see page~\pageref{notationalconvention}). 

\begin{observation}\label{obs:convention}
For all configuration-graphs~$G$ the following holds:
\vspace{-0.5em}%
\begin{romenumerate}
\parskip 0em  \partopsep=0pt \parsep 0em \itemsep 0.0675em
\item\label{obs:convention:UG}
The number~$U_G$ of upper clusters containing~$G$ equals four times the number of pairs consisting of small and large edges in~$G$.
\item\label{obs:convention:LG}
The number~$L_G$ of lower clusters containing~$G$ equals twice the number of pairs consisting of mixed edges in~$G$ whose endpoints are contained in four distinct~vertices.
\end{romenumerate}
\end{observation}
\begin{proof}
For part~\ref{obs:convention:UG}, note that for every small edge~$e$ and large edge~$e'$ in a graph $G$, there are four upper clusters of the form~$C^+(G,e,e')$. For part~\ref{obs:convention:LG}, note that for every pair of edges~$e,e'$ in a graph~$G$, each with one endpoint of degree at most $k$ and another of degree greater than $k$, there are two lower clusters of the form~$C^-(G,e,e')$ 
if the endpoints of $e$ and $e'$  are contained in four distinct vertices (note that the distinct vertices condition is baked into the definition of clusters, as we insist on $A,B,X,Y$ being distinct~vertices). 
\end{proof}

\subsection{Counting clusters: proof of \refL{coefficients}}\label{sec:coefficients}
Recall that~$U_G$ and~$L_G$ count the number of upper and lower clusters that contain the configuration-graph~$G$ (see~\refS{sec:switchingcounting}).   
%
%
To prove \refL{coefficients}, we shall combine Observation~\ref{obs:convention} with counting arguments to show that~$L_G$ and~$U_H$ are approximately equal when~$G$ and~$H$ contain similar numbers of small edges.
%
%
\begin{proof}[Proof of \refL{coefficients}]
We first estimate~$L_G$ and $U_G$ for any configuration-graph~$G$ with $\ell=\Xs(G)$ small edges. 
Recall that an edge is called mixed if one endpoint's degree is at most~$k$ and the other is greater than~$k$. 
By Observation~\ref{obs:convention}, the number~$L_G$ of lower clusters containing $G$ equals to twice the number of pairs~$(e,e')$ of mixed edges in~$G$ 
where the endpoints of $e,e'$ lie on four different vertices. 
Since there are $2 \cdot \Xs(G) = 2\ell$ small points whose neighbor is also small, we infer that there are~$\stubs-2\ell$ mixed edges. It follows~that 
\begin{equation}\label{eq:LG:1}
\binom{\stubs-2\ell}{2}-\bigpar{{\textstyle\stubs}-2\ell} \cdot 2\Delta \: \le \: \frac{L_G}{2} \: \le \: \binom{\stubs-2\ell}{2} .
\end{equation}
By Observation~\ref{obs:convention}, the number~$U_G$ of upper clusters containing~$G$ equals to four times the number of pairs of edges~$(e,e')$ where $e$ is small and $e'$ is large in~$G$.
Since there are $\ell$ small edges and ${m-\ell-(\stubs-2\ell)}={m+\ell-\stubs}$ large edges, it follows~that 
\begin{equation}\label{eq:UG}
U_G
=4\ell\bigpar{\ell+m-{\textstyle\stubs}}. 
\end{equation}

We next simplify inequality~\eqref{eq:LG:1} for~$L_G$ when~$G$ has~$\ell\le {(1+\gamma)\mu}$ small edges.
Using \refO{technical} we infer~${2\ell} \le {(1+\gamma)\stubs/(2c)}$ for some~$c>1/2$, so that~${\stubs-2\ell} = \Omega(\stubs) = \omega(1)$ for all sufficiently small~$\gamma>0$.
Hence inequality~\eqref{eq:LG:1} implies
\begin{equation}\label{eq:LG}
L_G = (1+o(1)) \cdot \bigpar{{\textstyle\stubs}-2\ell}^2 = \omega(1).
\end{equation}

Finally, combining~\eqref{eq:UG}--\eqref{eq:LG} with~$4m\mu=(\stubs)^2$ and \refO{technical}, 
for all sufficiently small~$\gamma>0$ it follows that for all configuration-graphs~$G, H\in \cN_{\gamma\mu}$ we have  
\begin{equation}\label{LUcoefficient}
    \begin{split}
        \frac{L_G}{U_H}&
				= \frac{(1+o(1)) \cdot \bigpar{\stubs- 2 [1+O(\gamma)]\mu}^2}{(1+O(\gamma)) \cdot  4\mu\bigpar{[1+O(\gamma)]\mu+m-\stubs}}\\
				&=(1+O(\gamma)) \cdot \biggpar{1+\frac{(\stubs)^2-4m\mu + O\bigpar{\gamma\mu\bigpar{\stubs+\mu}}}{4\mu\bigpar{[1+O(\gamma)]\mu+m-\stubs}}}\\
				&= (1+O(\gamma)) \cdot \biggpar{1+\frac{O\bigpar{\gamma m}}{\nu \max\{m,\mu\} - O(\gamma \mu)}} =1+O(\gamma) ,
    \end{split}
\end{equation}
which together with~\eqref{eq:LG} also implies $U_H =\omega(1)$, completing the proof. 
\end{proof}

\subsection{Counting good edge-sequences: proof of \refL{goodpermutation}}\label{sec:goodpermutation}
Recall that in any upper cluster~$C^+=C^+(G,ab,xy)$ the edge~$ab$ is small and the edge~$xy$ is large (see \refS{sec:clusterdefinition}). 
By Observation~\ref{obs:convention} we know that $U_G$ equals four times the number of pairs consisting of small and large edges, 
and so it follows (with room to spare) 
that the desired estimate~\eqref{eq:goodpermutation} of \refL{goodpermutation} would be implied~by
\begin{equation}\label{eq:goodpermutation:goal}
\Lambda_{G,\sigma} :=\frac{|\{ab,xy\in E(G)\: : \: ab \text{ small, } xy \text{ large, } (\sigma, ab,xy)\in \mathbb{G}\}|}{|\{ab,xy\in E(G)\: :\: ab \text{ small, } xy \text{ large}\}|}  \; \ge \; \frac{1}{2} .
\end{equation}
Intuitively \eqref{eq:goodpermutation:goal} says that at least half of the choices of small edges~$ab \in E(G)$ and large edges~$xy \in E(G)$ yield a good edge-sequence~$(\sigma,ab,xy)$. 
To prove this estimate, 
we shall first use counting arguments to show that many small edges have endpoints lying in vertices that do not become saturated during the last~$\zeta m$ steps of the edge-sequence $\s$. 
Then, for each such small edge~$ab$, we shall use counting arguments to show that almost all large edges~$xy$ yield a good edge-sequence~$(\sigma, ab, xy) \in \mathbb{G}$, i.e., $X$ and~$Y$ are not adjacent to~$A$ and~$B$ (recall that $A, B, X, Y$ denote the vertices that contain the points $a,b,x,y$). 
As we shall see, this together implies that many edge-sequences are~good, 
which eventually establishes~\eqref{eq:goodpermutation:goal} and thus~\eqref{eq:goodpermutation}. 
%
\begin{proof}[Proof of \refL{goodpermutation}]
We henceforth fix a configuration-graph~$G \in \cN_{\gamma\mu}$ and an edge-sequence $\sigma \in \Pi_G$ of the edges of~$E(G)$. 
A small edge $ab\in E(G)$ is called early if both vertices~$A,B$ 
saturate during the first~$(1-\zeta)m$ steps.
Inspecting~$\Lambda_{G,\sigma}$ from~\eqref{eq:goodpermutation:goal}, 
by definition of good edge-sequences~$\bG$ (see~\refS{sec:switchingcounting}) we~have 
\begin{equation}\label{eq:goodpermutation:first}
\begin{split} 
\Lambda_{G,\sigma} \; \ge \; 
\frac{|\{ab \in E(G) \: : \: \text{$ab$ small, } \text{$ab$ early}\}|}{|\{ab \in E(G) \: : \: \text{$ab$ small}\}|} 
\cdot 
\min_{\substack{ab\in E(G):\\\text{$ab$ small,}\\ \text{$ab$ early}}}\frac{|\{xy \in E(G) \: : \: \text{$xy$ large, } (\sigma, ab,xy)\in \mathbb{G}\}|}{|\{xy \in E(G) \: : \: \text{$xy$ large}\}|}.
\end{split}
\end{equation}

It remains to bound the two fractions in the right-hand side of~\eqref{eq:goodpermutation:first} from below, 
and we start with the first fraction. 
As before, we write~$\ell=\Xs(G)$ for the number of small edges of~$G$, where~$\ell \ge (1-\gamma)\mu$ by assumption. 
During the final~$\zeta m$ steps of the process, at most $2 \cdot \zeta m \le \zeta n\Delta$ small vertices become saturated. 
Noting that $2m \le n\Delta$ and $\gamma \le 1/2$ imply $\ell \ge (1-\gamma)\mu\ge (\stubs)^2/8m 
\ge \xi^2 n/(4\Delta)$, 
using the definition~\eqref{def:zeta} of $\zeta = \xi^2/(16\Delta^3)$  
it follows that 
\begin{equation}\label{eq:goodpermutation:ell}
\frac{|\{ab \in E(G) \: : \: \text{$ab$ small, } \text{$ab$ early}\}|}{|\{ab \in E(G) \: : \: \text{$ab$ small}\}|} 
\ge \frac{\ell - \zeta n\Delta \cdot \Delta}{\ell} 
= 1 - \frac{\xi^2n}{16 \Delta\ell} \ge \frac{3}{4}.
\end{equation}

Turning to the second fraction in the right-hand side of~\eqref{eq:goodpermutation:first}, let $ab\in E(G)$ be a small and early edge. 
There are~$\ell+m-\stubs$ large edges~$xy \in E(G)$, as established above~\eqref{eq:UG}. 
Furthermore, there are at most $2k \cdot \Delta \le 2\Delta^2$ edges~$xy \in E(G)$ with the property that at least one point of~$X,Y$ is adjacent to a point in~$A,B$ (recall that~$A,B,X,Y$ denote the vertices containing the points~$a,b,x,y$).
For sufficiently small~$\gamma>0$ \refO{technical} implies that~$\mu+m-\stubs\ge 2\gamma\mu$, 
so using~$\ell \ge (1-\gamma)\mu$ and~$\mu= \Theta(n)$ it follows~that 
\begin{equation*}
\frac{|\{xy \in E(G) \: : \: \text{$xy$ large, } (\sigma, ab,xy)\in \mathbb{G}\}|}{|\{xy \in E(G) \: : \: \text{$xy$ large}\}|}
\ge \frac{\ell+m-\stubs-2\Delta^2}{\ell+m-\stubs} 
\ge 1-\frac{2\Delta^2}{\gamma\mu}\ge \frac{2}{3} ,
\end{equation*}
which together with inequalities~\eqref{eq:goodpermutation:first}--\eqref{eq:goodpermutation:ell} establishes~\eqref{eq:goodpermutation:goal} and thus the desired estimate~\eqref{eq:goodpermutation}. 
\end{proof}

\section{Epilogue: alternative approach}\label{sec:diffeq}
In this final section we briefly discuss an alternative approach to Theorems~\ref{final} and~\ref{transferred}, 
which instead of switching is based on the differential equation method~\cite{DEM,DEM99,Warnke}. 
Our primary interest is the intriguing possibility that this 
approach can be made rigorous, 
which would lead to a conceptually simpler proof of our main~result.

The basic idea is straightforward: 
standard combinatorial methods (see \refApp{sec:uniformtransfer}) show that in the uniform model the number of small edges~$\Xs(\Gdn)$ is concentrated around~${\mu=(\stubs)^2/(4m)}$, 
and a routine application of the differential equation method shows that in the standard $\bdn$-process the number of small edges~$\Xs(\Gpdn)$ is concentrated around~$\rho_k n$, where the quantity~$\rho_k$ has an analytic description in terms of differential equations. 
To prove a discrepancy it thus suffices to show that the two rescaled means~$\mu/n$ and~$\rho_k$ differ in the limit as~$n \to \infty$.  
While this discrepancy is easy to verify for any concrete degree sequence~$\bdn$ covered by \refT{transferred} 
(by numerically solving the relevant differential equations), 
we are lacking an analytic proof technique that can prove this for all such~$\bdn$ based only on the relevant differential equations.

To stimulate further research into such analytic proof techniques 
(which would most likely enhance the power of the differential equation~method), 
as a proof of concept we now outline an analytic 
argument that works for degree sequences~$\bdn$ that are sufficiently irregular, 
i.e., under much stronger assumptions than \refT{transferred}. 
To this end we shall assume that in~$\bdn$ the number of vertices of degree~$j$ satisfies~$n_j/n\to r_j$ as~${n\to\infty}$ for all $j\in [\Delta]$, 
which in view of the (standard) combinatorial 
part~\ref{transferred:uniform} of~\refT{transferred} implies that in the uniform model we typically~have 
\[
\frac{\Xs(\Gdn)}{n} \sim \frac{\mu}{n} 
\sim \frac{(\sum_{j \in [k]}j r_j)^2}{2\sum_{j \in [\Delta]}j r_j} =: \hrho_k
\qquad \text{ and } \qquad 
\frac{m}{n} 
 \sim \frac{1}{2} \sum_{j \in [\Delta]} jr_j =: T ,
\]
where $a_n \sim b_n$ is the usual shorthand for $a_n=(1+o(1))b_n$ as~$n \to \infty$.
To analyze the number of small edges~$\Xs(\Gpdn)$ in the standard $\bdn$-process, we shall track the following random variables after each step~$i$: 
$\Ud(i)$ counts the number of unsaturated vertices, 
$\Ud_a(i)$ with~$a \in [\Delta]$ counts the number of unsaturated vertices~$v$ with degree~$d_v^{(n)}=a$ in~$\bdn$, 
and $\Xd_{a,b}(i)$ with $a,b \in [\Delta]$ counts the number of edges whose two endvertices have degrees~$a$ and~$b$ in~$\bdn$. 
A standard application of the differential equation method shows that typically the following holds: 
for all steps~${0 \le i \le m}$ we have ${\Ud(i)/n = u(t)+o(1)}$, ${\Ud_j(i)/n = u_j(t)+o(1)}$, ${\Xd_{a,b}(i)/n = x_{a,b}(t)+o(1)}$ with~$t=i/n$, 
where the functions~$u(t)$, $u_j(t)$ and $x_{a,b}(t)$ are the solutions to suitable differential equations that depend only on~$r_1, \ldots, r_{\Delta}$. 
In the standard $\bdn$-process we thus typically~have 
\[
\frac{\Xs(\Gpdn)}{n} = \sum_{1 \le a \le b \le k} \frac{X_{a,b}(m)}{n} = \rho_k + o(1) \qquad \text{ for } \qquad \rho_k:=\sum_{1 \le a \le b \le k} x_{a,b}(T)  .
\]
To prove a discrepancy in the number of small edges in both models, 
by our above discussion it thus remains to analytically show that~$\rho_k \neq \hrho_k$  
(which we know to be true by our switching based \refT{transferred}). 
The key to proving this discrepancy is to prove suitable bounds on the values of $x_{a,b}(T)$. 
We know how to do this when the degree sequence~$\bdn$ is sufficiently far from regular, in a sense specified below.  
But new ideas are needed to do so for all the degree sequences~$\bdn$ covered by \refT{transferred}. 
We close this paper with further elaboration of this analytic approach, and a description of how it works on some degree sequences~$\bdn$. 
The relevant differential equations satisfy $u_j(0)=r_j$ and $u_j'(t) \ge -2u_j(t)/u(t)$, 
from which it follows\footnote{With more work one can show that~$u_j(t) =r_j e^{-\lambda(t)} \sum_{0 \le \ell < j} \lambda^\ell(t)/\ell!$ and~$u(t)=\sum_{j \in [\Delta]}u_j(t)$ hold.} that 
\[
u_j(t) \ge r_j e^{-\lambda(t)} \qquad \text{ with } \qquad \lambda(t):=\int_0^t \frac{2}{\ud(x)} \dd x .
\]
%
The simple degree-counting argument underlying Observation~\ref{obs:unsaturated} shows that~$U(i) \ge 2(m-i)/\Delta$, 
and one can similarly show that~$u(t) \ge 2(T-t)/\Delta$, 
which by integration implies~$\lambda(t) \le \Delta \ln\bigpar{T/(T-t)}$. 
Inserting these estimates and~$u(t) \le 1$ into the relevant differential equations $x_{a,b}(0)=0$ and $\xd_{a,b}'(t)=(1+\indic{a \neq b})u_a(t)u_b(t)/\ud^2(t)$, 
we obtain~that 
\[ 
\xd_{a,b}(T)=\int_0^T \xd_{a,b}'(t) \dd t \ge (1+\indic{a\neq b}) r_ar_b \int_0^T \left(\frac{T-t}{T}\right)^{2\Delta} \dd t =  (1+\indic{a\neq b}) r_ar_b\frac{T}{2\Delta+1} ,
\]
which in turn implies that, say, 
\[
\rho_k =\sum_{1 \le a \le b \le k} x_{a,b}(T) \ge \frac{T ( \sum_{j \in [k]} r_j)^2}{2\Delta+1}  \ge \frac{T (\sum_{j \in [k]}j r_j)^2}{(2\Delta+1)k^2}.
\]
Comparing~$\hrho_k$ with~$\rho_k$, we see that the desired discrepancy in the number of small edges is 
implied by ${T/(2\Delta+1) > k^2/(4T)}$, or equivalently ${\sum_{j \in [\Delta]} jr_j > k \sqrt{2\Delta+1}}$.
While this sufficient condition is weaker than the one from \refT{transferred}, it does apply to sufficiently irregular degree sequences~$\bdn$. 
For example, this condition establishes non-contiguity for~$\Delta \ge 7$ 
when half of the vertices of~$\bdn$ have degree one and half have degree~$\Delta$  
(i.e., where~$r_1=r_{\Delta}=1/2$ and~$k=1$). 
We leave it as an open problem to find an improved analytic argument that 
recovers Theorems~\ref{final} and~\ref{transferred} based on analysis of the relevant differential~equations.

\bigskip{\noindent\bf Acknowledgements.} 
We are grateful to the referees for a careful reading and helpful suggestions. 
Lutz Warnke would also like to thank Nick Wormald for many stimulating discussions about the behavior of the degree-restricted random $\bdn$-process.

\footnotesize

\normalsize

\begin{appendix}
\section{Appendix}

\subsection{Counterexample to simple switching heuristic}\label{counterexample}
In Sections~\ref{ssps} and~\ref{sec:clustermotivation} we discussed a simple switching heuristic which suggested that, 
when the degrees of a (configuration) graph~$G^+$ satisfy $\max\{\deg(a),\deg(b)\}<\min\{\deg(x),\deg(y)\}$, 
then the switching operation which replaces the edges~$ab, xy$ with~$ax, by$ 
should decrease the probability of the resulting (configuration) graph~$G^-$, 
i.e., $G^+$ should be more likely than~$G^-$.
\begin{figure}
	\centering%
\begin{tikzpicture}
\begin{pgfonlayer}{nodelayer}
\node [style=none] (4) at (-2.875, 1) {};
\node [style=none] (5) at (-3.875, 1) {};
\node [style=black] (6) at (-3.625, 1) {};
\node [style=black] (7) at (-3.125, 1) {};
\node [style=none] (12) at (-1.375, 1) {};
\node [style=none] (13) at (-2.175, 1) {};
\node [style=black] (14) at (-1.975, 1) {};
\node [style=black] (15) at (-1.775, 1) {};
\node [style=black] (16) at (-1.575, 1) {};
\node [style=none] (17) at (-4.625, 1) {};
\node [style=none] (18) at (-5.625, 1) {};
\node [style=black] (19) at (-5.375, 1) {};
\node [style=black] (20) at (-4.875, 1) {};
\node [style=none] (21) at (-4.625, -0.98) {};
\node [style=none] (22) at (-5.625, -0.98) {};
\node [style=black] (23) at (-5.375, -0.98) {};
\node [style=black] (24) at (-4.875, -0.98) {};
\node [style=none] (25) at (-2.775, -1) {};
\node [style=none] (26) at (-3.975, -1) {};
\node [style=black] (27) at (-3.775, -1) {};
\node [style=black] (28) at (-3.525, -1) {};
\node [style=black] (29) at (-2.975, -1) {};
\node [style=none] (30) at (-2.125, 0.5) {};
\node [style=none] (31) at (-3.125, 0.5) {};
\node [style=black] (32) at (-2.8, 0.5) {};
\node [style=black] (33) at (-2.4, 0.5) {};
\node [style=none] (34) at (-2.125, -0.5) {};
\node [style=none] (35) at (-3.125, -0.5) {};
\node [style=black] (36) at (-2.875, -0.5) {};
\node [style=black] (37) at (-2.375, -0.5) {};
\node [style=none] (38) at (-2.175, -1) {};
\node [style=none] (39) at (-1.375, -1) {};
\node [style=black] (40) at (-1.575, -1) {};
\node [style=black] (41) at (-1.775, -1) {};
\node [style=black] (42) at (-1.975, -1) {};
\node [style=black] (43) at (-3.225, -1) {};
\node [style=none] (44) at (-5.375, 1.4) {$a$};
\node [style=none] (45) at (-5.375, -1.4) {$b$};
\node [style=none] (46) at (-1.525, 1.4) {$x$};
\node [style=none] (47) at (-1.525, -1.4) {$y$};
\node [style=none] (48) at (3.625, 1) {};
\node [style=none] (49) at (2.625, 1) {};
\node [style=black] (50) at (2.875, 1) {};
\node [style=black] (51) at (3.375, 1) {};
\node [style=none] (52) at (5.125, 1) {};
\node [style=none] (53) at (4.325, 1) {};
\node [style=black] (54) at (4.525, 1) {};
\node [style=black] (55) at (4.725, 1) {};
\node [style=black] (56) at (4.925, 1) {};
\node [style=none] (57) at (1.875, 1) {};
\node [style=none] (58) at (0.875, 1) {};
\node [style=black] (59) at (1.125, 1) {};
\node [style=black] (60) at (1.625, 1) {};
\node [style=none] (61) at (1.875, -0.98) {};
\node [style=none] (62) at (0.875, -0.98) {};
\node [style=black] (63) at (1.125, -0.98) {};
\node [style=black] (64) at (1.625, -0.98) {};
\node [style=none] (65) at (3.725, -1) {};
\node [style=none] (66) at (2.525, -1) {};
\node [style=black] (67) at (2.725, -1) {};
\node [style=black] (68) at (2.975, -1) {};
\node [style=black] (69) at (3.525, -1) {};
\node [style=none] (70) at (4.375, 0.5) {};
\node [style=none] (71) at (3.375, 0.5) {};
\node [style=black] (72) at (3.7, 0.5) {};
\node [style=black] (73) at (4.1, 0.5) {};
\node [style=none] (74) at (4.375, -0.5) {};
\node [style=none] (75) at (3.375, -0.5) {};
\node [style=black] (76) at (3.625, -0.5) {};
\node [style=black] (77) at (4.125, -0.5) {};
\node [style=none] (78) at (4.325, -1) {};
\node [style=none] (79) at (5.125, -1) {};
\node [style=black] (80) at (4.925, -1) {};
\node [style=black] (81) at (4.725, -1) {};
\node [style=black] (82) at (4.525, -1) {};
\node [style=black] (83) at (3.275, -1) {};
\node [style=none] (84) at (1.125, 1.4) {$a$};
\node [style=none] (85) at (1.125, -1.4) {$b$};
\node [style=none] (86) at (4.975, 1.4) {$x$};
\node [style=none] (87) at (4.975, -1.4) {$y$};
\node [style=none] (88) at (5.575, 1) {$G^-$};
\node [style=none] (89) at (-6.075, 1) {$G^+$};
\end{pgfonlayer}
\begin{pgfonlayer}{edgelayer}
\draw [bend left=90, looseness=0.75] (4.center) to (5.center);
\draw [bend right=90, looseness=0.75] (4.center) to (5.center);
\draw [bend left=90, looseness=0.75] (12.center) to (13.center);
\draw [bend right=90, looseness=0.75] (12.center) to (13.center);
\draw [bend left=90, looseness=0.75] (17.center) to (18.center);
\draw [bend right=90, looseness=0.75] (17.center) to (18.center);
\draw [bend left=90, looseness=0.75] (21.center) to (22.center);
\draw [bend right=90, looseness=0.75] (21.center) to (22.center);
\draw [bend left=90, looseness=0.75] (25.center) to (26.center);
\draw [bend right=90, looseness=0.75] (25.center) to (26.center);
\draw [bend left=90, looseness=0.75] (30.center) to (31.center);
\draw [bend right=90, looseness=0.75] (30.center) to (31.center);
\draw [bend left=90, looseness=0.75] (34.center) to (35.center);
\draw [bend right=90, looseness=0.75] (34.center) to (35.center);
\draw [bend right=90, looseness=0.75] (38.center) to (39.center);
\draw [bend left=90, looseness=0.75] (38.center) to (39.center);
\draw (20) to (6);
\draw[red,densely dashed] (19) to (23);
\draw (24) to (27);
\draw (32) to (28);
\draw (33) to (15);
\draw (14) to (7);
\draw[red,densely dashed] (16) to (40);
\draw (42) to (29);
\draw (41) to (37);
\draw (43) to (36);
\draw [bend left=90, looseness=0.75] (48.center) to (49.center);
\draw [bend right=90, looseness=0.75] (48.center) to (49.center);
\draw [bend left=90, looseness=0.75] (52.center) to (53.center);
\draw [bend right=90, looseness=0.75] (52.center) to (53.center);
\draw [bend left=90, looseness=0.75] (57.center) to (58.center);
\draw [bend right=90, looseness=0.75] (57.center) to (58.center);
\draw [bend left=90, looseness=0.75] (61.center) to (62.center);
\draw [bend right=90, looseness=0.75] (61.center) to (62.center);
\draw [bend left=90, looseness=0.75] (65.center) to (66.center);
\draw [bend right=90, looseness=0.75] (65.center) to (66.center);
\draw [bend left=90, looseness=0.75] (70.center) to (71.center);
\draw [bend right=90, looseness=0.75] (70.center) to (71.center);
\draw [bend left=90, looseness=0.75] (74.center) to (75.center);
\draw [bend right=90, looseness=0.75] (74.center) to (75.center);
\draw [bend right=90, looseness=0.75] (78.center) to (79.center);
\draw [bend left=90, looseness=0.75] (78.center) to (79.center);
\draw (60) to (50);
\draw (64) to (67);
\draw (72) to (68);
\draw (73) to (55);
\draw (54) to (51);
\draw (82) to (69);
\draw (81) to (77);
\draw (83) to (76);
\draw [blue,bend left,densely dashed] (59) to (56);
\draw [blue,bend right,densely dashed] (63) to (80);
\end{pgfonlayer}
\end{tikzpicture}%
\caption{Counterexample to the simple switching heuristic from Sections~\ref{ssps} and~\ref{sec:clustermotivation} 
(we depict configuration-graphs as defined in \refS{sec:configurationgraph}, 
where each vertex~$v$ is represented by an ellipse containing~$\deg(v)$ many points): 
the graph~$G^-$ is obtained from~$G^+$ by the natural switching operation which replaces the edges~$ab, xy$ with~$ax, by$, 
but despite the degree difference $\max\{\deg(a),\deg(b)\}<\min\{\deg(x),\deg(y)\}$ the resulting graph~$G^-$ is more likely than~$G^+$; 
see~\refApp{counterexample} for more~details.\label{fig:counter}}%
\end{figure}
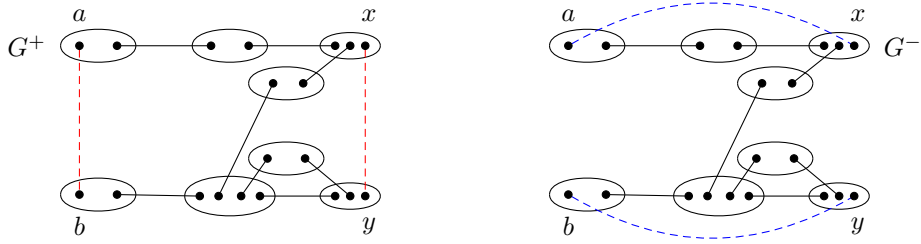
\refF{fig:counter} depicts an example 
where this heuristic fails: 
with a computer program one easily checks that in the relaxed $\bdn$-process (see~\refS{sec:relaxed}) 
we~have 
\[ \frac{\bP\bigpar{\Gpodn=G^+}}{\bP\bigpar{\Gpodn=G^-}}=\frac{Z(G^+)}{Z(G^-)}\approx 0.95652 <1.\]
For the reader worried that this counterexample might be an artifact of using the relaxed $\bdn$-process, 
we remark that if we interpret~$G^+$ and~$G^-$ as normal graphs 
(by contracting all points in an ellipse to one vertex), 
then with a computer program one easily checks that in the standard $\bdn$-process we also~have 
\[ \frac{\bP\bigpar{\Gpdn=G^+}}{\bP\bigpar{\Gpdn=G^-}}\approx 0.82164 < 1.\]

For switching arguments, the example from \refF{fig:counter} thus suggests that
very local consideration on the level of pairs of graphs do not suffice, 
i.e., that looking at larger sets of graphs at once seems to be necessary 
(in this paper we implement this by considering clusters, see~\refS{sec:cluster}).

\subsection{Proof of \refT{transferred}~\ref{transferred:uniform}: number of small edges~$\Xs$ in uniform model}\label{sec:uniformtransfer}
In this appendix we, for completeness, prove \refT{transferred}~\ref{transferred:uniform} using routine configuration model arguments. 
\begin{proof}[Proof of \refT{transferred}~\ref{transferred:uniform}]
Let $\Gcdn$ be the random graph obtained from the configuration model~\cite{BB1980,Wreg,FK} on the degree sequence~$\bdn$. As usual, $\Xs(G)$ denotes the number of small edges in~$G$. 
Since the maximum degree is~${\Delta=O(1)}$, it follows from well-known transfer results (see, e.g., \cite[Theorem~11.3]{FK}) that
\begin{equation}\label{eq:CM:transfer}
\bP\bigpar{|\Xs(\Gdn)- \mu|\ge \eps \mu}
 \le  O(1) 
\cdot \bP\bigpar{|\Xs(\Gcdn)- \mu|\ge \eps \mu} ,
\end{equation} 
so that we may henceforth focus on the number~$\Xs:=\Xs(\Gcdn)$ of small edges (which include loops) in the configuration model.
To this end, let $E_k$ denote the set of all possible small edges. Note that~${|E_k|= \binom{\stubs}{2}}$.
Writing $I_e$ is the indicator variable for the event that the edge~$e$ is in~$\Gcdn$, 
using~\eqref{keyassumption} and~$\eps \gg n^{-1/2}$ it follows that 
$\min\{\stubs,m\} = \Omega(n)$ as well~as
\begin{equation*}
\E \Xs = \sum_{e\in E_k} \E I_e
= \frac{|E_k|}{2m-1} 
= \bigpar{1+O\bigpar{n^{-1}}} \frac{\bigpar{\stubs}^2}{4m} = (1+o(\eps)) \mu. 
\end{equation*}
Gearing up towards an application of the bounded differences inequality for the configuration model 
(which follows from a simple modification of~\cite[Theorem~2.19]{Wreg} for regular~$\bdn$; see also~\cite[Section~1.1.4]{W16}), 
note that a switching operation can change the number~$\Xs$ of small edges by at most two. 
In view of~${\eps \gg n^{-1/2}}$, ${\Delta=\Theta(1)}$ and~${\mu=\Omega(n)}$, see~\eqref{keyassumption}, 
now a routine application of the bounded differences inequality to~$\Xs=\Xs(\Gcdn)$~yields 
\[ 
\bP\bigpar{|X_k-\mu|\ge \eps \mu} \le \bP\bigpar{|X_k-\bE X_k|\ge \eps \mu/2} \le 2\exp\left( \frac{-\Theta(\eps^2 \mu^2)}{n\Delta \cdot \Theta(1)}\right) 
\le \exp\Bigpar{-\Theta(\eps^2 n)} ,
\]
which together with~$\Xs=\Xs(\Gcdn)$ and inequality~\eqref{eq:CM:transfer} completes the proof of the desired estimate~\eqref{eq:transferred:uniform}. 
\end{proof}

\subsection{Transfer of \refT{main} to \refT{transferred}~\ref{transferred:process}}\label{sec:transfer}
In this appendix we show that our main technical result \refT{main} for the relaxed~\mbox{$\bdn$-process} 
transfers to the standard \mbox{$\bdn$-process}, i.e., implies our main result \refT{transferred}~\ref{transferred:process}.
This kind of transfer argument is conceptually standard, but the technical details are more involved than usual. 
Recall that, in both the final graph~$\Gpdn$ of the standard \mbox{$\bdn$-process} and the final graph~$\Gpodn$ of the relaxed~\mbox{$\bdn$-process}, 
we each time condition on having degree sequence~$\bdn$, i.e., we condition on the underlying process completing. 
The following lemma shows that the probability of completing is always at least a positive constant 
(stronger bounds can be deduced from~\cite{RW} and the arguments therein, but we do not need~this). 
\begin{lemma}[Completion probability]\label{completionprob}
For any integer~$\Delta \ge 1$ there are~$m_0,c>0$ such that, 
for any degree sequence $\bdn={\bigpar{d_1^{(n)},\dots, d_n^{(n)}}} \in {\{0, \ldots, \Delta\}^n}$ 
with even degree sum $\sum_{i \in [n]}d_i^{(n)}$ and at least~$m=m(\bdn)={\tfrac{1}{2}\sum_{i \in [n]}d_i^{(n)}} \ge m_0$ edges, 
the \mbox{(unconditional)} standard and relaxed $\bdn$-process each complete with probability at least~$c$.
\end{lemma}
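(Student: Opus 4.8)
The plan is to show that both processes can get stuck only near the very end of their run, when few vertices remain unsaturated, and that the resulting bounded‑size endgame is completed with at least constant probability. Write $U_i$ for the number of unsaturated vertices after $i$ steps; this quantity is non‑increasing and drops by at most~$2$ per step, since each new edge saturates at most two vertices. The relaxed process fails exactly when it terminates with $U=1$ (the lone remaining vertex has an unmatched point but no partner). The standard process gets stuck exactly when the unsaturated vertices form a clique in the current graph, in which case each of them has current degree $\ge U-1$ but at most~$\Delta$, forcing $U\le\Delta+1$. Fix a constant $L=L(\Delta)\ge\Delta+2$ to be specified, and choose $m_0=m_0(\Delta)$ so large that the number $N\ge 2m/\Delta$ of positive‑degree vertices exceeds~$L$ whenever $m\ge m_0$. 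Letting $\tau$ be the first step with $U_\tau\le L$, monotonicity gives $\tau\ge1$ and $U_\tau\ge L-1$, and by the two descriptions above neither process has failed before~$\tau$. It then suffices to prove that from the (random) state reached at time~$\tau$ — which has $U_\tau\le L$ unsaturated vertices, hence total residual degree at most $L\Delta$ and at most $R:=\lfloor L\Delta/2\rfloor$ remaining steps — each process completes with probability at least some constant $c(\Delta)>0$; averaging over the state at time~$\tau$ then yields the lemma.

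Call a state \emph{completable} if its as‑yet‑unmatched stubs can be matched into a graph with the prescribed residual degrees (a loopless multigraph for the relaxed process; a simple graph extending the current graph for the standard process). The two key facts are: (i)~the state at time~$\tau$ is completable; and (ii)~from any completable state with $U\ge2$ there is a move of probability at least $p_0:=1/\binom{L}{2}$ leading to a completable state, obtained by fixing a witnessing (multi)graph~$H$, taking any one of its as‑yet‑unadded edges $V_iV_j$, and adding exactly that edge in the next step — this is a legal move and leaves $H$ minus that edge as a witness. Since a completable state with $U=1$ is impossible, iterating~(ii) along a good path of at most~$R$ steps shows that the process completes from time~$\tau$ with probability at least $p_0^{R}=:c(\Delta)>0$. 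For the relaxed process fact~(i) is immediate once $U_\tau\ge\Delta+1$: then $\max_j r_j\le\Delta\le U_\tau-1\le\sum_{j'\ne j}r_{j'}$, so the residual degree sequence $(r_j)$ is realizable by a loopless multigraph (the elementary multigraph realization criterion).

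For the standard process the only new point is fact~(i): here completability means that the current partial simple graph extends to one with degree sequence~$\bdn$, equivalently that the residual degree sequence is realizable by a simple graph on the unsaturated vertices avoiding the edges already present among them. Each unsaturated vertex has current degree at most $\Delta-1$, so those forbidden edges form a graph of maximum degree at most $\Delta-1$; a routine degree‑sequence argument (a Havel--Hakimi / Erd\H{o}s--Gallai type realizability statement inside a nearly complete host graph, or Tutte's $f$‑factor theorem) then yields a constant $L_0(\Delta)$ such that every state of this form with at least $L_0(\Delta)$ unsaturated vertices is completable; choosing $L\ge L_0(\Delta)+1$ makes the state at time~$\tau$ completable, and step~(ii) goes through verbatim with ``edge of $H$'' read as ``edge of $H$ not already present''. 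The case $\Delta=1$ is trivial (the process simply builds a perfect matching on the even number of degree‑one vertices), and taking $c:=p_0^{R}$ with $L=L(\Delta)$ and $m_0=m_0(\Delta)$ as above completes the proof. The main obstacle is exactly this realizability input to the standard‑process case — showing that a partial simple graph with the above structure remains extendable as long as enough unsaturated vertices remain; this is precisely the complication the relaxed process was introduced to avoid, and the reason the transfer here is more delicate than usual, whereas the remaining ingredients (monotonicity of $U_i$, the two descriptions of getting stuck, and the constant‑probability endgame) are robust and identical for the two processes.
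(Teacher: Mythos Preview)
Your approach is correct and genuinely different from the paper's. You argue via a stopping time: wait until the number $U$ of unsaturated vertices first drops to at most a constant $L=L(\Delta)$, show the resulting state is completable, and then follow a fixed witness for the remaining $O(1)$ steps with probability at least $\bigl(1/\binom{L}{2}\bigr)^{O(\Delta^2)}$. The paper instead works directly with trajectories: it takes each stuck edge-sequence~$\hat\sigma$ and explicitly repairs it into a completing sequence~$g(\hat\sigma)$ by locating a bounded number of late edges $u_iv_i$ (chosen disjoint from and non-adjacent to the stuck set~$U$) and replacing each by the two edges $u_ia_{2i-1},\,v_ia_{2i}$ that absorb the leftover stubs; bounding $\bP(\hat\sigma)/\bP(g(\hat\sigma))$ and the preimage size $|g^{-1}(\cdot)|$ by $(6\Delta^3)^{6\Delta^3}$ then gives $\bP(\text{complete})\ge 1/(1+D^2)$.

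Your route is conceptually cleaner, especially for the relaxed process, where completability at time~$\tau$ reduces to the loopless-multigraph criterion $\max_j r_j\le\sum_{j'\ne j}r_{j'}$. For the standard process, however, you have shifted the work to the realizability input: realizing a degree sequence $(r_i)$ with $1\le r_i\le\Delta$ inside $K_U$ minus a forbidden graph of maximum degree at most $\Delta-1$, once $U\ge L_0(\Delta)$. This is true (Erd\H{o}s--Gallai gives a realization for $U$ moderately large, and a short switching argument removes any overlap with the forbidden edges), but it is not a one-liner; you should supply the short proof rather than label it routine. The paper's repair map sidesteps any such auxiliary realizability statement because the completing sequence is exhibited by construction, which also makes the argument fully self-contained and, incidentally, directly shows that~$\bdn$ is graphic for $m$ large (as the paper uses elsewhere).
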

As we shall see, this lemma (whose elementary proof we defer to the end of this appendix section) and~\refT{main} 
effectively reduce the proof of \refT{transferred}~\ref{transferred:process} to a comparison argument between the unconditional models. 
To this end we below write $\Hpdn{i}$ for the graph of the (unconditional) standard $\bdn$-process after~step~$i$, 
and also write $\Hpodn{i}$ for the (unconditional) relaxed $\bdn$-process after~step~$i$.
\begin{proof}[Proof of \refT{transferred}~\ref{transferred:process}]
Fix~$\alpha>0$ as in \refT{main}. 
The main idea is to compare the (unconditional) standard and relaxed $\bdn$-process after~$m_0 := {m-\floor{\min\{\alpha\mu/4,m/2\}}}$ steps,
exploiting that during the final ${m-m_0 \le \alpha\mu/4}$ steps 
the number of small edges can only change by at most most~$\alpha\mu/4$. 
Since the graph~$\Gpdn$ has the same distribution as~$\Hpdn{m}$ conditioned on completing,
by invoking \refL{completionprob} it follows that 
\begin{equation}\label{eq:transferred:anchor1}
\begin{split}
\bP\bigpar{\bigabs{\Xs\bigpar{\Gpdn}-\mu} \le \alpha \mu/4} 
& \le \frac{\bP\bigpar{\bigabs{\Xs\bigpar{\Hpdn{m}}-\mu} \le \alpha \mu/4}}{\bP\bigpar{\text{$\Hpdn{m}$ completes}}} \\
& \le O(1) \cdot \bP\bigpar{\bigabs{\Xs\bigpar{\Hpdn{m_0}}-\mu} \le \alpha \mu/2} .
\end{split}
\end{equation} 
Suppose that we condition on the graph~$\Hpodn{m_0}$ of the (unconditional) relaxed $\bdn$-process after~$m_0$ steps. 
Then the remaining~$m-m_0 = \Theta(\min\{\mu,n\}) \gg 1$ steps of the relaxed $\bdn$-process 
again correspond to a relaxed $\bdnp$-process with a modified degree sequence~$\bdnp$ (where the vertex degrees after~$m_0$ steps are subtracted off) with even degree sum, so by \refL{completionprob} the conditional probability of completing is at least~$c=\Omega(1)$. 
Summing over all graphs~$H$ satisfying $|\Xs(H)-\mu| \le \alpha \mu/2$ that can be attained by~$\Hpodn{m_0}$, 
it follows~that 
\begin{equation*}
\begin{split}
& \bP\bigpar{\bigabs{\Xs\bigpar{\Hpodn{m_0}}-\mu} \le \alpha \mu/2 \text{ and $\Hpodn{m}$ completes}}\\
& \qquad = \sum_{H}\bP\bigpar{\text{$\Hpodn{m}$ completes} \: \bigl| \: \Hpodn{m_0} = H }\bP\bigpar{\Hpodn{m_0} = H}\\
& \qquad \ge c \cdot \sum_{H}\bP\bigpar{\Hpodn{m_0} = H} = c \cdot \bP\bigpar{\bigabs{\Xs\bigpar{\Hpodn{m_0}}-\mu} \le \alpha \mu/2}.
\end{split}
\end{equation*} 
Recall that during the final ${m-m_0 \le \alpha\mu/4}$ steps the number of small edges can only change by at most most~$\alpha\mu/4$. 
Since the graph~$\Hpodn{m}$ conditioned on completing has the same distribution as~$\Gpodn$, 
by invoking~\refT{main} 
it follows that 
\begin{equation}\label{eq:transferred:anchor2}
\begin{split}
\bP\bigpar{|\Xs\bigpar{\Hpodn{m_0}}-\mu| \le \alpha \mu/2}
& \le \frac{\bP\bigpar{\bigabs{\Xs\bigpar{\Hpodn{m_0}}-\mu} \le \alpha \mu/2 \text{ and $\Hpodn{m}$ completes}}}{c}\\
& \le \frac{\bP\bigpar{\bigabs{\Xs\bigpar{\Hpodn{m}}-\mu} \le \alpha \mu \: \bigl| \: \text{$\Hpodn{m}$ completes}}}{c}\\
& \le O(1) \cdot \bP\bigpar{\bigabs{\Xs\bigpar{\Gpodn}-\mu} \le \alpha \mu} \le e^{-\Theta(n)} .
\end{split}
\end{equation}

In view of~\eqref{eq:transferred:anchor1}--\eqref{eq:transferred:anchor2} 
it remains to compare the probabilities in both unconditional processes after~$m_0$ steps, 
for which we shall use a standard step-by-step comparison argument (similar to~\cite{CRWhg,RWapbsr,JSdeg,WW}). 
To this end, for mathematical convenience we henceforth~write 
\begin{equation}\label{eq:projected}
\Hppdn{i} := \pi\bigpar{\Hpodn{i}}
\end{equation}
for the multigraph representation of the relaxed $\bdn$-process after~step~$i$, 
where the natural mapping~$\pi$ is defined as in \refS{sec:configurationgraph} (it simply contracts corresponding points of the relaxed $\bdn$-process to vertices). 
We now fix a graph sequence $(G_i)_{0 \le i \le m_0}$ satisfying $|\Xs(G_{m_0})-\mu| \le \alpha \mu/2$ 
that can be attained by~$(\Hpdn{i})_{0 \le i \le m_0}$. 
Comparing the probability with which the next edge is added in each process, for~$0 \le i < m_0$ it follows~that 
\begin{equation}\label{eq:transferred:step}
\bP\Bigpar{\Hpdn{i+1}=G_{i+1} \: \Big| \: \bigcap_{0\le j\le i}\bigcpar{\Hpdn{j}=G_j}} 
\: = \: 
\frac{\binom{\Gamma_i}{2} }{|\cQ_i|} \cdot \bP\Bigpar{\Hppdn{i+1}=G_{i+1} \: \Big| \: \bigcap_{0\le j\le i}\bigcpar{\Hppdn{j}=G_j}},
 \end{equation}
where~$\cQ_i=\binom{\Gamma_i}{2}\backslash E(G_i)$ denotes the set of edges that can be added to~$\Hpdn{i}$, 
and~$\Gamma_i$ denotes the number of unsaturated vertices in~$G_i$ (as before).  
Note that $|E(G_i)\cap \binom{\Gamma_i}{2}|\le \Gamma_i \cdot \Delta$. 
The argument of Observation~\ref{obs:unsaturated} gives $\Gamma_i\ge 2(m-i)/\Delta \ge 2(m-m_0)/\Delta \gg 1$. 
Using $1-x\ge e^{-2x}$ for~$x\in [0,1/2]$, it follows~that 
\[|\cQ_i|\ge \binom{\Gamma_i}{2}-\Gamma_i\Delta=\binom{\Gamma_i}{2} \cdot \left(1-\frac{2\Delta}{\Gamma_i-1}\right)
\ge \binom{\Gamma_i}{2} \cdot \exp\left(\frac{-4\Delta^2}{m-m_0}\right).
\]
Since initially~$\bP(\Hpdn{0}=G_{0})=1=\bP(\Hpodn{0}=G_{0})$, 
by multiplying~\eqref{eq:transferred:step} it follows~that 
\begin{equation}\label{eq:transferred:sequence}
\bP\Bigpar{\bigcap_{0\le i\le m_0}\bigcpar{\Hpdn{i}=G_i}} 
 \le 
\exp\left(\frac{4\Delta^2m_0}{m-m_0}\right) \cdot \bP\Bigpar{\bigcap_{0\le i\le m_0}\bigcpar{\Hppdn{i}=G_i}}.
\end{equation}
Recalling~$\Delta=O(1)$, by definition of~$m_0$ we have~$\Delta^2m_0/(m-m_0)=O(1)$. 
Summing~\eqref{eq:transferred:sequence} over all graph sequences $(G_i)_{0 \le i \le m_0}$ satisfying $|\Xs(G_{m_0})-\mu| \le \alpha \mu/2$ 
that can be attained by~$(\Hpdn{i})_{0 \le i \le m_0}$, it follows~that
\begin{equation}\label{eq:transferred:transfer}
\begin{split}
\bP\bigpar{\bigabs{\Xs(\Hpdn{m_0})-\mu} \le \alpha \mu/2} 
& \le O(1) \cdot \bP\bigpar{\bigabs{\Xs(\Hppdn{m_0})-\mu} \le \alpha \mu/2} \\
& \le   
O(1) \cdot \bP\bigpar{\bigabs{\Xs(\Hpodn{m_0})-\mu} \le \alpha \mu/2} ,
\end{split}
\end{equation}
where for the last step we used~$\Xs(\Hppdn{m_0})=\Xs(\Hpodn{m_0})$, i.e., that~$\Hpodn{m_0}$ and its multigraph representation~$\Hppdn{m_0}=\pi(\Hpodn{m_0})$ contain the same number of small edges.

Finally, combining~\eqref{eq:transferred:transfer} with~\eqref{eq:transferred:anchor1}--\eqref{eq:transferred:anchor2} 
completes the proof of inequality~\eqref{eq:transferred:process} with~$\beta := \alpha/4$. 
\end{proof}

We conclude by giving the deferred proof of \refL{completionprob}, which is based on counting arguments. 
\begin{proof}[Proof of \refL{completionprob}]	
We first consider the standard $\bdn$-process. 
When this process fails to complete, we are left with an edge-sequence $\hsigma$ with less than~$m$ edges, to which no more edges can be added without violating the degree constraint (not allowing multiedges). 
We denote the set of such `stuck' edge-sequences be~$S=S(\bdn)$, and write~$\bP(\sigma)$ for the probability that the standard $\bdn$-process produces the edge-sequence~$\sigma$. 

We claim that there is a function~$g$ such that the following holds for any $\hsigma\in S$:
the edge-sequence~$g(\hsigma)$ completes, is identical with~$\hsigma$ in the first $m_\Delta := m-3\Delta^3$ steps, 
and we also~have 
\begin{equation*}
 \bP(\hsigma) \le D \cdot \bP(g(\hsigma)) \quad \text{and}\quad \bigabs{g^{-1}(g(\hsigma))} \le D 
\qquad \text{for} \qquad  D:=(6\Delta^3)^{6\Delta^3} .
\end{equation*}
Writing~$\im(g)$ for the image of~$g$, using this claim it follows that $\sum_{\hsigma\in S} \bP(\hsigma)\le D^2\sum_{\sigma\in\im(g)} \bP(\sigma)$ and  
\begin{equation}\label{deltalowerbound}
\bP\bigpar{\text{$\bdn$-process completes}} \ge 
\frac{\sum_{\sigma\in\im(g)} \bP(\sigma)}{\sum_{\sigma\in\im(g)} \bP(\sigma)+\sum_{\hsigma\in S} \bP(\hsigma)}	 
\ge \frac{1}{1+D^2},
\end{equation}
which establishes the desired lower bound on the probability of completing, with~$c:=1/(1+D^2)$.

In the remainder we show the existence of such $g$ with the claimed properties 
(to ensure that all described steps are feasible, we shall always tacitly use that the number of edges~$m \ge m_0$ is sufficiently large, whenever necessary). 
Consider~$\hsigma\in S$ and the graph~$H$ containing all edges in~$\hsigma$. 
Note that the remaining set~$U$ of unsaturated vertices of $H$ are pairwise adjacent (otherwise we can add more edges). 
Since the maximum degree in~$\bdn$ is $\Delta$, it follows that there are at most~$|U| \le \Delta$ unsaturated vertices at the end. 
Hence the number~$\ell$ of missing edges is at most~$\ell \le |U| \cdot \Delta \le \Delta^2$. 
We shall next pick $\ell$ pairwise disjoint edges whose endpoints are not adjacent to any vertices of~$U$. 
To this end we initialize $F=N(U)$ as the set of forbidden vertices. 
At each step $i=1,\dots, \ell$, we then pick from~$\hsigma$ the latest edge~$u_iv_i$ whose endpoints are disjoint from~$F$, and add~$u_iv_i$ to~$F$. 
Let $(a_1,\dots, a_{2\ell})$ be an arbitrary but deterministic enumeration of the multiset of unsaturated vertices of $\hsigma$ where unsaturated vertex $v$ appears $d_v^{(n)}-\deg_H(v)$ times.
We now construct the edge-sequence $g(\hsigma)$ by replacing the edge $u_{i}v_{i}$ in~$\hsigma$ with the sequence of edges $u_{i}a_{2i-1}, v_{i}a_{2i}$.
By construction, the resulting new edge-sequence $g(\hsigma)$ completes.
Furthermore, since each edge $u_iv_i$ appears after step $m-(\ell\Delta + \ell\cdot 2\Delta) \ge m-3\Delta^3=m_{\Delta}$ in~$\hsigma$, 
the edge-sequence~$g(\hsigma)$ is identical with~$\hsigma$ in the first~$m_{\Delta}$ steps.
Since after~$m_{\Delta}$ steps there at most most~$2(m-m_{\Delta})=6\Delta^3$ unsaturated vertices left, 
it follows that 
\begin{equation}\label{smallpreimage}
\bigabs{g^{-1}(g(\hsigma))} \le \binom{6\Delta^3}{2}^{m-m_{\Delta}} \le (6\Delta^3)^{6\Delta^3} = D.
\end{equation}
We write~$\cQ_i(\sigma)$ for the set of edges that can be added in step~$i+1$ of the standard $\bdn$-process after adding the first~$i$ edges in $\sigma$. 
By construction, it follows that 
\begin{equation}\label{failurecontrol}
\frac{\bP(\hsigma)}{\bP(g(\hsigma))} = \frac{\prod_{m_{\Delta} \le i < m-\ell}|\cQ_i(\hsigma)|^{-1}}{\prod_{m_{\Delta} \le i < m}|\cQ_i(g(\hsigma))|^{-1}}
\le \prod_{m_{\Delta} \le i < m}|\cQ_i(g(\hsigma))| \le \binom{6\Delta^3}{2}^{m-m_{\Delta}} \le D, 
\end{equation}
which completes the proof of the lower bound~\eqref{deltalowerbound}, as discussed.

Finally, we sketch the similar argument for the relaxed $\bdn$-process, 
where it again suffices to show that any stuck edge-sequence can be changed into a completing edge-sequence without changing the choices made in the first $m_\Delta = m-O(1)$ steps.
Note that in the relaxed $\bdn$-process only one vertex remains unsaturated, so the number of missing edges is~$\ell \le \Delta$.
Then we can again find the last~$\ell$ edges in~$\hsigma$ not incident to the unsaturated vertex, and use a similar edge-replacement procedure to construct suitable~$g(\hsigma)$, 
which then establishes the desired constant lower bound on the probability of completing. 
\end{proof}

\subsection{Proof of Remark~\ref{rem:lowertail}: lower tail of~$\Xs$ in degree restricted process}\label{sec:lowertail}
In this appendix we establish Remark~\ref{rem:lowertail} 
by complementing \refT{main} with a lower tail result for the number $\Xs(\Gpodn)$ of small edges, 
for which a simpler variant of the proof of \refT{main} suffices.
In particular, we can avoid the notion of good clusters and only rely on the basic cluster switching result~\refL{switch},   
by exploiting that configuration-graphs~$G$ which at most~$\Xs(G) \le (1-\eps)\mu$ small edges are contained in considerably more lower than upper clusters 
(in contrast to the proof of \refT{main}, where the relevant configuration-graphs~$G$ with~$\Xs(G) \approx \mu$ are contained in approximately the same number of lower and upper~clusters). 
\begin{theorem}[Number of small edges: lower tail]\label{location}
Suppose that assumptions of \refT{transferred} hold.
Then, for any~$\eps \in (0,1)$, 
the relaxed random graph process~$\Gpodn$ satisfies 
${\bP\bigpar{\Xs\bigpar{\Gpodn} \le (1-\eps)\mu} \le e^{-\Omega(n)}}$.  
\end{theorem}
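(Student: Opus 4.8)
\textbf{Proof plan for Theorem~\ref{location}.}

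The plan is to mimic the double-counting argument used to prove \refT{main} in \refS{sec:doublecounting}, but in a simpler regime where we no longer need the refined estimate \refL{goodswitch} for good clusters. The key structural observation is that below the equilibrium point $\mu$ the counting coefficients $L_G$ and $U_G$ are no longer balanced: for a configuration-graph $G$ with $\ell = \Xs(G)$ small edges, equations~\eqref{eq:UG} and~\eqref{eq:LG} give $U_G = 4\ell(\ell + m - \stubs)$ and $L_G = (1+o(1))(\stubs - 2\ell)^2$, so when $\ell$ is bounded away from $\mu$ from below, the ratio $L_G/U_G$ is bounded away from $1$; more precisely, using $4m\mu = (\stubs)^2$ and \refO{technical}, one checks that for $\ell \le (1-\eps)\mu$ we have $L_G/U_G \ge 1 + \rho$ for some constant $\rho = \rho(\eps,\xi,\Delta) > 0$. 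This gap replaces the $1+\epsilon$ gain that \refL{goodswitch} provided in the proof of \refT{main}.

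The main steps would be as follows. First, fix a constant $\eps' \in (0,\eps)$ and work with the sets $\mathcal{M}_\ell := \{G : \text{configuration-graph with degree sequence } \bdn \text{ and } \Xs(G) \le \ell\}$, aiming to show a telescoping inequality $Z(\mathcal{M}_{\ell}) \le (1-\tau) Z(\mathcal{M}_{\ell + 4\Delta})$ for all $\ell$ in a suitable range roughly $[0, (1-\eps')\mu]$, with $\tau = \tau(\eps,\xi,\Delta) \in (0,1)$. Second, as in \eqref{firstgap}--\eqref{eq:ZL:ZU:relevance}, rewrite $Z(\mathcal{M}_\ell)/Z(\mathcal{M}_{\ell+4\Delta})$ in terms of clusters: let $\cL$ be the lower clusters meeting $\mathcal{M}_\ell$ and $\cU$ their switching partners, so $\sum_{G \in \mathcal{M}_\ell} L_G Z(G) \le Z(\cL)$ and $\sum_{G \in \mathcal{M}_{\ell+4\Delta}} U_G Z(G) \ge Z(\cU)$, using \refO{obs4D}. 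Third, invoke \refL{switch} to get $Z(\cL) \le Z(\cU)$, and then exploit the coefficient gap: since every $G$ feeding into $\cL$ has at least $(1+\rho)$ times as many lower clusters as upper clusters (by the simplified \refL{coefficients}-type computation above, which does not need $\ell$ near $\mu$), one gets $\sum_{G \in \mathcal{M}_\ell} L_G Z(G) \ge (1+\rho - O(\gamma)) \sum_{G \in \mathcal{M}_\ell} U_G Z(G)$, and combining with $Z(\cL) \le Z(\cU)$ and the inclusions yields $\sum_{G \in \mathcal{M}_\ell} U_G Z(G) \le (1-\tau)\sum_{G \in \mathcal{M}_{\ell+4\Delta}} U_G Z(G)$ for $\tau$ a suitable constant. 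Fourth, translate this back to $Z(\mathcal{M}_\ell) \le (1-\tau) Z(\mathcal{M}_{\ell+4\Delta})$ using that $L_G, U_G$ are each $\Theta(n^2)$ and comparable within a factor $1+O(\gamma)$ (here one only needs crude bounds on $L_G/L_H$ and $U_G/U_H$, not the sharp \refL{coefficients}). Iterating over $\Theta(n)$ steps, in view of $\mu = \Theta(n)$ and $\bP(\Gpodn \in \mathcal{M}_\ell) \propto Z(\mathcal{M}_\ell)$, gives $\bP(\Xs(\Gpodn) \le (1-\eps)\mu) \le (1-\tau)^{\Theta(n)} = e^{-\Omega(n)}$, as claimed.

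The main obstacle is making the coefficient-gap step uniform over the whole range $\ell \in [0, (1-\eps')\mu]$: near $\ell = 0$ the quantity $\ell + m - \stubs$ may be small or even negative-looking in the naive formula, so one must be careful that $U_G$ is genuinely positive and $\Theta(n)$ whenever there exist configuration-graphs with that few small edges, and that the large-edge count $m + \ell - \stubs$ stays $\Omega(n)$; here \refO{technical} (which gives $\mu + m - \stubs \ge \nu\max\{m,\mu\}$) is exactly what keeps $m - \stubs$ comfortably positive, so $U_G = \Theta(n^2)$ throughout. A secondary technical point is that the telescoping must stop at $\ell$ somewhat below $\mu$ (where the gap $\rho$ degrades to $0$), but since we only need $\bP(\Xs(\Gpodn) \le (1-\eps)\mu)$ we can afford to telescope only from $(1-\eps)\mu$ down, and the intervening $\Theta(n)$ factors of $(1-\tau)$ already deliver the $e^{-\Omega(n)}$ bound — so no interaction with the delicate near-equilibrium analysis of \refT{main} is needed.
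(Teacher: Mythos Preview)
Your overall strategy matches the paper's proof in \refApp{sec:lowertail}: drop \refL{goodswitch} and good clusters entirely, rely only on \refL{switch}, and let the coefficient imbalance $L_G > (1+\rho)U_G$ for $\Xs(G)$ below $(1-\eps')\mu$ supply the multiplicative decay in a telescoping argument.

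There is, however, a real gap in your fourth step and in your resolution of the ``main obstacle''. You assert that \refO{technical} keeps $m - \stubs$ positive and hence $U_G = \Theta(n^2)$ throughout; this is false. \refO{technical} only gives $\mu + m - \stubs \ge \nu m$, and $m - \stubs = \tfrac{1}{2}\sum_{j>k}jn_j - \tfrac{1}{2}\sum_{j\le k}jn_j$ is negative whenever more than half the total degree sits in small vertices, which the hypotheses of \refT{transferred} certainly permit. In that regime there are $G\in\mathcal{M}_{(1-\eps)\mu}$ with $U_G=0$, so the weighted sum $\sum U_G Z(G)$ does not control $\sum Z(G)$ and your translation fails. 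The paper sidesteps this by never iterating on $\sum U_G Z(G)$: instead it records the \emph{uniform} bounds $L_G \ge (\stubs - 2\mu)^2$ and $U_G \le (1-\tau)(\stubs-2\mu)^2$, valid for every $G$ with $\Xs(G)\le (1-2\eps/3)\mu$ (the upper bound holds trivially when $U_G=0$, and otherwise via the identity $(\stubs-2\mu)^2 = 4\mu(\mu+m-\stubs)$ together with monotonicity of $\ell\mapsto \ell(\ell+m-\stubs)$ on $[0,\mu]$). Sandwiching these around $\sum L_G Z(G)\le Z(\cL)\le Z(\cU)\le \sum U_G Z(G)$ gives directly $\sum_{\Xs(G)\le \ell}Z(G)\le(1-\tau)\sum_{\Xs(G)\le \ell+4\Delta}Z(G)$, with no translation step needed. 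Your route is easily repaired this way, or alternatively by iterating on $\sum L_G Z(G)$, since $L_G$ genuinely is $\Theta(n^2)$ throughout the range.
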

\begin{proof}
We proceed similarly to \refT{main}, though some details are simpler here. 
We henceforth write~$\Xs(G)$ for the number of small edges in~$G$, and assume that~${\ell \le (1-\eps/2)\mu}$. 
Inspired by \refS{sec:doublecounting}, let~$\cL$ denote the set of all lower clusters~$C^-$ that contain some configuration-graph~$G$ with ${\Xs(G) \le \ell}$ small edges, 
and let~$\cU$ contain all upper clusters~$C^+$ that are switching-partners of the lower clusters~$C^-$ in~$\cL$.
For any~$G$ with ${\Xs(G) \le \ell}$, note that inequality~\eqref{eq:LG} remains valid, which in turn implies~${L_G \ge 1}$ and thus ${G \in \bigcup_{C^- \in \cL}C^-}$. 
The proof of Observation~\ref{obs4D} shows that any ${G \in \bigcup_{C^+ \in \cU}C^+}$ has ${\Xs(G) \le \ell+4\Delta}$ small edges. 
Proceeding similarly to~\eqref{bound:ZLZU} and~\eqref{eq:ZL:ZU:relevance}, by invoking \refL{switch} to all switching-partners it follows~that 
\begin{equation*}
\sum_{G:\Xs(G)\le \ell} L_G Z(G) \le Z(\cL) \le Z(\cU) \le \sum_{G:\Xs(G)\le \ell+4\Delta} U_G Z(G).
\end{equation*}
Since $\eps \mu =\Theta(n)$, we have~$\ell+4\Delta \le (1-2\eps/3)\mu$, say. 
For any configuration-graph~$G$ with~$\Xs(G) \le \ell+4\Delta$ small edges, 
by carefully inspecting equations~\eqref{eq:UG}--\eqref{LUcoefficient} for~$U_G$ and~$L_G$ it is routine to see 
(by combining that~\eqref{LUcoefficient} implies~$L_G/U_G=1+o(1)$ for~$\ell=\mu$, 
with the observation that the estimates~\eqref{eq:UG} for~$U_G$ and~\eqref{eq:LG} for~$L_G$ are increasing and decreasing in~$\ell$, respectively)  
that there is~$\tau=\tau(\eps) \in (0,1)$ such that 
\begin{equation*}
L_G \ge (1-\tau) \bigpar{{\textstyle\stubs}-2\mu}^2 \quad \text{ and } \quad U_G \le (1-\tau)^2 \bigpar{{\textstyle\stubs}-2\mu}^2 .
\end{equation*}
Putting things together, for any~${\ell \le (1-\eps/2)\mu}$ it follows that
\[
\sum_{G:\Xs(G)\le \ell} Z(G)  \le (1-\tau) \cdot \sum_{G:\Xs(G)\le \ell+4\Delta} Z(G) .
\]
By iterating this inequality, it then follows similarly to equation~\eqref{eq:main:goal:cor} that 
\begin{align*}
\bP\bigpar{\Xs\bigpar{\Gpodn} \le (1-\eps)\mu}  &\le  
\frac{\sum_{G:\Xs(G)\le (1-\eps)\mu} Z(G)}{ \sum_{G:\Xs(G)\le (1-\eps)\mu + 4\Delta \cdot \floor{\eps \mu/(8\Delta)}} Z(G)} \: \le \: (1-\tau)^{\floor{\eps \mu/(8\Delta)}}  \le e^{-\Theta(n)},
\end{align*}
completing the proof. 
\end{proof}
Finally, the lower tail result \refT{location} for~$\Gpodn$ of course 
also transfers to the standard \mbox{$\bdn$-process}~$\Gpdn$ 
(the comparison arguments from~\refS{sec:transfer} go through mutatis~mutandis).

\end{appendix}
\end{document}